\newcommand{\real}{\mathbb{R}}  
\def \comp{\mathbb{C}} 
\def \eqn {\begin{equation}}
\def \eeqn {\end{equation}}
\def \pa {\partial}
\def \hardy {\mathbb{H}} 
\def \e {\pmb e}
\def \eb {\overline{\pmb e}}
\newtheorem{thm}{Theorem}[section]
\newtheorem{lem}[thm]{Lemma}
\begin{document}

\title{Jost Solutions and the Direct Scattering Problem of the Benjamin--Ono Equation}

\maketitle

\author{Yilun Wu}

\begin{abstract}
In this paper, we present a rigorous study of the direct scattering problem that arises from the complete integrability of the Benjamin--Ono (BO) equation. In particular, we establish existence, uniqueness, and asymptotic properties of the Jost solutions to the scattering operator in the Fokas--Ablowitz inverse scattering transform (IST). Formulas relating different scattering coefficients are proven, together with their asymptotic behavior with respect to the spectral parameter. This work is an initial step toward the construction of general solutions to the BO equation by IST.
\end{abstract}

\begin{keywords}
Benjamin--Ono equation, completely integrable, inverse scattering transform, Jost solutions, scattering data.
\end{keywords}

\begin{AMS}
Partial differential equations, 35A22, 35P25, 35Q53
\end{AMS}

\section{Introduction}

The Benjamin--Ono (BO) equation may be written as
\eqn\label{eq: BO}
u_t+2uu_x-Hu_{xx}=0.
\eeqn
Here we consider $u=u(x,t)$ a real-valued function of space and time, both one-dimensional, and $H$ is the Hilbert transform defined by
\eqn
Hf(x) = \text{P.V.}\frac1\pi \int_{-\infty}^{\infty}\frac{f(y)}{x-y}~dy.
\eeqn
Formulated by Benjamin \cite{benjamin1967internal} and Ono \cite{ono1975algebraic}, the BO equation \cref{eq: BO} is used to model long internal gravity waves in a two-layer fluid. Typical setup of the models requires the wave amplitudes to be much smaller than the depth of the upper layer, which in turn is small compared with the wavelengths, while the lower layer has infinite depth. See Davis and Acrivos \cite{davis1967solitary}, Choi and Camassa \cite{choi1999fully} and Xu \cite{xu2010asymptotic} for more details on the derivation of \cref{eq: BO}. One can also observe (see \cite{ablowitz1991solitons}) that the BO equation \cref{eq: BO} can be formally obtained from the Intermediate Long Wave (ILW) equation by passing to the deep water limit, whereas the shallow water limit of the ILW equation gives the Korteweg--de Vries (KdV) equation. 

The BO equation \cref{eq: BO} is known to be well-posed for initial data in a Sobolev space $H^s(\real)$. Local and global well-posedness of \cref{eq: BO} were obtained by Saut \cite{saut1979sur}, I{\'o}rio \cite{jose1986cauchy}, Ponce \cite{ponce1991global}, Koch and Tzvetkov \cite{koch2003local}, Kenig and Koenig \cite{kenig2003local} and Tao \cite{tao2004global}. The best known result in \cite{tao2004global} establishes global well-posedness in $H^s(\real)$ for $s\ge 1$.

The BO equation \cref{eq: BO} was also found to be completely integrable. The Lax pair of \cref{eq: BO} was discovered by Nakamura \cite{nakamura1979direct} and Bock and Kruskal \cite{bock1979two}. An equivalent but formally different Lax pair was presented in Wu \cite{wu2016simplicity}. Fokas and Ablowitz \cite{fokas1983inverse} formulated the direct and inverse scattering problems for \cref{eq: BO} and obtained soliton solutions. See also Kaup and Matsuno \cite{kaup1998inverse} and Xu \cite{xu2010asymptotic}. As is the case for many other completely integrable equations, one expects to be able to construct solutions to the Cauchy problem of the BO equation using the Fokas--Ablowitz inverse scattering transform (IST). Even though the BO equation is known to be well-posed in $H^s(\real)$, a solution by IST makes full use of the integrability structure of the equation, and will provide key tools and insights for stability and asymptotic analysis. This plan was carried out by Coifman and Wickerhauser \cite{coifman1990scattering} for sufficiently small initial data. It turns out that the Fokas--Ablowitz IST does not behave well enough to be solved by iteration (contraction mapping principle) even under a small potential assumption, so Coifman and Wickerhauser actually used a more complicated regularized IST and solved it by iteration. Up to the present time, a rigorous analysis of the Fokas--Ablowitz or related IST without a small potential assumption is still lacking, and as a result, no rigorous IST solution to the large data Cauchy problem of the BO equation has been proven.

As a first step toward this goal, the author \cite{wu2016simplicity} studied the $L_u$ operator in the Lax pair of the BO equation, and proved that its discrete spectrum is finite and simple. These are some key spectral assumptions made by Fokas and Ablowitz in their definition of the scattering data of the IST. A few other useful properties about the eigenfunctions were also established. 

In this paper, we will examine the full spectrum of the $L_u$ operator and provide a complete study of the direct scattering problem in the Fokas--Ablowitz IST. We will also investigate the asymptotic and regularity properties of the scattering data thus constructed. Such investigations may provide directions to the correct setup and future study of the inverse problem. The paper is organized as follows. We present the essential ingredients of the Fokas--Ablowitz IST in \Cref{sec: FA IST}. It will be evident that the central objects of study for the direct scattering problem are certain eigenfunctions of the $L_u$ operator in the Lax pair. These are the so-called Jost solutions (or Jost functions). In \Cref{sec: existence}, we prove the existence and uniqueness of these Jost solutions. This will provide basis for the construction of the scattering data. As we will see in \Cref{sec: existence}, what we need to solve are certain Fredholm integral equations, and the main difficulty is to prove a vanishing lemma for the corresponding integral operator. In \Cref{sec: scattering data}, we construct the scattering coefficients in the Fokas-Ablowitz IST from the Jost solutions, and prove certain important relations between them that are known only on the formal level in the literature. In \Cref{sec: k 0 limit}, we prove asymptotic formulas for the Jost solutions and scattering coefficients as the spectral parameter $k$ approaches $0$. These very useful asymptotic formulas obtained formally in \cite{fokas1983inverse} and \cite{kaup1998inverse} help clarify the global behavior of the scattering coefficients, and may provide insight into the study of the inverse scattering problem. The key to proving these formulas is to solve a regularized Fredholm integral equation at $k=0$, and the crucial difficulty is again to prove a vanishing lemma for a regularized integral operator. In \Cref{sec: k infty limit}, we prove asymptotic formulas as the spectral parameter $k$ approaches infinity. Finally, we discuss the time evolution of the scattering data in \Cref{sec: time evo}. This point is worth discussing particularly because the operator that is used to define the Jost solutions is actually slightly different than the $L_u$ operator in the Lax pair.

We now set up standard spaces and notations used throughout the paper. The following convention is employed for the Fourier transform and inverse Fourier transform:
\begin{align}
F(f)(\xi)&=\hat{f} (\xi) = \int_{\mathbb{R}}e^{-i\xi x}f(x)~dx,\\
F^{-1}(f)(x) &= \check{f}(x) = \frac{1}{2\pi}\int_{\mathbb{R}}e^{ix\xi }f(\xi)~d\xi,
\end{align}
with their usual extension to tempered distributions.
The Cauchy projections $C_{\pm}$ are defined in terms of the Hilbert transform as
\eqn
C_{\pm}f = \frac{\varphi \pm iHf}{2}.
\eeqn
In other words, $\widehat{C_{\pm}f}= \chi_{\mathbb{R}_{\pm}}\hat{f}$. We denote the $L^p$ Hardy space of the upper half plane by $\hardy^{p,+}$. More specifically, $f(x)\in \hardy^{p,+}$ for $1<p\le\infty$ if it is the $L^p$ (and almost everywhere) boundary value of an analytic function $F(x+iy)$ for $z=x+iy$ in the upper half plane $\{y>0\}$, such that $\sup_{y>0}\|F(\cdot + iy)\|_p<\infty$. We denote $\hardy^{2,+}$ also by $\hardy^+$. Observe that $C_+(L^2(\real))=\hardy^+$. We fix the notation for weighted $L^p$ spaces and weighted Sobolev spaces as follows. Let $w(x)=1+|x|$ be the weight function. We define for $1\le p\le \infty$ and $s\in \real$
\eqn
L^p_s(\real)=\{f~|~w^{s}f\in L^p(\real)\},
\eeqn
and
\eqn
H^s_s(\real)=\{f~|~ f\in L^2_s, \text{ and }\hat f\in L^2_s\},
\eeqn
with norms $\|f\|_{L^p_s(\real)}=\|w^sf\|_p$ and $\|f\|_{H^s_s(\real)}=\|w^s f\|_2+\|w^s\hat f\|_2$. We denote the $L^p(\real)$ norm by $\|\cdot\|_p$. When doing estimates, we use $C$ to mean a generic constant, whose value may be enlarged from step to step.

\section{The Fokas--Ablowitz inverse scattering transform}\label{sec: FA IST}

Throughout this section, we assume $u(x,t)$ is sufficiently smooth with sufficiently rapid decay in $x$ for each $t$, and present the Fokas-Ablowitz IST formulated in \cite{fokas1983inverse}. Since the current paper provides rigorous analysis of the direct scattering problem, we will freely quote results in the later sections when describing the direct problem, and take note that the inverse problem calls for more analysis in future works. Since time is frozen when performing the IST, we drop the $t$ dependence of $u$ in the discussion.

We start by recalling the Lax pair of the BO equation \cref{eq: BO} presented in \cite{wu2016simplicity}. There we see that when $u$ is real, as is the case considered in this paper, we only need to take the Lax pair to be operators defined on $\hardy^+$:
\begin{align}
L_u\varphi &= \frac{1}{i}\varphi_x -C_+(uC_+\varphi), \label{def: Lu 0}\\
B_u\varphi &= \frac{1}{i}\varphi_{xx} +2[(C_+u_x)(C_+\varphi)-C_+((uC_+\varphi)_x)]. \label{def: Bu 0}
\end{align} 
Since $C_+$ acts as the identity on $\hardy^+$, we simplify the Lax pair further by dropping the $C_+$ in $C_+\varphi$ and write
\begin{align}
L_u\varphi &= \frac{1}{i}\varphi_x -C_+(u\varphi), \label{def: Lu}\\
B_u\varphi &= \frac{1}{i}\varphi_{xx} +2[(C_+u_x)\varphi-C_+((u\varphi)_x)]. \label{def: Bu}
\end{align} 
Notice that in \cref{def: Lu} and \cref{def: Bu}, $\varphi$ may be allowed to have moderate growth at infinity. For instance, $\varphi$ could be a function in a weighted $L^p$ space. On the other hand, the equivalence of the BO equation with the Lax equation does cling to the particular form \cref{def: Lu 0} and \cref{def: Bu 0}. By dropping $C_+$ from the equations, we run a potential risk of destroying the equivalence of the BO equation with the Lax equation, when $\varphi$ is not a function in $\hardy^+$. We will address this issue in \Cref{sec: time evo}, since its effect only comes into play when time evolution is concerned.

According to \cite{wu2016simplicity}, $L_u$ given in \cref{def: Lu}, regarded as an operator on $\hardy^+$, is self-adjoint with finitely many negative simple eigenvalues $\lambda_j$, $j=1,\dots,N$, and $[0,\infty)$ as the essential spectrum. We denote the resolvent set of $L_u$ by $\rho(L_u)=\comp\setminus \{\lambda_1,\dots,\lambda_N\}\setminus [0,\infty)$. By \Cref{lem: equiv int diff} and \Cref{thm: existence of Jost soln}, for each $k\in \rho(L_u)$, there exists a unique Jost solution $m_1(x,k)$ in $L^{\infty}(\real)$ to the following equation
\eqn
L_u m_1 = k(m_1-1)
\eeqn
such that $m_1(x,k)-1\to 0$ as $x\to \pm \infty$. $m_1(x,k)$ depends analytically on $k$. Furthermore, as $k$ approaches a positive real $\lambda$ from above or from below, $m_1(x,k)$ has limits $m_1(x,\lambda\pm 0i)\in L^{\infty}(\real)$. We abbreviate $m_1(x,k)$ as $m_1(k)$ when convenient. By Proposition 2.1 and Corollary 2.2 in \cite{wu2016simplicity}, for each negative simple eigenvalue $\lambda_j$, and normalized eigenfunction $\phi_j$, there exists a number $\gamma_j$, such that the Laurent expansion of $m_1(k)$ around $\lambda_j$ is
\eqn\label{eq: Laurent}
m_1(k) = -\frac{i}{k-\lambda_j}\phi_j + (x+\gamma_j)\phi_j+(k-\lambda_j)h(k,\lambda_j),
\eeqn
where $h(k,\lambda_j)$ is analytic in $k$ around $\lambda_j$. $\gamma_j$ is called the phase constant in the literature.

The scattering data of the Fokas-Ablowitz IST consist of the eigenvalues $\{\lambda_j\}_{j=1}^N$, the phase constants $\{\gamma_j\}_{j=1}^N$ and the scattering coefficient 
\eqn
\beta(\lambda) = i\int_\real u(x)m_1(x,\lambda+0i)e^{-i\lambda x}~dx
\eeqn
for $\lambda>0$.

The discussion above provides a minimal description of the direct scattering problem. However, to understand the connection to the inverse problem, we need to express the jump of $m_1(k)$ on the positive real line. To accomplish that we introduce another Jost function $m_e(x,\lambda- 0i)\in L^{\infty}(\real)$ which for $\lambda>0$ solves uniquely
\eqn
L_um_e=\lambda m_e
\eeqn
with asymptotic condition $m_e(x,\lambda-0i)\to 0$ as $x\to \infty$. The notation $\lambda-0i$ in $m_e(x,\lambda-0i)$ is natural in the integral equation it satisfies. The existence of $m_e$ is established in \Cref{thm: existence of Jost soln}. By \Cref{lem: def scattering coeff} and \Cref{lem: rel scattering coeff}, 
\eqn
m_1(\lambda+0i)-m_1(\lambda-0i)=\beta(\lambda)m_e(\lambda-0i),
\eeqn
and
\eqn\label{eq: deriv jump}
\e(\lambda)\partial_\lambda(\eb(\lambda) m_e(\lambda-0i))= \frac{\overline{\beta(\lambda)}}{2\pi i\lambda}m_1(\lambda-0i),
\eeqn
where $\e(\lambda)=\e(x,\lambda) = e^{i\lambda x}$. By \Cref{thm: lim k 0},
\eqn
\lim_{\lambda\searrow 0}m_1(\lambda-0i) = \lim_{\lambda\searrow 0}m_e(\lambda-0i).
\eeqn
Denoting the limit above by $m_1(0-0i)=m_e(0-0i)$, we obtain from \cref{eq: deriv jump}
\eqn
\eb(\lambda) m_e(\lambda-0i) = m_1(0-0i)+\int_0^{\lambda}\frac{\overline{\e(\mu)\beta(\mu)}}{2\pi i\mu}m_1(\mu-0i)~d\mu.
\eeqn
By the study performed in \Cref{sec: k 0 limit}, for a large class of potential $u$ called generic potentials, $m_1(0-0i)$ is actually equal to $0$.
Finally, by \Cref{thm: lim infty schwartz}, 
\eqn
C_+u = \lim_{k\to \infty}k(1-m_1(k)),
\eeqn
where the limit holds in $L^{\infty}(\real)$ in $x$.

Summarizing the above discussion, it is natural to cast the inverse scattering problem as follows. Given the negative eigenvalues $\{\lambda_j\}_{j=1}^N$, the phase constants $\{\gamma_j\}_{j=1}^N$ and suitable scattering coefficient $\beta(\lambda)$ for $\lambda>0$, find an analytic mapping $k\mapsto m_1(k)$ from the resolvent set $\comp\setminus \{\lambda_1,\dots,\lambda_N\}\setminus [0,\infty)$ to a suitable function space in $x$ such that
\begin{enumerate}[(a)]
\item Around every $\lambda_j$, the Laurent expansion of $m_1(k)$ has the form \cref{eq: Laurent} for some fixed function $\phi_j$ and mapping $h(k,\lambda_j)$ analytic for $k$ close to $\lambda_j$.
\item $m_1(k)$ has limits $m_1(\lambda\pm 0i)$ in suitable function spaces as $k$ approaches the positive real line from above and from below, such that
\begin{align}
&~m_1(\lambda+0i)-m_1(\lambda-0i) \notag\\
=&~ \beta(\lambda)\left(\e(\lambda) m_1(0-0i)+\int_0^{\lambda}\frac{\e(\lambda-\mu)\overline{\beta(\mu)}}{2\pi i\mu}m_1(\mu-0i)~d\mu\right). \label{eq: nonlocal jump}
\end{align}
\item $m_1(k)\to 1$ as $k\to \infty$.
\end{enumerate}
Once $m_1(x,k)$ is obtained by solving the inverse problem, $u(x)$ may be recovered by
\eqn
u= 2~\text{Re}\lim_{k\to \infty}k(1-m_1(k)).
\eeqn
This completes the formulation of the inverse scattering problem. 

The inverse problem is often called a nonlocal Riemann-Hilbert problem. \Cref{eq: nonlocal jump} is known as the nonlocal jump condition, in comparison with the usual jump condition appearing in a standard Riemann-Hilbert problem, where the integral in \cref{eq: nonlocal jump} is replaced by straightforward multiplication.

\section{Existence and uniqueness of Jost solutions}\label{sec: existence}

In this section, we solve certain modified eigenvalue equations for the operator $L_u = \frac1i\pa_x - C_+u$, with specified asymptotic conditions at $\pm \infty$. These are the Jost solutions that play a central role in the Fokas--Ablowitz IST. They encode properties of the spectrum of $L_u$, which, according to \cite{wu2016simplicity}, has the form $\{\lambda_1,\dots,\lambda_N\}\cup\{0\}\cup \real^+$, where $\real^+=(0,\infty)$.

In the following, two Jost functions $m_1(x,k)$ and $m_e(x,\lambda\pm 0i)$ will be considered. These are solutions to the following equations, with suitable asymptotic conditions at infinity:
\begin{align}
\frac1i \pa _x m_1 - C_+(um_1 )= k(m_1-1), \label{eq: m1 diff 0}
\end{align}
\begin{align}
\frac1i \pa _x m_e - C_+(um_e) = \lambda m_e.\label{eq: me diff 0}
\end{align}
Here $\lambda\pm 0i\in \real^+\pm 0i$, and 
\eqn
k\in \rho(L_u)\cup (\real^+\pm 0i)=(\mathbb{C}\setminus\{\lambda_1,\dots\lambda_N\}\setminus [0,\infty))\cup (\real^+\pm 0i), 
\eeqn
which is the resolvent set glued with two copies of the positive real line. Later on, we will see that $m_1(x,k)$ has limits as $k$ approaches the positive real line from above and below. 
The notation of $m_1(x,k)$ and $m_e(x,\lambda\pm 0i)$ is adapted to the asymptotic conditions at infinity, and may be abbreviated as $m_1(k)$, $m_e(\lambda\pm 0i)$, $m_e(\lambda\pm)$, or simply $m_1$ and $m_e$. In \cite{fokas1983inverse}, a different notation is used. We provide the translation of notation as follows: 
\begin{align}
M(x,\lambda) &= m_1(x,\lambda+0i),~ \overline{M}(x,\lambda)=m_e(x,\lambda+0i),\\
N(x,\lambda) &= m_e(x,\lambda-0i), ~\overline{N}(x,\lambda)=m_1(x,\lambda-0i).
\end{align}
The Jost functions can equivalently be described as solutions to certain Fredholm integral equations. To express these equations, we introduce the convolution kernels
\eqn\label{def: G}
G_k(x) = \frac{1}{2\pi}\int_0^{\infty}\frac{e^{ix\xi}}{\xi-k}~d\xi
\eeqn
for $k\in \mathbb{C}\setminus [0,\infty)$, and
\eqn\label{def: tilde G}
\widetilde{G}_k(x) = \frac{1}{2\pi}\int_{-\infty}^0\frac{e^{ix\xi}}{\xi-k}~d\xi
\eeqn
for $k\in \mathbb{C}\setminus (-\infty,0]$.
We have
\begin{align}
G_{\lambda\pm i\epsilon}(x)&= \frac{1}{2\pi}\int_{-\infty}^{\infty}\frac{e^{ix\xi}}{\xi-(\lambda\pm i\epsilon)}~d\xi -\widetilde{G}_{\lambda\pm i\epsilon}(x)  \notag\\
&= \pm i e^{\mp\epsilon x}e^{i\lambda x}\chi_{\real^{\pm}}(x) - \widetilde{G}_{\lambda\pm i\epsilon}(x) \label{eq: G_lambda ep decomp}
\end{align}
with
\begin{equation}\label{eq: G_lambda decomp}
G_{\lambda\pm 0 i} (x)= \lim_{\epsilon\searrow 0}G_{\lambda \pm i\epsilon}(x)
= \pm i e^{i \lambda x}\chi_{\real^{\pm}}(x) -\widetilde{G}_{\lambda}(x) ,
\end{equation}
for $\lambda>0$.
The limit in \cref{eq: G_lambda decomp} holds in the following sense: the first term in \cref{eq: G_lambda ep decomp} converges pointwise, and the second term in \cref{eq: G_lambda ep decomp} converges in $L^{p'}$ for every $p'\in[2,\infty)$. To see the latter, observe that $\widetilde{G}_{\lambda\pm i\epsilon}$ is the inverse Fourier transform of $\frac{\chi_{\real^-}(\xi)}{\xi-(\lambda\pm i\epsilon)}$, which converges to $\frac{\chi_{\real^-}(\xi)}{\xi-\lambda}$ in every $L^p$ for $p\in(1, 2]$, assuming $\lambda>0$.

We are ready to describe the Fredholm integral equations satisfied by the Jost solutions.

\begin{lem}\label{lem: equiv int diff}
Let $p>1$ and $s>s_1>1-\frac1p$ 
be given, and let 
$u\in L^p_s(\real)$. Suppose $m_1(x,k), m_e(x,\lambda\pm 0i) \in L^{\infty}_{-(s-s_1)}(\real)$ for fixed $k\in (\comp\setminus[0,\infty))\cup(\real^+\pm 0i)$ and $\lambda\in \real^+$, then the following are equivalent:
\begin{enumerate}[(a)]
\item $m_1(x,k), m_e(x,\lambda\pm 0i)$ solve 
\eqn\label{eq: m1 diff}
\frac1i \pa _x m_1 - C_+(um_1 )= k(m_1-1),
\eeqn
\eqn\label{eq: me diff} 
\frac1i \pa _x m_e - C_+(um_e) = \lambda m_e,
\eeqn
together with the asymptotic conditions 
\eqn\label{eq: m1 limit}
m_1(x,k) - 1 \to 0 \begin{cases}\text{ as } |x|\to \infty\quad &\text{ if }k\in \mathbb{C}\setminus [0,\infty),\\ \text{ as } x \to \mp \infty &\text{ if }k=\lambda\pm 0i\in \real^+\pm 0i.\end{cases}
\eeqn
\eqn\label{eq: me limit}
m_e(x,\lambda\pm 0i) -e^{i\lambda x} \to 0 \quad \text{ as }x\to \mp\infty.
\eeqn
The above asymptotic conditions should be read with either the upper sign or the lower sign.
\item $m_1(x,k), m_e(x,\lambda\pm 0i)$ solve the following integral equations:
\eqn\label{eq: m1 int}
m_1(x,k) = 1+G_k*(um_1(\cdot,k))(x),
\eeqn
\eqn\label{eq: me int}
m_e(x,\lambda\pm 0i) = \pmb{e}(x,\lambda) + G_{\lambda\pm 0i}*(um_e(\cdot, \lambda\pm0i))(x),
\eeqn
where $\pmb{e}(x,\lambda)$ denotes $e^{i\lambda x}$. 
\end{enumerate}
In addition, if either (a) or (b) holds, we have the stronger bounds 
\eqn
m_1(x,k) - 1\in  L^{\infty}(\real)\cap \hardy^{p,+}
\eeqn
for fixed $k\in \mathbb{C}\setminus [0,\infty)$, and
\eqn
m_1(x,\lambda\pm 0i), m_e(x,\lambda\pm 0i)\in  L^{\infty}(\real)
\eeqn
for fixed $\lambda\in \real^+$.
\end{lem}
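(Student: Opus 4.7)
The strategy is to establish the two directions of the equivalence by Fourier analysis of the kernel $G_k$, and then to bootstrap the resulting integral representation to obtain the improved regularity. As a preliminary, H\"older with the weight $w$ gives $um\in L^p_{s_1}(\real)$ (where $m$ denotes $m_1$ or $m_e$), and the assumption $s_1>1-\frac1p$ yields $L^p_{s_1}\hookrightarrow L^1$, so $um\in L^p\cap L^1$ throughout. On the Fourier side, $G_k$ is exactly the multiplier $\chi_{\real^{+}}(\xi)/(\xi-k)$, and this identity drives the whole argument. For (b) $\Rightarrow$ (a) I take the Fourier transform of \cref{eq: m1 int}: the operator $\frac1i\pa_x-k$ becomes multiplication by $\xi-k$, and $(\xi-k)\cdot\chi_{\real^{+}}(\xi)/(\xi-k)=\chi_{\real^{+}}(\xi)$ is the symbol of $C_+$, yielding \cref{eq: m1 diff} at once. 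The asymptotic conditions \cref{eq: m1 limit}--\cref{eq: me limit} are equivalent to $G_k\ast(um)\to 0$ at the appropriate infinity; when $\text{Im}\,k\neq 0$ I use $G_k = ie^{ikx}\chi_{\real^{+}}-\widetilde G_k$ (upper half plane; the lower half plane case and \cref{eq: G_lambda decomp} are analogous), and when $k\in\real^-$ I estimate $G_k$ directly by deforming the contour in \cref{def: G} onto the imaginary axis, obtaining $|G_k(x)|\lesssim |x|^{-1}$. In every case the pointwise vanishing follows from a tail splitting: partition the convolution at $|y|=R$, controlling the bounded-$y$ part by $\sup_{|x-y|>|x|-R}|G_k(x-y)|\cdot\|um\|_1$ and the tail by $\|G_k\|_{p'}\|um\cdot\chi_{\{|y|>R\}}\|_p$.

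For (a) $\Rightarrow$ (b) I set $h:=m_1-1-G_k\ast(um_1)$ and find, using the same Fourier identity in reverse, that $(\frac1i\pa_x-k)h=0$ distributionally. Since $h\in L^\infty_{-(s-s_1)}$ is tempered, the equation $(\xi-k)\hat h(\xi)=0$ forces $\operatorname{supp}\hat h$ to lie in the real zero set of $\xi-k$. For $k\notin\real$ this set is empty, so $h\equiv 0$. For $k\in\real^-$ or $k=\lambda\pm 0i$, $h$ must be a finite sum $\sum_j c_j x^j e^{ikx}$ whose order is controlled by the growth exponent $s-s_1$, and the hypothesized decay of $m_1-1$ at the relevant infinity, combined with the decay of $G_k\ast(um_1)$ from the previous step, forces each $c_j=0$. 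The argument for $m_e$ is identical after subtracting the free wave $\e(\lambda)$ in place of $1$.

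For the regularity upgrade, integration by parts in \cref{def: G} shows $|G_k(x)|\lesssim |x|^{-1}$ at infinity and $|G_k(x)|\lesssim |\log|x||$ at the origin, placing $G_k\in L^q(\real)$ for every $q\in(1,\infty)$. Young's inequality then gives $\|m_1-1\|_\infty\leq\|G_k\|_{p'}\|um_1\|_p$ and $\|m_1-1\|_p\leq\|G_k\|_p\|um_1\|_1$, and the Fourier support $\operatorname{supp}\widehat{m_1-1}\subset[0,\infty)$ identifies $m_1-1$ as an element of $\hardy^{p,+}$ by the Paley--Wiener characterization of Hardy spaces in the upper half plane. The analogous $L^\infty$ bounds for $m_1(\lambda\pm 0i)$ and $m_e(\lambda\pm 0i)$ use the decomposition \cref{eq: G_lambda decomp}: the oscillatory piece $\pm ie^{i\lambda x}\chi_{\real^{\pm}}$ convolved with an $L^1$ function is trivially in $L^\infty$, and $\widetilde G_\lambda$ obeys the same $L^q$ estimates as $G_k$.

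The main technical obstacle I expect is the pointwise decay of $G_k\ast(um)$ at infinity in the real-$k$ regime, where $G_k$ falls off only like $|x|^{-1}$ and the dominant behavior is purely oscillatory. The tail splitting above handles this, but requires both the $L^p$ and $L^1$ integrability of $um$ supplied by the weighted hypothesis, and keeping both in play throughout the argument is the book-keeping to watch.
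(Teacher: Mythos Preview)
Your proposal is correct and covers all parts of the lemma, but it takes a different route from the paper in two places. For (a) $\Rightarrow$ (b) with $k=\lambda\pm 0i$, the paper divides the Fourier-side relation by $\xi-(\lambda\pm i\epsilon)$, inverts, and sends $\epsilon\searrow 0$; the residual term $F^{-1}\big(\tfrac{-i\epsilon}{\xi-(\lambda\pm i\epsilon)}\widehat{m_1-1}\big)$ is then shown to vanish via a dilation argument that consumes the one-sided asymptotic condition. You instead observe that the defect $h=m_1-1-G_k\ast(um_1)$ solves the homogeneous equation $(\tfrac1i\partial_x-k)h=0$ in $\mathcal{S}'$, hence is a polynomial multiple of $e^{ikx}$, and kill it directly with the decay at one end. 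Your argument is shorter and avoids the $\epsilon$-limit machinery; the paper's has the advantage of making the role of the boundary condition quantitatively explicit. For the $\hardy^{p,+}$ upgrade, the paper verifies the Marcinkiewicz hypothesis for the multiplier $\chi_{\real^+}(\xi)e^{-y\xi}/(\xi-k)$ to get uniform-in-$y$ $L^p$ bounds on the analytic extension, while you use Young's inequality (via the pointwise bounds $|G_k(x)|\lesssim\min(|\log|x||,|x|^{-1})$ placing $G_k\in L^p$) together with the Paley--Wiener characterization of $\hardy^{p,+}$ by Fourier support. Both are clean; yours is quicker once those pointwise bounds are established, the paper's is self-contained and does not lean on $um_1\in L^1$. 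One caution: when $k=\lambda\pm 0i$ your statement that $\widehat{G_k}=\chi_{\real^+}(\xi)/(\xi-k)$ must be read distributionally (a principal value plus a $\delta_\lambda$), so the Fourier verification of (b) $\Rightarrow$ (a) in that case is safest done by differentiating the convolution in physical space using \cref{eq: G_lambda decomp}, which is exactly what the paper does.
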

\begin{proof}
First of all, we notice from the conditions on $u$, $m_1$, and $m_e$ that $$um_1,um_e\in L^p_{s_1}\subset L^1\cap L^p.$$ Since $L^q\subset L^1\cap L^p$ for every $1<q<p$, $f\in L^q$ for some $1<q\le 2$. The terms $C_+(um_1)$, $C_+(um_e)$ in \cref{eq: m1 diff} and \cref{eq: me diff} are well-defined as $C_+$ is bounded on $L^p$. To see that the convolution in \cref{eq: m1 int} and \cref{eq: me int} are well-defined and belong to $L^{\infty}$, we notice by \cref{eq: G_lambda decomp} that $G_k\in L^{p'}$ if $k\in \comp\setminus [0,\infty)$, and $G_{\lambda \pm 0i}\in L^{\infty}+ L^{p'}$ where $\frac1p+\frac1{p'}=1$. 

We now study $m(x,k)$ for $k\in \comp\setminus [0,\infty)$. In this case, we can actually show \cref{eq: m1 diff} is equivalent to \cref{eq: m1 int} without using the asymptotic condition \cref{eq: m1 limit}. To see this, we take the Fourier transform of \cref{eq: m1 diff} to get
\eqn
\xi \widehat{m_1}-\chi_{\real^+}\widehat{um_1} = k \widehat{m_1}-k\hat 1,
\eeqn
or 
\eqn\label{eq: hat m1 hardy}
\widehat{m_1} =  \hat 1 + \frac{\chi_{\real^+}}{\xi-k}\widehat{um_1}.
\eeqn
Now take the inverse Fourier transform to get \cref{eq: m1 int}. The convolution formula for inverse Fourier transform can be justified using the fact that $um_1\in L^q$ for some $1<q\le 2$. The above calculation can be reversed. Hence \cref{eq: m1 int} also implies \cref{eq: m1 diff}. To obtain the limiting condition \cref{eq: m1 limit} when $k\in \comp\setminus[0,\infty)$, we just observe that $\frac{\chi_{\real^+}}{\xi-k}\widehat{um_1}\in L^1$.
\Cref{eq: hat m1 hardy} also implies $m_1-1\in \hardy^{p,+}$. To see this, we apply the Marcinkiewicz multiplier theorem to the multiplier $\mu(\xi)=\frac{\chi_{\real^+}(\xi)e^{-y\xi}}{\xi-k}$ for every $y>0$. In fact
\begin{align}
\sup_{j\in\mathbb{Z}}\int_{2^j}^{2^{j+1}}|\mu'(\xi)|~d\xi &\le \sup_{j\in\mathbb{Z}}\int_{2^j}^{2^{j+1}}\left(ye^{-y\xi}\frac1{|\xi-k|}+\frac1{|\xi-k|^2}\right)~d\xi \notag\\
&\le C_k\left(1+\sup_{j\in\mathbb{Z}}\int_{2^j}^{2^{j+1}}ye^{-y\xi}\frac1{|\xi|+|k|}~d\xi\right) \notag\\
&\le C_k\left(1+\sup_{j\in\mathbb{Z}}ye^{-y2^j}\log\left(\frac{2^{j+1}+|k|}{2^j+|k|}\right)\right)\notag\\
&\le C_k\left(1+\sup_{j\in\mathbb{Z}}ye^{-y2^j}2^j\right)\notag\\
&\le C_k \left(1+\sup_{y\ge 0}ye^{-y}\right)\le C_k,\label{est: multiplier}
\end{align}
where $C_k$ is a generic constant depending only on $k$. Estimate \cref{est: multiplier} implies that the $L^p$ norm of 
\eqn
F(x+iy)=\frac1{2\pi}\int_0^{\infty}\frac{e^{i\xi(x+iy)}}{\xi-k}\widehat{um_1}(\xi)~d\xi
\eeqn
is uniformly bounded for $y>0$. On the other hand $F(x+iy)$ converges pointwise to $F(x+i0)=m_1(x,k)$ as $y\searrow 0$ by the dominated convergence theorem. Hence $m_1(x,k)-1\in \hardy^{p,+}$. 

We now work on $m_1(x,\lambda\pm 0i)$ and $m_e(x,\lambda\pm 0i)$. To simplify notation, we suppress the $x$ variable and $0i$, and only work on the case with the plus sign. The case with the minus sign can be treated similarly. 

We first prove the passage from \cref{eq: m1 diff} and \cref{eq: m1 limit} to \cref{eq: m1 int}. In fact, the Fourier transform of \cref{eq: m1 diff} gives
\eqn
\xi \widehat{m_1(\lambda+)} = \lambda \widehat{m_1(\lambda+)} - \lambda \widehat 1 +\chi_{\mathbb{R}^+}F(um_1(\lambda+)).
\eeqn
For every $\epsilon>0$, we divide by $\xi-(\lambda+i\epsilon)$ to get
\begin{align}\label{eq: epsilon fourier}
\widehat{m_1(\lambda+)} =&~ -\frac{i\epsilon}{\xi-(\lambda+i\epsilon)}F(m_1(\lambda+)-1)-\frac{\lambda+i\epsilon}{\xi-(\lambda+i\epsilon)}\widehat 1\notag\\
&\quad +\frac{1}{\xi-(\lambda+i\epsilon)}\chi_{\mathbb{R}^+}F(um_1(\lambda+)).
\end{align}
Since $\widehat 1$ is a multiple of $\delta$, we have
\begin{equation}
-\frac{\lambda+i\epsilon}{\xi-(\lambda+i\epsilon)}\widehat 1=\widehat 1.
\end{equation}
Now inverse Fourier transform \cref{eq: epsilon fourier} to get
\begin{equation}\label{eq: pre int}
m_1(\lambda+) = F^{-1}\left(-\frac{i\epsilon}{\xi-(\lambda+i\epsilon)}F(m_1(\lambda+)-1) \right)+1+G_{\lambda+i\epsilon}*(um_1(\lambda+)).
\end{equation}
By the decomposition \cref{eq: G_lambda decomp}, and the dominated convergence theorem,
\eqn\label{eq: justify 1}
\lim_{\epsilon \searrow 0}G_{\lambda+i\epsilon}*(um_1(\lambda+))=G_{\lambda+0i}*(um_1(\lambda+))
\eeqn
pointwise.
Since 
\begin{equation}
F^{-1}\left(-\frac{i\epsilon}{\xi-(\lambda+i\epsilon)}\right) = \epsilon\chi_{\mathbb{R}^+}(x)e^{i(\lambda+i\epsilon)x},
\end{equation}
we have
\begin{equation}\label{eq: fourier convolution}
F^{-1}\left(-\frac{i\epsilon}{\xi-(\lambda+i\epsilon)}F(m_1(\lambda+)-1) \right) = \epsilon\chi_{\mathbb{R}^+}(x)e^{i(\lambda+i\epsilon)x}*(m_1(\lambda+)-1).
\end{equation}
\Cref{eq: fourier convolution} can be justified as follows. First of all, it is true if $m_1(\lambda+)-1$ is replaced by a Schwartz class function. By the conditions on $m_1(\lambda+)$, it is obvious that $w^{-2-(s-s_1)}(m_1(\lambda+)-1)\in L^1$. Approximate $w^{-2-(s-s_1)}(m_1(\lambda+)-1)$ in $L^1$ by a sequence of Schwartz class functions $f_n$, and take the limit as $n\to \infty$ of
\eqn\label{eq: fourier conv 1}
F^{-1}\left(-\frac{i\epsilon}{\xi-(\lambda+i\epsilon)}\widehat{g_n} \right) = \epsilon\chi_{\mathbb{R}^+}(x)e^{i(\lambda+i\epsilon)x}*g_n(x),
\eeqn
where $g_n = w^{2+s-s_1}f_n$. The left hand side of \cref{eq: fourier conv 1} converges as tempered distributions to the left hand side of \eqref{eq: fourier convolution}. To study convergence of the right hand side, observe that 
\begin{align}
|\chi_{\mathbb{R}^+}(x)e^{i(\lambda+i\epsilon)x}*g_n| &= \left|\int_{-\infty}^x e^{-\epsilon(x-y)}e^{i\lambda(x-y)}g_n(y)~dy\right|\notag\\
&\le e^{-\epsilon x}(\sup_{y\le x}e^{\epsilon y}[w(y)]^{2+s-s_1}) \|w^{-2-(s-s_1)}g_n\|_{L^1}
\end{align}
It follows that the right hand side of \cref{eq: fourier conv 1} converges locally uniformly to the right hand side of \eqref{eq: fourier convolution}. Thus \eqref{eq: fourier convolution} holds. Now
\begin{align}
&~\epsilon\chi_{\mathbb{R}^+}(x)e^{i(\lambda+i\epsilon)x}*(m_1(\lambda+)-1) \notag \\
= &~\epsilon\int_0^{\infty}e^{i(\lambda+i\epsilon)y}(m_1(\lambda+)-1)(x-y)~dy \notag\\
= &~\int_0^{\infty}e^{i\lambda y/\epsilon}e^{-y}(m_1(\lambda+)-1)\left(x-\frac{y}{\epsilon}\right)~dy \label{eq: limit int}
\end{align}
We take the limit as $\epsilon \searrow 0$ of \cref{eq: limit int}. By \cref{eq: m1 limit}, $(m_1(\lambda+)-1)$ is bounded on $(-\infty,x]$ and approaches 0 as $x\to -\infty$, hence \cref{eq: limit int} tends to 0 for every $x$ by the dominated convergence theorem. By \cref{eq: justify 1}, \cref{eq: fourier convolution}, and the above discussion about \cref{eq: limit int}, the right hand side of \cref{eq: pre int} tends to the right hand side of \cref{eq: m1 int} as $\epsilon \searrow 0$. 

We can work similarly on $m_e(\lambda+)$. In this case, \cref{eq: pre int} is replaced by
\eqn
m_e(\lambda+) = F^{-1}\left(-\frac{i\epsilon}{\xi-(\lambda+i\epsilon)}\widehat{m_e(\lambda+)} \right)+G_{\lambda+i\epsilon}*(um_e(\lambda+)).
\eeqn
Again, 
\eqn
\lim_{\epsilon \searrow 0}G_{\lambda+i\epsilon}*(um_e(\lambda+)) =G_{\lambda+i0}*(um_e(\lambda+))\eeqn
pointwise, and $F^{-1}\left(-\frac{i\epsilon}{\xi-(\lambda+i\epsilon)}\widehat{m_e(\lambda+)} \right)$ equals
\begin{align}
&~\epsilon\chi_{\mathbb{R}^+}(x)e^{i(\lambda+i\epsilon)x}*m_e(\lambda+) \notag \\
= &~\epsilon\int_{-\infty}^{x}e^{i(\lambda+i\epsilon)(x-y)}m_e(y,\lambda+)~dy \notag\\
= &~e^{i(\lambda + i\epsilon)x}\epsilon \int_{-\infty}^xe^{\epsilon y}e^{-i\lambda y}m_e(y,\lambda+)~dy \notag \\
= &e^{i(\lambda + i\epsilon)x}\int_{-\infty}^{\epsilon x}e^y e^{-i\lambda y/\epsilon}m_e(y/\epsilon,\lambda+)~dy \label{eq: limt int 2}
\end{align}
By \cref{eq: me limit}, $m_e(x,\lambda+)$ is bounded on $(-\infty, |x|]$, and $e^{-i\lambda x}m_e(x,\lambda+) \to 1$ as $x\to -\infty$, therefore \cref{eq: limt int 2} tends to $e^{i\lambda x}$ for every $x$ by the dominated convergence theorem. \Cref{eq: me int} then follows as above.

We now prove that \cref{eq: m1 int} implies \cref{eq: m1 diff} and \cref{eq: m1 limit}. By the decomposition \cref{eq: G_lambda decomp}, we can write \cref{eq: m1 int} as
\eqn\label{eq: m1 int 1}
m_1(\lambda+) = 1 +ie^{i\lambda x}\int_{-\infty}^x e^{-i\lambda y}u(y)m_1(y,\lambda+)~dy - \widetilde{G}_{\lambda}*(um_1(\lambda+)).
\eeqn
Weakly differentiate \cref{eq: m1 int 1} to get
\begin{align} \label{eq: m1 diff 1}
\frac1i \partial_x m_1(\lambda+) =&~ i\lambda e^{i\lambda x}\int_{-\infty}^x e^{-i\lambda y}u(y)m_1(y,\lambda+)~dy + um_1(\lambda+)\notag\\
&\quad -\frac1i \partial_x\widetilde{G}_{\lambda}*(um_1(\lambda+)).
\end{align}
To compute $\frac1i \partial_x\widetilde{G}_{\lambda}*(um_1(\lambda+))$, we take its Fourier transform
\begin{align}
F\left(\frac1i \partial_x\widetilde{G}_{\lambda}*(um_1(\lambda+))\right) &= \xi\frac{\chi_{\real^-}}{\xi-\lambda}F(um_1(\lambda+))\notag\\
&= \chi_{\real^-}F(um_1(\lambda+)) + \lambda \frac{\chi_{\real^-}}{\xi-\lambda}F(um_1(\lambda+))\notag\\
&= F[C_-(um_1(\lambda+))+\lambda \widetilde{G}_{\lambda}*(um_1(\lambda+))].
\end{align}
All of the above steps can be justified using the fact that $um_1(\lambda+)\in L^p$. It follows that
\eqn
\frac1i \partial_x\widetilde{G}_{\lambda}*(um_1(\lambda+))=C_-(um_1(\lambda+))+\lambda \widetilde{G}_{\lambda}*(um_1(\lambda+)).
\eeqn
\Cref{eq: m1 diff 1} thus gives
\begin{align}
\frac1i\partial_x m_1(\lambda+) & = i\lambda e^{i\lambda x}\int_{-\infty}^x e^{-i\lambda y}um_1(y,\lambda+)~dy +um_1(\lambda+) -C_-(um_1(\lambda+))\notag\\
&\qquad - \lambda \widetilde{G}_{\lambda}*(um_1(\lambda+))\notag \\
&= C_+(um_1(\lambda+)) +\lambda G_{\lambda+0i}*(um_1(\lambda+))\notag \\
&= C_+(um_1(\lambda+)) +\lambda (m_1(\lambda+)-1). \label{eq: m1 diff 2}
\end{align}
To get the last step, we have used \cref{eq: m1 int} again. This proves \cref{eq: m1 diff}. To get \cref{eq: m1 limit}, we take the limit of \cref{eq: m1 int 1} as $x\to -\infty$. It suffices to show 
\eqn\label{eq: m1 limit 1}
\lim_{x\to -\infty}\widetilde{G}_{\lambda}*(um_1(\lambda+))(x)=0.
\eeqn
To see this, we write $\widetilde{G}_{\lambda}*(um_1(\lambda+))$ using the Fourier inversion formula as
\eqn
\widetilde{G}_{\lambda}*(um_1(\lambda+)) = F^{-1}(F(\widetilde{G}_{\lambda}*(um_1(\lambda+)))) = F^{-1}\left(\frac{\chi_{\real^-}}{\xi-\lambda}F(um_1(\lambda+))\right).
\eeqn
Recall that $\frac{\chi_{\real^-}(\xi)}{\xi-\lambda}F(um_1(\lambda+))\in L^1$, since $F(um_1(\lambda+))\in L^{p'}$ by the Hausdorff--Young inequality, and $\frac{\chi_{\real^-}}{\xi-\lambda}\in L^p$. Thus \cref{eq: m1 limit 1} follows by the Riemann-Lebesgue lemma. That \cref{eq: me int} implies \cref{eq: me diff} and \cref{eq: me limit} can be obtained in a similar way.
\end{proof}

To describe the Fredholm nature of the integral equations \cref{eq: m1 int} and \cref{eq: me int}, define
\begin{equation}\label{def: T lambda}
T_k\varphi=G_k*(u\varphi)
\end{equation}
for $k\in (\comp\setminus [0,\infty))\cup (\real^+\pm 0i)$.
The integral equations \cref{eq: m1 int} and \cref{eq: me int} are of the form $(I-T_k)\varphi = g$ where $g\in L^{\infty}$. The existence and uniqueness of Jost solutions follow from the invertibility of $I-T_k$ on suitable spaces. In the following, we first prove that $T_k$ are compact on certain weighted $L^{\infty}$ spaces. Thus the Fredholm alternative theorem will reduce the question to a vanishing lemma.

\begin{lem}\label{lem: compact}
Let $p>1$ and $s>s_1>1-\frac1p$ 
be given, and let 
$u\in L^p_s(\real)$. Then $T_k:L^{\infty}_{-(s-s_1)}(\real)\to L^{\infty}_{-(s-s_1)}(\real)$ is compact for every $k\in (\comp\setminus [0,\infty))\cup (\real^+\pm 0i)$.
\end{lem}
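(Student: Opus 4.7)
The plan is to verify compactness of $T_k$ by applying an Arzel\`a--Ascoli argument to the family $\{w^{-(s-s_1)}T_k\varphi_j\}$ for an arbitrary bounded sequence $\{\varphi_j\}\subset L^\infty_{-(s-s_1)}(\real)$; the weight $w^{-(s-s_1)}$ will supply the decay at infinity needed to work on the whole line. Preliminarily, the pointwise bound $|\varphi_j(y)|\le \|\varphi_j\|_{L^\infty_{-(s-s_1)}}w(y)^{s-s_1}$ yields $\|u\varphi_j\|_{L^p_{s_1}}\le \|u\|_{L^p_s}\|\varphi_j\|_{L^\infty_{-(s-s_1)}}$, and since $s_1 > 1 - \frac1p$ we have $w^{-s_1}\in L^{p'}(\real)$, so by H\"older $L^p_{s_1}\hookrightarrow L^1$ and hence $\{u\varphi_j\}$ is uniformly bounded in $L^1\cap L^p$.

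I would then split $G_k = G_k^\flat + G_k^\sharp$ following \cref{eq: G_lambda decomp}: set $G_k^\sharp = 0$ for $k\in \comp\setminus[0,\infty)$, while for $k = \lambda\pm 0i$ take $G_k^\sharp(x) = \pm ie^{i\lambda x}\chi_{\real^\pm}(x)$ and $G_k^\flat = -\widetilde G_\lambda$. In every case $G_k^\flat\in L^{p'}(\real)$, as already recorded in \Cref{lem: equiv int diff}, while $G_k^\sharp$ is either zero or bounded. Young's inequality then provides the uniform bound
\begin{equation*}
\|T_k\varphi_j\|_\infty \le \|G_k^\flat\|_{p'}\|u\varphi_j\|_p + \|G_k^\sharp\|_\infty\|u\varphi_j\|_1 \le C\|\varphi_j\|_{L^\infty_{-(s-s_1)}}.
\end{equation*}
Combined with $w(x)^{-(s-s_1)}\to 0$ as $|x|\to\infty$, this gives uniform decay at infinity for $\{w^{-(s-s_1)}T_k\varphi_j\}$.

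For equicontinuity of $\{T_k\varphi_j\}$ at each point, the $G_k^\flat$ contribution is handled by strong $L^{p'}$-continuity of translation: $|G_k^\flat*(u\varphi_j)(x)-G_k^\flat*(u\varphi_j)(x')|\le \|G_k^\flat(x-\cdot)-G_k^\flat(x'-\cdot)\|_{p'}\|u\varphi_j\|_p$, which tends to zero as $x'\to x$ uniformly in $j$ (using $p'<\infty$). For the oscillatory piece at $k = \lambda+0i$, namely $ie^{i\lambda x}\int_{-\infty}^x e^{-i\lambda y}u(y)\varphi_j(y)\,dy$, H\"older gives $|\int_{x'}^x e^{-i\lambda y}u\varphi_j\,dy|\le |x-x'|^{1/p'}\|u\varphi_j\|_p$; together with the Lipschitz continuity of $e^{i\lambda x}$ and the $L^1$-boundedness of the antiderivative this produces a uniform modulus of continuity, and the case $k = \lambda-0i$ is symmetric. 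Multiplication by the Lipschitz, bounded function $w^{-(s-s_1)}$ preserves both equicontinuity and uniform decay, so Arzel\`a--Ascoli produces a subsequence $\{w^{-(s-s_1)}T_k\varphi_{j_n}\}$ converging uniformly on $\real$, that is, $\{T_k\varphi_{j_n}\}$ converging in $L^\infty_{-(s-s_1)}(\real)$.

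The hard part will be the boundary case $k = \lambda\pm 0i$, where $G_k$ fails to lie in any $L^{p'}(\real)$ with $p'<\infty$ and the naive Young-inequality route breaks. The key device is to peel off the bounded, non-decaying oscillatory piece $\pm ie^{i\lambda x}\chi_{\real^\pm}$ via \cref{eq: G_lambda decomp} and treat its convolution with $u\varphi_j$ as an absolutely continuous antiderivative, with modulus of continuity supplied by H\"older. The strict inequality $s>s_1$, through the vanishing of $w^{-(s-s_1)}$ at infinity, finally supplies the decay needed to close the Arzel\`a--Ascoli argument.
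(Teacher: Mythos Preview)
Your argument is correct and, in fact, somewhat more elementary than the paper's. The paper obtains equicontinuity by computing the weak derivative $\partial_x T_k\varphi_n$ explicitly (splitting into four terms lying in $L^\infty$ or $L^p$), bounding $\|T_k\varphi_n\|_{W^{1,p}(-N,N)}$ for each $N$, and then invoking the compact Sobolev embedding $W^{1,p}(-N,N)\hookrightarrow C^{0,(p-1)/p}[-N,N]\hookrightarrow C^0[-N,N]$ together with a diagonal argument. You instead obtain equicontinuity directly from the kernel decomposition $G_k = G_k^\flat + G_k^\sharp$: translation continuity in $L^{p'}$ handles the $G_k^\flat$ piece, and an elementary H\"older estimate handles the absolutely continuous antiderivative coming from $G_k^\sharp$. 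Both routes converge on the same endgame---uniform convergence on compacta upgraded to $L^\infty_{-(s-s_1)}$ convergence via the uniform $L^\infty$ bound on $T_k\varphi_j$ and the decay of $w^{-(s-s_1)}$ at infinity. Your route avoids the derivative computation and the Sobolev machinery; the paper's route, on the other hand, records the $W^{1,p}_{loc}$ regularity of $T_k\varphi$ explicitly. One small remark: your estimates actually give a \emph{uniform} modulus of continuity (depending only on $|x-x'|$), not merely equicontinuity ``at each point'' as you phrase it, and it is this uniformity that makes the Arzel\`a--Ascoli step go through on $\real$.
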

\begin{proof}
We only provide argument for $T_{\lambda+0i}$. The cases $T_{\lambda-0i}$ and $T_k$ for $k\in \comp\setminus [0,\infty)$ can obtained analogously. Let $\{\varphi_n\}$ be a bounded sequence in $L^{\infty}_{-(s-s_1)}$. Recall from the proof of \Cref{lem: equiv int diff} that $G_{\lambda+0i}\in L^{\infty}+L^{p'}$, and $u\varphi_n \in L^1\cap L^p$ with suitable estimates. Hence there exists $C_1 = C_1(u,\lambda, p,s_1)$ such that
\eqn\label{est: l infty T phi}
\|T_{\lambda+0i}\varphi_n\|_{\infty}\le C_1\|\varphi_n\|_{L^{\infty}_{-(s-s_1)}}.
\eeqn
Also, one can compute the weak derivative of $T_{\lambda+0i}\varphi_n$ as in \cref{eq: m1 diff 2} to get
\eqn
\frac{1}{i}\partial_x T_{\lambda+0i}\varphi_n = i\lambda e^{i\lambda x}\int_{-\infty}^x e^{-i\lambda y}u\varphi_n(y)~dy +u\varphi_n^- -C_-(u\varphi_n) - \lambda \widetilde{G}_{\lambda}*(u\varphi_n).
\eeqn
The four terms above are in $L^{\infty}$, $L^p$, $L^p$ and $L^{\infty}$ respectively. As a consequence, for every natural number $N$, there exists $C_2 = C_2(u,\lambda, p, s_1,N)$ such that
\eqn\label{est: l p d T phi}
\|\partial_x T_{\lambda+0i}\varphi_n\|_{L^p(-N,N)}\le C_2\|\varphi_n\|_{L^{\infty}_{-(s-s_1)}}.
\eeqn
From \cref{est: l infty T phi} and \cref{est: l p d T phi} we conclude that there exists $C=C(u,\lambda, p ,s_1,N)$ such that
\eqn
\|T_{\lambda+0i}\varphi_n\|_{W^{1,p}(-N,N)}\le C\|\varphi_n\|_{L^{\infty}_{-(s-s_1)}}.
\eeqn
By the Sobolev embedding thoerem, the sequence $\{T_{\lambda+0i}\varphi_n\}$ is uniformly bounded in every $C^{0,\frac{p-1}p}[-N,N]$, which is compactly embedded in $C^0[-N,N]$. By passing to a subsequence and a Cantor diagonal argument, we can assume that $\{T_{\lambda+0i}\varphi_n\}$ converges uniformly on any compact subset of $\mathbb{R}$ to a continuous function $f$. Obviously 
\eqn
\|f\|_{\infty}\le \|T_{\lambda+0i}\varphi_n\|_{\infty}\le C_1\sup_{n}\|\varphi_n\|_{L^{\infty}_{-(s-s_1)}}.
\eeqn 
Hence $f\in L^{\infty}_{-(s-s_1)}$.
For any $\epsilon>0$, choose $N$ large enough so that 
$$w^{s_1-s}(x)C_1\sup_{n}\|\varphi_n\|_{L^{\infty}_{-(s-s_1)}}< \frac{\epsilon}{2}$$ 
for $|x|> N$. We then have 
$$w^{s_1-s}(x)|f(x)-T_{\lambda+0i}\varphi_n(x)|<\epsilon$$
for $|x|>N$.
For $|x|\le N$, $T_{\lambda+0i}\varphi_n$ converges uniformly to $f$, we obviously have 
$$w^{s_1-s}(x)|f(x)-T_{\lambda+0i}\varphi_n(x)|<\epsilon$$
for $n$ sufficiently large. Therefore $T_{\lambda+0i}\varphi_n$ converges to $f$ in $L^{\infty}_{-(s-s_1)}$.
\end{proof}

By the Fredholm alternative theorem, what is left to show is that $(I-T_k)\varphi =0$ and $\varphi \in L^{\infty}_{-(s-s_1)}$ imply $\varphi=0$. We accomplish this in two steps. First we prove that, in suitable spaces, any such function $\varphi$ must be in $L^2$. After that an $L^2$ vanishing lemma will close the argument. In fact, we can prove the following decay estimate for functions in the kernel of $I-T_k$.

\begin{lem}\label{lem: decay estimate}
Suppose $s>s_1>\frac12$, $u\in L^2_s(\real)$ and $k\in (\comp\setminus[0,\infty))\cup(\real^+\pm 0i)$. If $\varphi\in L^{\infty}_{-(s-s_1)}(\real)$ and $\varphi = T_k\varphi$, then 
there exists $C=C(u,k, s, s_1)$ and $r=r(s)>\frac12$ such that
\eqn\label{est: decay}
|\varphi(x) |\le C[w(x)]^{-r}.
\eeqn 
In particular, $\varphi\in L^2(\real)$.
\end{lem}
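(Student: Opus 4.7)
My plan is to exploit both the spatial decay of the kernel $G_k$ (or $\widetilde G_\lambda$) and the weighted decay of $u\varphi$ inside the convolution $\varphi=G_k*(u\varphi)$ via a split argument, upgrading $\varphi$ from $L^\infty_{-(s-s_1)}$ to genuine decay. The hypotheses first give $u\varphi\in L^2_{s_1}\subset L^1\cap L^2$, since $\|w^{s_1}u\varphi\|_2\le\|w^s u\|_2\,\|w^{-(s-s_1)}\varphi\|_\infty$. Plugging this back into the integral equation, and using $G_k\in L^2$ for $k\in\comp\setminus[0,\infty)$ or the $L^\infty+L^2$ decomposition of $G_{\lambda\pm0i}$ from \cref{eq: G_lambda decomp}, upgrades $\varphi$ to $L^\infty(\real)$ at no cost. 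Integration by parts in \cref{def: G} and \cref{def: tilde G} also yields the pointwise bounds $|G_k(x)|,|\widetilde G_\lambda(x)|\le C_k/(1+|x|)$ for $|x|\ge 1$.

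With these in hand I would split $\int G_k(x-y)u\varphi(y)\,dy$ at $|y|=|x|/2$. On $|y|\le|x|/2$ kernel decay and $\|u\varphi\|_1<\infty$ give a contribution $\le C/(1+|x|)$; on $|y|>|x|/2$ Cauchy--Schwarz with $G_k\in L^2$ and $(\int_{|y|>|x|/2}|u|^2\,dy)^{1/2}\le C\|u\|_{L^2_s}w^{-s}(x)$ give $\le Cw^{-s}(x)\|\varphi\|_\infty$. For $k\in\comp\setminus[0,\infty)$ this already yields $|\varphi(x)|\le Cw^{-r}(x)$ with $r=\min(1,s)$, which exceeds $\tfrac12$ since $s>s_1>\tfrac12$. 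For $k=\lambda\pm0i$ the same estimates settle the $\widetilde G_\lambda$ contribution; the oscillatory summand $\pm ie^{i\lambda x}\chi_{\real^\pm}*(u\varphi)=\pm ie^{i\lambda x}\int_{\mp\infty}^x e^{-i\lambda y}u\varphi(y)\,dy$ decays at the favorable end of infinity via the tail bound $\int_{\mp\infty}^x|u\varphi|\,dy\le Cw^{1/2-s}(x)$.

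The main obstacle is the opposite end of infinity, where the integral converges to the bounded non-decaying asymptote $ie^{i\lambda x}\widehat{u\varphi}(\lambda)$. Disposing of this requires a Fourier-side compatibility argument using the Sokhotski--Plemelj identity $\hat G_{\lambda+0i}(\xi)=\mathrm{p.v.}\,\chi_{\real^+}(\xi)/(\xi-\lambda)+i\pi\delta(\xi-\lambda)$: the delta component of $\hat\varphi=\hat G_{\lambda+0i}\cdot\widehat{u\varphi}$ produces an $e^{i\lambda x}$ summand of $\varphi$ whose weight must be matched against the boundary asymptote read off directly from the integral form, the expected outcome being $\widehat{u\varphi}(\lambda)=0$. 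Once this constraint is in hand, the residual $\int_x^\infty e^{-i\lambda y}u\varphi\,dy$ contributes at rate $s-\tfrac12$; when this falls below $\tfrac12$ for $s$ close to $\tfrac12$, I would bootstrap: the improved decay of $\varphi$ tightens the weighted integrability of $u\varphi$ to $L^2_{s+r_n}$, and reapplying the split and tail estimates produces the iteration $r_{n+1}=\min(1,r_n+s-\tfrac12)$, which terminates with $r=r(s)>\tfrac12$ in finitely many passes since $s>\tfrac12$. The resulting pointwise bound $|\varphi(x)|\le Cw^{-r}(x)$ immediately delivers $\varphi\in L^2(\real)$.
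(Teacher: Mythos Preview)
Your overall architecture matches the paper's: upgrade $\varphi$ to $L^\infty$, split the convolution to get $w^{-\min(1,s)}$ decay for the $G_k$ (or $\widetilde G_\lambda$) part, and then bootstrap the oscillatory remainder once the resonant constraint $\widehat{u\varphi}(\lambda)=0$ is known. Your splitting region ($|y|\le|x|/2$) differs cosmetically from the paper's ($|x-y|\le|x|/2$), but both work, and your bootstrap increment $r\mapsto r+(s-\tfrac12)$ is a harmless variant of the paper's $r\mapsto r+\tfrac12(s-\tfrac12)$.

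The genuine gap is your derivation of $\widehat{u\varphi}(\lambda)=0$. The Sokhotski--Plemelj identity $\hat G_{\lambda+0i}=\chi_{\real^+}\bigl[\mathrm{p.v.}\,(\xi-\lambda)^{-1}+i\pi\delta_\lambda\bigr]$ is merely the Fourier transform of the integral equation you started from; it carries no information beyond $\varphi=G_{\lambda+0i}*(u\varphi)$ and in particular imposes no constraint on $\widehat{u\varphi}(\lambda)$. Concretely, the $\delta_\lambda$ piece contributes $\tfrac{i}{2}\widehat{u\varphi}(\lambda)\,e^{i\lambda x}$ to $\varphi$, while the principal-value piece (near $\xi=\lambda$) contributes $\tfrac{i}{2}\widehat{u\varphi}(\lambda)\,\mathrm{sgn}(x)\,e^{i\lambda x}$; these sum to $i\,\widehat{u\varphi}(\lambda)\,\chi_{\real^+}(x)\,e^{i\lambda x}$, which \emph{exactly reproduces} the one-sided asymptote you read off from the physical-space formula. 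No mismatch, no constraint. (Equivalently: the circular move ``$\hat\varphi$ cannot contain a delta because $\varphi\in L^2$'' is unavailable, since $\varphi\in L^2$ is what you are trying to prove.)

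The paper closes this gap with a pairing identity (\Cref{lem: IBP iden}): from the kernel symmetry $G_{\lambda+0i}(x)=\overline{G_{\lambda+0i}(-x)}+i\e(x,\lambda)$ one gets, for $f,g\in L^1\cap L^2$,
\[
\langle G_{\lambda+0i}*f,\,g\rangle \;=\; i\,\langle f,\e\rangle\,\overline{\langle g,\e\rangle}\;+\;\langle f,\,G_{\lambda+0i}*g\rangle.
\]
Taking $f=g=u\varphi$ and using $\varphi=T_{\lambda+0i}\varphi$ yields $\langle\varphi,u\varphi\rangle=i\,|\langle u\varphi,\e\rangle|^2+\langle u\varphi,\varphi\rangle$. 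Since $u$ is real, $\langle\varphi,u\varphi\rangle=\langle u\varphi,\varphi\rangle=\int u|\varphi|^2$, forcing $\langle u\varphi,\e\rangle=\widehat{u\varphi}(\lambda)=0$. With this in hand, your bootstrap closes as written.
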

\begin{proof}
We first assume $k\in \comp\setminus [0,\infty)$. In this case we have
\eqn\label{eq: G_k conv}
\varphi(x) = \int_{\real}G_k(x-y)u\varphi(y)~dy
\eeqn
where $G_k\in L^2$. As before, the conditions on $u$ and $\varphi$ imply $u\varphi \in L^1\cap L^2$. Hence $\varphi=G_k*(u\varphi)\in L^{\infty}$. To prove the decay estimate for $|x|$ large, we split the integral in \cref{eq: G_k conv} into two pieces: one on $\{|y-x|\le \frac{|x|}{2}\}$, and the other on $\{|y-x|> \frac{|x|}{2}\}$. For the former, we have
\begin{align}
&~\left|\int_{|y-x|\le \frac{|x|}{2}}G_k(x-y)u\varphi(y)~dy\right|\notag\\
\le &~C\|G_k\|_2\|\varphi\|_{\infty}\left(\int_{|y-x|\le \frac{|x|}{2}}w^{-2s}(y)w^{2s}(y)u^2(y)~dy\right)^{\frac12}\notag\\
\le &~C\|G_k\|_2w^{-s}\left(\frac{|x|}{2}\right)\|u\|_{L^2_s}\le C|x|^{-s}.
\end{align}
To estimate the other piece, we use the pointwise bound $|G_k(x)|\le \frac{C}{|x|}$ for $|x|>1$, which is an easy consequence of \cref{def: G} from integration by parts. Therefore
\eqn
\left|\int_{|y-x|> \frac{|x|}{2}}G_k(x-y)u\varphi(y)~dy\right|
\le C\frac{1}{|x|}\|u\varphi\|_1\le \frac{C}{|x|}\|u\|_{L^2_s}\|\varphi\|_{\infty}.
\eeqn
This completes the proof when $k\in \comp\setminus[0,\infty)$. Next, we study the case when $k\in \real^+\pm 0i$. Again, for simplicity, we work on $T_{\lambda+0i}$ only. Using \cref{eq: G_lambda decomp}, we have
\eqn\label{eq: decay 1}
\varphi(x) = T_{\lambda+0i}\varphi(x) = ie^{i\lambda x}\int_{-\infty}^x e^{-i\lambda y}u\varphi(y)~dy-\int_{\mathbb{R}}\widetilde{G}_{\lambda}(x-y)u\varphi(y)~dy .
\eeqn
By the same reason as above, $u\varphi\in L^1\cap L^2$, so we get $\varphi\in L^{\infty}$. The decay estimate for the last term in \cref{eq: decay 1} can be proved in a similar way as above, as $\widetilde{G}_{\lambda}\in L^2$. We now prove the decay estimate on the first integral in \cref{eq: decay 1}. To that end, we need the crucial identity \cref{eq: vanish int}, which follows from an integration calculation detailed in \Cref{lem: IBP iden}. Using that lemma, we have
\begin{align}
&~\langle \varphi, u\varphi\rangle 
=\langle T_{\lambda+0i}\varphi,u\varphi\rangle \notag\\
=&~\langle G_{\lambda + 0i}*u\varphi, u\varphi\rangle = i |\langle u\varphi, \e\rangle|^2 + \langle u\varphi, G_{\lambda+0i}*u\varphi\rangle \notag\\
=&~i|\langle u\varphi, \e\rangle|^2 + \langle u\varphi, T_{\lambda+0i}\varphi\rangle \notag\\
=&~i|\langle u\varphi, \e\rangle|^2 + \langle u\varphi, \varphi\rangle.
\end{align}
Therefore $\langle u\varphi, \e\rangle=0$, or
\begin{equation}\label{eq: vanish int}
\int_{\mathbb{R}}e^{-i\lambda x}u\varphi(x)~dx=0.
\end{equation}
Using \cref{eq: vanish int} on \cref{eq: decay 1}, we have
\begin{align}
\varphi(x) =T_{\lambda+0i}\varphi(x)&= i\int_{-\infty}^x e^{i\lambda(x-y)}u\varphi(y)~dy - \widetilde{G}_{\lambda}*(u\varphi) (x) \label{eq: left vanish}\\
&= -i\int_x^{\infty} e^{i\lambda(x-y)}u\varphi(y)~dy - \widetilde{G}_{\lambda}*(u\varphi) (x)\label{eq: right vanish}
\end{align}
Denote
\eqn\label{def: I(x)}
I(x) = i\int_{-\infty}^x e^{i\lambda(x-y)}u\varphi(y)~dy = -i\int_x^{\infty} e^{i\lambda(x-y)}u\varphi(y)~dy.
\eeqn
We want to show that $I(x)$ has $\frac{1}{|x|^r}$ decay at infinity for some $r>\frac12$. Let us now use the first expression in \cref{def: I(x)} to study the decay of $I(x)$ as $x$ tends to $-\infty$. Since $\varphi(x) = I(x) -\widetilde{G}_{\lambda}*(u\varphi)(x)$, we have
\eqn\label{eq: I bootstrap}
I(x) = i\int_{-\infty}^x e^{i\lambda (x-y)}u\varphi(y)~dy = i\int_{-\infty}^x e^{i\lambda (x-y)}u(y)[I(y)-\widetilde{G}_{\lambda}*(u\varphi)(y)]~dy.
\eeqn
Since $\widetilde{G}_{\lambda}*(u\varphi)\in L^2$ and $u\in L^2_s$, we get
$u\widetilde{G}_{\lambda}*(u\varphi)\in L^1_s$. Thus
\eqn\label{eq: left vanish 1}
\left|\int_{-\infty}^x e^{i\lambda(x- y)}u(y)\widetilde{G}_{\lambda}*(u\varphi)(y)~dy\right|\le C\|w^su\widetilde{G}_{\lambda}*(u\varphi)\|_{1}w^{-s}(x)\le Cw^{-s}(x)
\eeqn
for $x<0$, as $w(y)>w(x)$ for $y<x<0$. Since $s>\frac12$, \cref{eq: left vanish 1} already has the required decay as $x$ tends to $-\infty$. We next use \cref{eq: I bootstrap} to bootstrap decay estimates on $I(x)$. Recall that $I(x)\in L^{\infty}$. Suppose $I(x)\in L^{\infty}_r(-\infty, 0]$ for some $r\in [0,\frac12]$. We have
\begin{align}
\int_{-\infty}^x |u(y)I(y)|~dy &\le C\int_{-\infty}^x w^{-s-r}(y)|w^su(y)|~dy\notag\\
&= C\int_{-\infty}^x w^{-r-\frac12(s-\frac12)}(y)w^{-\frac12(s+\frac12)}(y)|w^su(y)|~dy\notag\\
&\le Cw^{-r-\frac12(s-\frac12)}(x) \int_{\real}w^{-\frac12(s+\frac12)}(y)|w^su(y)|~dy\notag\\
&\le Cw^{-(r+\frac12(s-\frac12))}(x) \|w^{-\frac12(s+\frac12)}\|_2\|u\|_{L^2_s}\label{eq: left vanish 2}
\end{align}
for $x<0$. 
By \cref{eq: I bootstrap}, \cref{eq: left vanish 1}, and \cref{eq: left vanish 2}, we get
$I(x) \in L^{\infty}_{r+\frac12(s-\frac12)}(-\infty,0]$, which has a little more decay than what we started with. Finitely many iterations of this argument will bring the decay exponent $r$ above $\frac12$. A similar argument using the second expression in \cref{def: I(x)} shows that $I(x)$ has the required decay as $x$ tends to $\infty$.
\end{proof}

The next result is the $L^2$ vanishing lemma alluded to in the previous discussion. It provides the key step for the proof of invertibility of $I-T_k$. It means, among other implications, that there is no embedded eigenvalues in the essential spectrum of $L_u$.

\begin{lem}\label{lem: crucial iden}
Suppose $s>s_1>\frac12$, $u\in L^2_s(\real)$ and $k\in (\comp\setminus[0,\infty))\cup(\real^+\pm 0i)$. If $\varphi\in L^{\infty}_{-(s-s_1)}(\real)$ and $\varphi = T_k\varphi$,
then
\begin{enumerate}
\item If $k\in\comp\setminus \real$, $\varphi=0$.
\item If $k\in (\real\setminus [0,\infty))\cup (\real^+\pm 0i)$,
\begin{equation}\label{eq: crucial iden}
\left|\int_{\mathbb{R}}u\varphi ~dx\right|^2=-2\pi k\int _{\mathbb{R}}|\varphi|^2~dx .
\end{equation}
\end{enumerate}
In particular, $\varphi = 0$ if $k\in \real^+\pm 0i$, or if $k$ is in the resolvent set of $L_u=\frac1i\partial_x-C_+uC_+$, regarded as an operator on $\hardy^+$.
\end{lem}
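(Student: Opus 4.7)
The plan is to move the integral equation $\varphi=T_k\varphi=G_k*(u\varphi)$ to the Fourier side, where it becomes $\hat\varphi(\xi)=\hat G_k(\xi)\,\widehat{u\varphi}(\xi)$ with $\hat G_k(\xi)=\chi_{\real^+}(\xi)/(\xi-k)$ (interpreted distributionally when $k$ is real). By \Cref{lem: decay estimate} the function $\varphi$ lies in $L^2(\real)$, and the Fourier identity shows that $\hat\varphi$ is supported in $\real^+$, so $\varphi\in\hardy^+$. This last fact will be reused at the end.

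For part~(1), I would pair the equation with $u\varphi$ and apply Plancherel:
\begin{equation*}
\int u|\varphi|^2\,dx=\langle u\varphi,G_k*(u\varphi)\rangle=\frac{1}{2\pi}\int_0^\infty\frac{|\widehat{u\varphi}(\xi)|^2}{\xi-\bar k}\,d\xi.
\end{equation*}
Since $u$ is real the left side is real, so taking imaginary parts on the right and using $\text{Im}(\xi-\bar k)^{-1}=-\text{Im}(k)/|\xi-k|^2$ together with the Plancherel formula for $\|\varphi\|_2^2$ gives $\text{Im}(k)\,\|\varphi\|_2^2=0$. Hence $\varphi=0$ whenever $k\in\comp\setminus\real$.

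For part~(2), the plan is to establish \cref{eq: crucial iden} through integration by parts in
\begin{equation*}
2\pi\|\varphi\|_2^2=\int_0^\infty\frac{|\widehat{u\varphi}(\xi)|^2}{(\xi-k)^2}\,d\xi.
\end{equation*}
The boundary at $\xi=\infty$ vanishes by Riemann--Lebesgue, the boundary at $\xi=0$ contributes $|\widehat{u\varphi}(0)|^2/(-k)$, and the interior term equals $2\,\text{Re}\int_0^\infty\overline{\hat\varphi(\xi)}\,(\widehat{u\varphi})'(\xi)\,d\xi$ after writing $\overline{\widehat{u\varphi}}/(\xi-k)=\overline{\hat\varphi}$ on $\real^+$. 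Substituting $(\widehat{u\varphi})'=-i\widehat{xu\varphi}$ and applying Plancherel (using that $\hat\varphi$ is supported in $\real^+$) turns this interior term into $2\,\text{Re}\bigl[-2\pi i\int xu|\varphi|^2\,dx\bigr]=0$, since the bracket is purely imaginary. Since $\widehat{u\varphi}(0)=\int u\varphi\,dx$, this yields \cref{eq: crucial iden}.

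The main technical hurdle is the case $k=\lambda\pm 0i$ with $\lambda>0$, where $(\xi-k)^{-2}$ has a pole at $\xi=\lambda$; fortunately \Cref{lem: decay estimate} has already produced the vanishing $\widehat{u\varphi}(\lambda)=0$ via \cref{eq: vanish int}, which cancels the pole and makes $\hat\varphi=\chi_{\real^+}\widehat{u\varphi}/(\xi-\lambda)$ a genuine $L^2$ function, so the integration by parts goes through with zero boundary contribution at $\xi=\lambda$. The regularity of $\widehat{u\varphi}$ needed to differentiate it in the IBP (in particular $xu\varphi\in L^1$) can be secured by iterating the bootstrap in the proof of \Cref{lem: decay estimate} to push $\varphi$'s polynomial decay past any prescribed rate. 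With \cref{eq: crucial iden} in hand, the vanishing conclusions follow: when $k=\lambda\pm 0i$ with $\lambda>0$ the identity reads $|\int u\varphi|^2=-2\pi\lambda\,\|\varphi\|_2^2\le 0$, which forces $\varphi=0$; when $k\in\rho(L_u)$, the relation $(\xi-k)\hat\varphi=\chi_{\real^+}\widehat{u\varphi}$ combined with $\varphi\in\hardy^+$ shows that $\varphi$ solves $L_u\varphi=k\varphi$ (by the argument in \Cref{lem: equiv int diff}), so $\varphi=0$ since $k$ lies outside the spectrum of $L_u$.
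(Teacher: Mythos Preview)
Your argument for part~(1) and for part~(2) with $k<0$ is correct and essentially coincides with the paper's (your direct computation of $2\pi\|\varphi\|_2^2=\int_0^\infty|\widehat{u\varphi}|^2/(\xi-k)^2\,d\xi$ followed by one integration by parts is in fact a slight streamlining of the paper's version, which first differentiates the Fourier relation and then integrates).

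The gap is in the case $k=\lambda\pm 0i$. Your integration by parts produces, after excising $(\lambda-\epsilon,\lambda+\epsilon)$, the boundary contribution
\[
\frac{|\widehat{u\varphi}(\lambda-\epsilon)|^2+|\widehat{u\varphi}(\lambda+\epsilon)|^2}{\epsilon},
\]
and you need this to tend to $0$. With only $\widehat{u\varphi}\in H^1$ (which is all that $xu\varphi\in L^2$ gives) one has $|\widehat{u\varphi}(\xi)|\le C|\xi-\lambda|^{1/2}$ near $\lambda$, so the quotient is merely bounded, not vanishing; the interior integral $\int(|\widehat{u\varphi}|^2)'/(\xi-\lambda)\,d\xi$ is equally problematic. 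You claim this is fixed by iterating the bootstrap of \Cref{lem: decay estimate} ``to push $\varphi$'s polynomial decay past any prescribed rate,'' but that is false: the convolution $\widetilde G_\lambda*(u\varphi)$ inherits at best $O(1/|x|)$ decay from the pointwise bound $|\widetilde G_\lambda(x)|\le C/|x|$, so $\varphi$ cannot be pushed past $L^\infty_{\min(s,1)}$. Consequently $xu\varphi\in L^1$ (hence $\widehat{u\varphi}\in C^1$) is \emph{not} available when $s$ is close to $\tfrac12$, and your IBP across $\xi=\lambda$ does not go through. This is exactly the difficulty the paper flags before the proof, and it is why the paper introduces a Littlewood--Paley decomposition $P_n$: multiplying the projected equation by $ix\,\overline{P_n\varphi}$, integrating, summing over $n$, and inserting a cutoff equal to $1$ near $\lambda$ allows one to integrate by parts \emph{away} from the singularity, using only $\widehat{u\varphi}\in H^1$.
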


Identity \cref{eq: crucial iden} is reminiscent of Lemma 2.5 in \cite{wu2016simplicity}. However, the proof of \cref{eq: crucial iden} is much more delicate when $k=\lambda\pm 0i$, as $\lambda>0$ may introduce a singularity to $\hat{\varphi}$ and in particular make it non-differentiable at $\lambda$.
\begin{proof}
Let us first assume $k\in \comp\setminus\real$. By the same proof as in \Cref{lem: equiv int diff}, $\varphi=T_k\varphi$ implies
\eqn\label{eq: varphi diff complex case}
\frac1i\partial_x\varphi-C_+(u\varphi)=k\varphi,
\eeqn
and 
\eqn\label{eq: varphi fourier side}
\chi_{\real^+}\widehat{u\varphi}=(\xi-k)\hat\varphi.
\eeqn
Using \cref{eq: varphi fourier side} with the fact that $\varphi\in L^2$, proved in \Cref{lem: decay estimate}, we have $\varphi\in \hardy^+$. Thus $C_+\varphi=\varphi$. Multiply \cref{eq: varphi diff complex case} by $\bar{\varphi}$ and take the imaginary part to get
\eqn\label{eq: varphi diff complex case 1}
-\frac12 |\varphi|^2_x -\text{Im }(C_+(u\varphi)\bar{\varphi}) = (\text{Im }k)|\varphi|^2
.
\eeqn
Integrating \cref{eq: varphi diff complex case 1} and using the decay estimate \cref{est: decay} on $\varphi$, we get
\eqn\label{eq: varphi diff complex case 2}
0=-\text{Im }\int_{\real} u|\varphi|^2~dx = -\text{Im }\int_{\real}C_+(u\varphi)\bar{\varphi}~dx=  (\text{Im }k)\int_{\real}|\varphi|^2~dx.
\eeqn
To get the middle equality, we used the self-adjointness of $C_+$ and $C_+\varphi=\varphi$. \Cref{eq: varphi diff complex case 2} implies $\varphi=0$, as $k\in \comp\setminus\real$.

Next, if $k\in \real\setminus[0,\infty)$, we obtain \cref{eq: varphi diff complex case}, \cref{eq: varphi fourier side}, and $\varphi\in \hardy^+$ as before. In addition, \Cref{lem: decay estimate} gives $\varphi\in L^{\infty}_r$ for some $r>\frac12$. This together with the condition $u\in L^2_s$ for $s>\frac12$ imply $xu\varphi\in L^2$. Therefore $\widehat{u\varphi}\in H^1(\real)$. By the Sobolev embedding theorem, $\widehat{u\varphi}$ is continuous, and so is $\hat{\varphi}$ on $[0,\infty)$. Weakly differentiate \cref{eq: varphi fourier side} to get
\eqn
\widehat{u\varphi}'=\hat{\varphi}+(\xi-k)\hat{\varphi}'
\eeqn
when $\xi>0$.
Multiply by $\overline{\hat{\varphi}}$ and take the real part to get
\eqn
\text{Re }(\widehat{u\varphi}'\overline{\hat{\varphi}})=|\hat{\varphi}|^2+(\xi-k)\text{Re }(\varphi'\overline{\hat{\varphi}}),
\eeqn
or
\eqn
\text{Re }(\widehat{u\varphi}'\overline{\hat{\varphi}})= |\hat{\varphi}|^2 +\frac {\xi-k}2(|\hat{\varphi}|^2)'.
\eeqn
Now integrate between $0$ and $\infty$ to get
\eqn\label{eq: fourier k neg}
\text{Re }\int_{\real}\widehat{u\varphi}'\overline{\hat{\varphi}}~d\xi = \int_{\real}|\hat{\varphi}|^2~d\xi + \frac k2|\hat{\varphi}|^2(0+) -\frac12 \int_{\real}|\hat{\varphi}|^2~d\xi.
\eeqn
To obtain \cref{eq: fourier k neg}, we took the freedom to rewrite the integration domain as $\real$ whenever the integral involves $\hat{\varphi}$, a function supported on $[0,\infty)$, and have integrated the last term by part. To compute the boundary term for that step, we used the fact that $\lim_{\xi\to \infty}(\xi-k)|\hat{\varphi}|^2=0$, which is a consequence of \cref{eq: varphi fourier side} and the fact that $u\varphi \in L^1$. Now observe that the integral on the left hand side of \cref{eq: fourier k neg} is purely imaginary, by the Plancherel identity:
\eqn
\int_{\real}\widehat{u\varphi}'\overline{\hat{\varphi}}~d\xi = -2\pi i\int_{\real}xu|\varphi|^2~dx.
\eeqn
Hence \cref{eq: fourier k neg} gives
\eqn\label{eq: varphi L2}
2\pi \int_{\real}|\varphi|^2~dx = \int_{\real}|\hat{\varphi}|^2~d\xi = -k|\hat{\varphi}|^2(0+).
\eeqn
Using \cref{eq: varphi fourier side} to write $-k\hat{\varphi}(0+) = \widehat{u\varphi}(0)$, \cref{eq: crucial iden} follows.

Finally, assume $k=\lambda\pm0i$, where $\lambda\in \real^+$. Checking the signs of both sides of \cref{eq: crucial iden}, one easily sees that $\varphi=0$ is the only way to avoid a contradiction. Therefore the key is to prove \cref{eq: crucial iden}.
By the same proof as in \Cref{lem: equiv int diff}, $\varphi = T_{\lambda\pm 0i}\varphi$ implies
\begin{equation}\label{eq: diff eig}
\frac{1}{i}\partial_x\varphi-C_+(u\varphi)=\lambda \varphi,
\end{equation}
The Fourier transform of \cref{eq: diff eig} gives
\begin{equation}\label{eq: fourier eig}
\widehat{u\varphi}\chi_{\mathbb{R}^+} = (\xi-\lambda)\hat{\varphi}.
\end{equation}
This implies that $\varphi$ has its frequencies supported on $\real^+$, and thus belongs to $\hardy^+$.
Let $\psi_n^2$ be a smooth partition of unity on $(0,\infty)$:
\begin{equation}
\chi_{(0,\infty)}(\xi)=\sum_n\psi_n^2(\xi).
\end{equation}
Here the $\psi_n$'s are compactly supported smooth functions on $(0,\infty)$. An easy way to construct them is to make dyadic dilations of a fixed function. Let $P_n=F^{-1}\psi_nF$ be the Littlewood-Paley type projection associated with $\psi_n$. Letting $P_n$ act on \cref{eq: diff eig}, we have
\begin{equation}\label{eq: proj diff eig}
\frac{1}{i}(P_n\varphi)_x-P_n(u\varphi)=\lambda P_n\varphi.
\end{equation}
Multiply by $ix\overline{P_n\varphi}$ and take the real part to get
\begin{equation}
\text{Re } x(P_n\varphi)_x\overline{P_n\varphi} +\text{Im } xP_n(u\varphi)\overline{P_n\varphi}=0,
\end{equation}
or
\begin{equation}\label{eq: re im}
 \frac{1}{2}x\left(|P_n\varphi|^2\right)_x +\text{Im } xP_n(u\varphi)\overline{P_n\varphi}=0.
\end{equation}
We claim that $xP_n(u\varphi)\in L^2$. Indeed, 
\begin{align}
xP_n(u\varphi)(x) &= x\int_{\real}\check{\psi_n}(x-y)u(y)\varphi(y)~dy\notag\\
&= \int_{\real}(x-y)\check{\psi_n}(x-y)u(y)\varphi(y)~dy+\int_{\real}\check{\psi_n}(x-y)yu(y)\varphi(y)~dy\notag\\
&= (x\check{\psi_n})*(u\varphi)+ \check{\psi_n}*(xu\varphi).\label{eq: split schwartz}
\end{align}
Since $x\check{\psi_n}$ is in Schwartz class, and the conditions on $u$ and $\varphi$ imply $u\varphi\in L^2$, we conclude that the first term in \cref{eq: split schwartz} is in $L^2$. The second term is also in $L^2$ because $xu\varphi$ is, as is shown above. Integrate \cref{eq: re im} on $\mathbb{R}$ and use the Plancherel identity on the last term to get
\begin{equation}\label{eq: re im 2}
\frac{1}{2}x|P_n\varphi(x)|^2\bigg|_{-\infty}^{\infty}-\frac{1}{2}\int_{\mathbb{R}}|P_n\varphi|^2~dx +\frac{1}{2\pi}\text{Re }\int_0^{\infty}(\psi_n \widehat{u\varphi})'\psi_n\bar{\hat\varphi}~d\xi=0.
\end{equation}
We claim that the first term in \cref{eq: re im 2} vanishes. In fact, 
\begin{align}
|P_n\varphi(x)| &=\left| \int_{\real}\check{\psi_n}(x-y)\varphi(y)~dy\right|\notag\\
& \le \int_{|y-x|\le \frac{|x|}{2}}|\check{\psi_n}(x-y)\varphi(y)|~dy+ \int_{|y-x|> \frac{|x|}{2}}|\check{\psi_n}(x-y)\varphi(y)|~dy\notag\\
&\le \sup_{\frac{|x|}{2}\le |y|\le \frac{3|x|}{2}}|\varphi(y)|\|\check{\psi_n}\|_1+ \|\varphi\|_{\infty}\int_{|y|>\frac{|x|}{2}}|\check{\psi_n}(y)|~dy\notag\\
&\le C|x|^{-r}+C|x|^{-1}\notag
\end{align}
when $|x|$ is large. The last inequality above follows from estimate \cref{est: decay} and the fact that $\check{\psi_n}$ is in Schwartz class. We can now rewrite \cref{eq: re im 2} as
\begin{equation}
-\frac{1}{2}\int_{\mathbb{R}}|P_n\varphi|^2~dx +\frac{1}{2\pi}\text{Re }\int_0^{\infty}\left(\left(\frac{\psi_n^2}{2}\right)' \widehat{u\varphi}\bar{\hat\varphi}+\psi_n^2\widehat{u\varphi}'\bar{\hat\varphi}\right)~d\xi=0.
\end{equation}
Take the sum over $n$ to get
\begin{equation}\label{eq: sum re im 0}
-\frac{1}{2}\int_{\mathbb{R}}|\varphi|^2~dx + \frac{1}{2\pi}\text{Re }\sum_n\int_0^{\infty}\left(\frac{\psi_n^2}{2}\right)' \widehat{u\varphi}\bar{\hat\varphi}~d\xi+\frac{1}{2\pi}\text{Re }\int_{\real}\widehat{u\varphi}'\bar{\hat\varphi}~d\xi=0.
\end{equation}
The frequency integration domain of the first and last term in \cref{eq: sum re im 0} was changed from $\real^+$ to $\real$. This is allowed because $\varphi$ has frequencies supported on $\real^+$. The last integral in \cref{eq: sum re im 0} is purely imaginary, as can be seen by the Plancherel identity, hence the real part vanishes. Since $\sum_n\left(\frac{\psi_n^2}{2}\right)'=0$, where the sum is locally finite, we may insert into the second integral in \cref{eq: sum re im 0} a function $\chi$ that is compactly supported on $(0,\infty)$ and identically equal to 1 in a neighborhood of $\lambda$:
\begin{equation}\label{eq: sum re im}
-\frac{1}{2}\int_{\mathbb{R}}|\varphi|^2~dx + \frac{1}{2\pi}\text{Re }\sum_n\int_0^{\infty}\left(\frac{\psi_n^2}{2}\right)' \widehat{u\varphi}\bar{\hat\varphi}(1-\chi)~d\xi=0.
\end{equation}
Since $xu\varphi \in L^2$ as observed above, $\widehat{u\varphi}\in H^1$. Using \cref{eq: fourier eig} and the fact that $\chi=1$ in a neighborhood of $\lambda$, we get $\hat{\varphi}(1-\chi)\in H^1(0,\infty)$. Therefore we can integrate the second term in \cref{eq: sum re im} by parts and get
\begin{equation}
-\frac{1}{2}\int_{\mathbb{R}}|\varphi|^2~dx - \frac{1}{4\pi}\text{Re }\int_0^{\infty} \left(\widehat{u\varphi}\bar{\hat\varphi}(1-\chi)\right)'~d\xi=0,
\end{equation}
or
\begin{equation}\label{eq: sum re im 2}
-\frac{1}{2}\int_{\mathbb{R}}|\varphi|^2~dx + \frac{1}{4\pi}\text{Re }\widehat{u\varphi}(0)\bar{\hat\varphi}(0+)=0.
\end{equation}
The application of the fundamental theorem of calculus can be justified by the Sobolev embedding theorem. The fact that $\widehat{u\varphi}\bar{\hat{\varphi}}$ vanishes at infinity follows from $u\varphi \in L^1$, and \cref{eq: fourier eig}. \Cref{eq: fourier eig} also implies $\widehat{u\varphi}(0)=-\lambda\hat\varphi(0+)$. \Cref{eq: crucial iden} then follows from \cref{eq: sum re im 2}.
\end{proof}

By \cite{wu2016simplicity}, if $u\in L^{\infty}\cap L^2_s$ for some $s>\frac12$, then $L_u=\frac1i\partial_x-C_+uC_+$ has finitely many negative simple eigenvalues and the resolvent set has the form $\rho(L_u)=\mathbb{C}\setminus\{\lambda_1,\dots\lambda_N\}\setminus [0,\infty)$. In fact, \cite{wu2016simplicity} required $u\in L^{\infty}\cap L^2_1$, but the same result is true with the slightly weaker decay assumption, if one uses the same kind of bootstrap argument in the proof of \Cref{lem: decay estimate} to provide additional decay estimates on the eigenfunctions.

We are now ready to establish existence and uniqueness of the Jost solutions. 

\begin{thm}\label{thm: existence of Jost soln}
Let $s>s_1>\frac12$, and $u\in  L^2_s(\real)$. Let $\rho(L_u)$ be the resolvent set of $L_u=\frac1i\partial_x -C_+uC_+$, regarded as an operator on $\hardy^+$. Then for every $k\in \rho(L_u)\cup (\real^+\pm 0i)$, and every $\lambda>0$, there exists unique $m_1(x,k)$ and $m_e(x,\lambda\pm 0i)\in L^{\infty}_{-(s-s_1)}(\real)$ solving \cref{eq: m1 int} and \cref{eq: me int} respectively, with improved bounds $m_1(x,k), m_e(x,\lambda\pm 0i)\in L^{\infty}(\real)$. Furthermore, $k\mapsto m_1(k)$ is analytic from $\rho(L_u)$ to $L^{\infty}_{-(s-s_1)}(\real)$, and $m_1(k)\in C^{0,\gamma}_{loc}((\rho(L_u)\cup(\real^+\pm 0i)),L^{\infty}_{-(s-s_1)}(\real))$, while $m_e(\lambda\pm 0i)\in C^{0,\gamma}_{loc}((0,\infty),L^{\infty}_{-(s-s_1)}(\real)) $. Here $\gamma$ is some number between $0$ and $1$.
\end{thm}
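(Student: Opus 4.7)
With the three preparatory lemmas already in place, the existence/uniqueness half is a short Fredholm argument. Fix any $k\in\rho(L_u)\cup(\real^+\pm 0i)$. Lemma \ref{lem: compact} shows $T_k$ is compact on $L^{\infty}_{-(s-s_1)}(\real)$, so $I-T_k$ is Fredholm of index zero. Lemma \ref{lem: crucial iden} (together with its ``In particular'' conclusion) shows that its kernel on this space is trivial: for $k\in\real^+\pm 0i$ this is direct, while for $k\in\rho(L_u)$ any $\varphi\in\ker(I-T_k)$ lies in $L^2$ by Lemma \ref{lem: decay estimate}, has $\hat\varphi$ supported on $\real^+$ by the Fourier-side identity $\chi_{\real^+}\widehat{u\varphi}=(\xi-k)\hat\varphi$, and hence solves $L_u\varphi=k\varphi$ in $\hardy^+$, forcing $\varphi=0$. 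The Fredholm alternative then makes $I-T_k$ a bijection, and since $1$ and $\pmb e(\lambda)$ both lie in $L^\infty\subset L^{\infty}_{-(s-s_1)}$, the formulas
\eqn
m_1(k)=(I-T_k)^{-1}(1),\qquad m_e(\lambda\pm 0i)=(I-T_{\lambda\pm 0i})^{-1}(\pmb e(\lambda))
\eeqn
produce unique solutions of \cref{eq: m1 int} and \cref{eq: me int} in $L^{\infty}_{-(s-s_1)}$. The sharper $L^\infty$ bound is then immediate from Lemma \ref{lem: equiv int diff}.

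\textbf{Analyticity on $\rho(L_u)$.} Differentiating \cref{def: G} under the integral gives $\pa_k^j G_k(x)=\frac{j!}{2\pi}\int_0^\infty\frac{e^{ix\xi}}{(\xi-k)^{j+1}}\,d\xi$, each a legitimate convolution kernel in $L^{p'}$ with a bound uniform for $k$ in compact subsets of $\comp\setminus[0,\infty)$. This produces a norm-convergent Taylor expansion of $k\mapsto G_k$ in $L^{p'}$, hence of $k\mapsto T_k$ in $\mathcal L(L^{\infty}_{-(s-s_1)})$. A standard Neumann-series perturbation argument then gives analyticity of $k\mapsto(I-T_k)^{-1}$ and therefore of $m_1(k)$.

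\textbf{H\"older continuity up to the boundary.} For $k$ near $\real^+\pm 0i$ use the decomposition \cref{eq: G_lambda decomp}. The $-\widetilde G_\lambda$ half is real-analytic in $\lambda>0$ (the cut in \cref{def: tilde G} sits on $(-\infty,0]$), giving locally Lipschitz dependence. The jump contribution $\pm ie^{i\lambda x}\chi_{\real^\pm}$ is the delicate piece: its difference at $\lambda_1,\lambda_2$ applied to $u\varphi$ involves $(e^{-i\lambda_1 y}-e^{-i\lambda_2 y})u(y)\varphi(y)$, and interpolating between the trivial bound $\le 2$ and the Lipschitz bound $\le|\lambda_1-\lambda_2||y|$ yields, for any $\gamma\in[0,1]$,
\eqn
|e^{-i\lambda_1 y}-e^{-i\lambda_2 y}|\le C|\lambda_1-\lambda_2|^\gamma w(y)^\gamma.
\eeqn
For $\varphi\in L^{\infty}_{-(s-s_1)}$, Cauchy--Schwarz against $u\in L^2_s$ then bounds the resulting integral by $C|\lambda_1-\lambda_2|^\gamma\|\varphi\|_{L^{\infty}_{-(s-s_1)}}$ provided $w^{\gamma+(s-s_1)-s}=w^{\gamma-s_1}\in L^2$, i.e.\ $\gamma<s_1-\tfrac12$. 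A parallel interpolation shows $\|\pmb e(\lambda_1)-\pmb e(\lambda_2)\|_{L^{\infty}_{-(s-s_1)}}\le C|\lambda_1-\lambda_2|^\gamma$ for any $\gamma\le s-s_1$. Combining with the norm continuity of $(I-T_k)^{-1}$ delivers the asserted $C^{0,\gamma}_{loc}$ regularity of $m_1$ and $m_e$ with any $\gamma$ below $\min(s_1-\tfrac12,\,s-s_1)$.

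\textbf{Main obstacle.} The only real difficulty is the boundary H\"older estimate, since the jump piece of $G_{\lambda\pm 0i}$ prevents straightforward differentiation in $\lambda$, while the weighted $L^\infty$ setting rules out a clean $L^2$-based argument. The interpolation above trades smoothness in $\lambda$ against decay in $y$, and the hypothesis $s>s_1>\tfrac12$ is precisely what is required to make both trades viable.
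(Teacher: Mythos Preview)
Your proposal is correct and follows essentially the same route as the paper: the Fredholm alternative via Lemmas~\ref{lem: compact} and~\ref{lem: crucial iden}, the $L^\infty$ upgrade from Lemma~\ref{lem: equiv int diff}, analyticity from norm-differentiability of $k\mapsto G_k$, and H\"older continuity at the boundary via the decomposition~\cref{eq: G_lambda decomp} together with an interpolation between the trivial and Lipschitz bounds on the oscillatory factor. The only cosmetic difference is that the paper tracks the difference $e^{ih(x-y)}-1$ in the combined variable $x-y$ (using the weight splitting $u\psi=w^{-p_2}u_2\psi_1$ and $w(x-y)\le w(x)w(y)$) rather than splitting $e^{i\lambda x}$ and $e^{-i\lambda y}$ separately as you do; your split works equally well, but note that it produces a second term $(e^{i\lambda_1 x}-e^{i\lambda_2 x})\int e^{-i\lambda_2 y}u\varphi\,dy$ which you should mention explicitly---it is handled by the same $\e(\lambda)$ interpolation you already record.
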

\begin{proof}
\Cref{lem: compact}, \Cref{lem: crucial iden}, and the Fredholm alternative theorem imply existence and uniqueness of $m_1(x,k)$ and $m_e(x,\lambda\pm 0i)$. The improved $L^{\infty}$ bounds were proved in \Cref{lem: equiv int diff}. The analytic dependence of $m_1(k)$ on $k$ follows from the analytic dependence of $T_k$ on $k$. That in turn follows from the fact that $\frac1h(G_{k+h}-G_k)$ converges in $L^2$ to $-\frac{1}{2\pi}\int_0^{\infty}\frac{1}{(\xi-k)^2}~d\xi$, a result that is easy to see.
What is left to show is the H\"{o}lder continuity of $m_1(k)$ as $k$ approaches the positive half line from above and below, and of $m_e(\lambda\pm 0i)$. We write $m_1(k) = (I-T_k)^{-1}1$, and $m_e(\lambda\pm 0i) = (I-T_{\lambda\pm 0i})^{-1}\e(\lambda)$. Using the identity
\eqn
(I-T_{k+h})^{-1}-(I-T_{k})^{-1}=(I-T_k)^{-1}(T_{k+h}-T_{k})(I-T_{k+h})^{-1},
\eeqn
we reduce the problem to showing the following three points:
\begin{enumerate}[(a)]
\item $\e(\lambda)\in C^{0,\gamma}_{loc}((0,\infty),L^{\infty}_{-(s-s_1)}(\real)) $.
\item The $L^{\infty}_{-(s-s_1)}$ operator norm of $T_{k+h}-T_{k}$ is bounded by $C|h|^{\gamma}$ for fixed $k\in\real^+\pm 0i$ and small $h$.
\item The $L^{\infty}_{-(s-s_1)}$ operator norm of $(I-T_{k+h})^{-1}$ is uniformly bounded for fixed $k\in \real^+\pm 0i$ and small $h$.
\end{enumerate}
In the above, if $k=\lambda+0i$, then Im $h\ge 0$, while if $k=\lambda-0i$, then Im $h\le 0$.
To prove (a), we assume Im $h=0$, and estimate
\begin{align}
|w^{s_1-s}(x)(e^{i(\lambda+h)x}-e^{i\lambda x})|&= |w^{s_1-s}(x)(e^{ihx}-1)|\notag\\
&\le w^{s_1-s}(x)\min\{|hx|,2\}\label{eq: e lambda 1}
\end{align}
If $|x|<1/\sqrt h$, \cref{eq: e lambda 1} is bounded by $|hx|\le \sqrt{h}$. If $|x|\ge 1/\sqrt h$, \cref{eq: e lambda 1} is bounded by $2w^{s_1-s}(1/\sqrt h)\le Ch^{\frac{s-s_1}{2}}$. Hence $\|\e(\lambda+h)-\e(\lambda)\|_{L^{\infty}_{-(s-s_1)}}\le Ch^{p_1}$ for $p_1=\min(\frac{s-s_1}{2},\frac12)$. This proves (a). Notice that (b) implies (c), as $I-T_{k}$ is already shown to be invertible. For simplicity, we only work on $T_{\lambda+0i+h}-T_{\lambda+0i}$ with Im $h\ge 0$. To prove (b), we estimate
\begin{align}
&~(T_{\lambda+0i+h}-T_{\lambda+0i})\psi(x)\notag\\
= &~i\int_{-\infty}^xe^{i\lambda(x-y)}[e^{ih(x-y)}-1]u(y)\psi(y)~dy - (\widetilde{G}_{\lambda+h}-\widetilde{G}_{\lambda})*(u\psi)(x)
\end{align}
Since $u\in L^2_s$, and $\psi\in L^{\infty}_{-(s-s_1)}$, we can write $u=w^{-s}u_1$, with $\|u_1\|_2= \|u\|_{L^2_s}$, and $\psi= w^{s-s_1}\psi_1$, with $\|\psi_1\|_{\infty}=\|\psi\|_{L^{\infty}_{-(s-s_1)}}$. Hence 
\eqn
u\psi = w^{-s_1}u_1\psi_1=w^{-p_2}w^{-\frac12-p_2}u_1\psi_1,
\eeqn 
where $p_2=\frac12(s_1-\frac12)>0$. Since $w^{-\frac12-p_2}\in L^2$, we get $u_2=w^{-\frac12-p_2}u_1\in L^1$. Notice that $|e^{ih(x-y)}-1|\le C|h(x-y)|$ when $|h(x-y)|\le 1$, and $|e^{ih(x-y)}-1|\le 2$, since $\text{Im }h\ge 0$ and $x-y\ge 0$. Therefore
\begin{align}
&~|w^{s_1-s}(x)(T_{\lambda+0i+h}-T_{\lambda+0i})\psi(x)|\notag\\
\le &~C\bigg(w^{s_1-s}(x)\int_{-\infty}^x\min\{|h(x-y)|,2\}w^{-p_2}(y)|u_2(y)|~dy\notag\\
&\qquad+\|\widetilde{G}_{\lambda+h}-\widetilde{G}_{\lambda}\|_2\|u_1\|_2\bigg)\|\psi\|_{L^{\infty}_{-(s-s_1)}}\notag\\
\le &~C\bigg(\int_{-\infty}^x\min\{|h(x-y)|,2\}w^{-p_3}(x-y)|u_2(y)|~dy+|h|\bigg)\|\psi\|_{L^{\infty}_{-(s-s_1)}}
\end{align}
for $p_3=\min(s-s_1,p_2)$. Here we have used the Plancherel identity to estimate $\|\widetilde{G}_{\lambda+h}-\widetilde{G}_{\lambda}\|_2$, and have used the elementary inequality $w(x-y)\le w(x)w(y)$. The term $(\min\{|h(x-y)|,2\}w^{-p_3}(x-y))$ can be estimated as follows. If $|x-y|<1/\sqrt{|h|}$, $$\min\{|h(x-y)|,2\}w^{-p_3}(x-y)\le |h(x-y)|\le \sqrt{|h|}.$$
On the other hand, if $|x-y|\ge 1/\sqrt{|h|}$,
$$\min\{|h(x-y)|,2\}w^{-p_3}(x-y)\le 2w^{-p_3}(1/\sqrt {|h|})\le C|h|^{\frac{p_3}{2}}$$
for $h$ small. Therefore 
\eqn
|w^{s_1-s}(x)(T_{\lambda+0i+h}-T_{\lambda+0i})\psi(x)|\le C|h|^{p_4}\|\psi\|_{L^{\infty}_{-(s-s_1)}}
\eeqn
for $p_4=\min(\frac{p_3}2,\frac12)$. This proves (b) with $\gamma=p_4$. 
\end{proof}

\section{Scattering coefficients between Jost solutions}\label{sec: scattering data}

Now that the Jost solutions are obtained, we may proceed to study relations between them that give rise to the scattering coefficients of the Fokas--Ablowitz IST. Such relations were obtained formally by Fokas and Ablowitz in \cite{fokas1983inverse}. In addition, there are also relations between different scattering coefficients, many of which are stated in \cite{kaup1998inverse}. However, the arguments used in \cite{kaup1998inverse} are formal as well and depend on certain identities involving the inverse scattering problem. Here we will prove these relations and construct the scattering data directly using the setup in \Cref{sec: existence}. 

First, we want to establish differentiability with respect to $\lambda$ of the function $\overline{\pmb{e}}(\lambda)m_e(\lambda\pm 0i)$. The $\lambda$ derivative of $\overline{\pmb{e}}(\lambda)m_e(\lambda\pm 0i)$ will help produce an important scattering coefficient. In fact, we will show that $\overline{\pmb{e}}(\lambda)m_e(\lambda\pm 0i)$ is differentiable as a map into the weighted $L^{\infty}$ spaces used in \Cref{sec: existence}. It is curious that differentiability of the particular combination $\overline{\pmb{e}}m_e$ can be proven under the same decay assumptions on $u$ as in \Cref{sec: existence}, whereas any slightly different function, such as $m_e(\lambda\pm 0i)$ alone, $m_1(\lambda\pm 0i)$, or $\overline{\pmb{e}}(\lambda)m_1(\lambda\pm 0i)$, will require significantly stronger decay conditions on $u$ to be differentiable in the above sense. The basic reason is that the term $\partial_{\lambda}e^{i\lambda x} = xe^{i\lambda x} $ comes out when we differentiate \cref{eq: m1 int} and \cref{eq: me int} with respect to $\lambda$. We would need $xe^{i\lambda x} $ to belong to $L^{\infty}_{-(s-s_1)}$, which forces $s-s_1\ge 1$. Combined with the condition $s>s_1>\frac12$ in \Cref{thm: existence of Jost soln}, this implies $s>\frac32$. Certain other considerations seem to even require $s>\frac52$. The special favor found only by $\overline{\pmb{e}}(\lambda)m_e(\lambda\pm 0i)$ can be explained as follows. To simplify notation, we suppress the $\lambda$ dependence when it is clear from the context. By \cref{eq: G_lambda decomp} and \cref{def: tilde G}, we formally have
\begin{equation}
\partial_{\lambda}G_{\lambda\pm 0i}(x)=-\frac{1}{2\pi\lambda}+ixG_{\lambda \pm 0i}(x).
\end{equation}
Rewrite \cref{eq: me int} as
\begin{equation}\label{eq: e bar me int}
\overline{\pmb e}m_e(\lambda\pm) = 1+\overline{\pmb e} G_{\lambda \pm 0i}*(u\pmb e\overline{\pmb e} m_e(\lambda\pm)),
\end{equation}
and differentiate with respect to $\lambda$ formally:
\begin{align}
&~\partial_{\lambda}(\overline{\pmb e}m_e(\lambda\pm)) \notag\\
= &~-ix\overline{\pmb e} \left(G_{\lambda \pm 0i}*(u\pmb e\overline{\pmb e}m_e(\lambda\pm))\right)+\overline{\pmb e} \left(-\frac{1}{2\pi \lambda}+ixG_{\lambda \pm 0i}\right)*(u\pmb e\overline{\pmb e} m_e(\lambda\pm))\notag \\
&~+\overline{\pmb e}G_{\lambda \pm 0i}*(iyu\pmb e\overline{\pmb e} m_e(\lambda\pm))+\overline{\pmb e}G_{\lambda \pm 0i}*(u\pmb e\partial_{\lambda}(\overline{\pmb e} m_e(\lambda\pm)))\notag \\
= &~ -\frac{\overline{\pmb e}}{2\pi\lambda}\int_{\mathbb{R}}u(y)m_e(y,\lambda\pm)~dy+\overline{\pmb e}G_{\lambda \pm 0i}*(u\pmb e\partial_{\lambda}(\overline{\pmb e} m_e(\lambda\pm))). \label{eq: diff lambda me}
\end{align}
Multiply both sides of \cref{eq: diff lambda me} by $\pmb e$ to get
\begin{equation}
\pmb e\partial_{\lambda}(\overline{\pmb e}m_e(\lambda\pm))=-\frac{1}{2\pi\lambda}\int_{\mathbb{R}}u(y)m_e(y,\lambda\pm)~dy+G_{\lambda \pm 0i}*(u\pmb e\partial_{\lambda}(\overline{\pmb e} m_e(\lambda\pm))).\label{eq: diff lambda me 2}
\end{equation}
Notice \cref{eq: diff lambda me 2} no longer involves any extra factor of $x$. In fact, the cancelation happening in \cref{eq: diff lambda me} removed all extra factors of $x$. The proof of differentiability is to show that a similar, although no longer exact, cancelation happens on the level of difference quotients.

\begin{lem}\label{lem: me differentiable}
Let $s>s_1>\frac12$, and $u\in L^2_s(\real)$. Let $m_e(\lambda\pm 0i)$ be the Jost functions constructed in \Cref{thm: existence of Jost soln}. Then $\overline{\pmb e}(\lambda) m_e(\lambda\pm 0i)\in C^{1,\gamma}_{loc}((0,\infty), L^{\infty}_{-(s-s_1)}(\real))$ for some $0<\gamma<1$, and 
\eqn\label{eq: diff me m1 rel}
\partial_{\lambda}(\overline{\pmb e}(\lambda)m_e(\lambda\pm 0i)) = -\frac{\eb}{2\pi \lambda}\left(\int_{\real}u(y)m_e(y,\lambda\pm 0i)~dy\right)m_1(\lambda\pm 0i).
\eeqn
\end{lem}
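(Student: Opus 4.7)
My proof plan rigorizes the formal calculation sketched in the paragraph between \cref{eq: e bar me int} and \cref{eq: diff lambda me 2}. Write $\mu(\lambda\pm) := \eb(\lambda)\,m_e(\lambda\pm 0i)$ and multiply \cref{eq: me int} by $\eb(\lambda)$ to obtain the translation-invariant fixed-point equation
\[
\mu(\lambda\pm) \;=\; 1 + K_\lambda * (u\mu(\lambda\pm)), \qquad K_\lambda(z) := e^{-i\lambda z}G_{\lambda\pm 0i}(z) = \pm i\chi_{\real^\pm}(z) - e^{-i\lambda z}\widetilde G_\lambda(z).
\]
This is the key structural observation: the $\lambda$-dependence now lives entirely in the convolution kernel $K_\lambda$, and the $\lambda$-independent piece $\pm i\chi_{\real^\pm}$ drops out of every difference quotient. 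Since Plancherel identifies $e^{-i\lambda z}\widetilde G_\lambda(z)$ as the inverse Fourier transform of $\chi_{(-\infty,-\lambda)}(\xi)/\xi$, one heuristically expects $\partial_\lambda K_\lambda(z) = -e^{-i\lambda z}/(2\pi\lambda)$, whose convolution against $u\mu$ produces exactly $-\eb(x)\cdot(2\pi\lambda)^{-1}\int u m_e\,dy$.

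The plan is to form the difference quotient $\Delta_h := h^{-1}(\mu(\lambda+h\pm) - \mu(\lambda\pm))$ and subtract the two fixed-point equations:
\[
(I - B_\lambda)\Delta_h \;=\; \frac{K_{\lambda+h} - K_\lambda}{h}*(u\mu(\lambda+h\pm)), \qquad B_\lambda\psi := K_\lambda * (u\psi).
\]
A direct computation factors $B_\lambda$ as the conjugate $M_{\eb}\, T_{\lambda\pm 0i}\, M_{\e}$, where $M_f$ denotes multiplication by $f$; hence \Cref{thm: existence of Jost soln} yields invertibility of $I - B_\lambda$ on $L^{\infty}_{-(s-s_1)}$, together with the identity $(I-B_\lambda)^{-1}(\eb\, c) = c\,\eb\, m_1(\lambda\pm 0i)$ for any scalar $c$. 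It then remains to show that the right-hand side converges in $L^{\infty}_{-(s-s_1)}$ as $h\to 0$ to $-\eb/(2\pi\lambda)\cdot\int u m_e\,dy$; applying $(I-B_\lambda)^{-1}$ will deliver \cref{eq: diff me m1 rel}. H\"older continuity of $\partial_\lambda\mu$ in $\lambda$ follows at once from the product form of \cref{eq: diff me m1 rel}: $\eb(\lambda)$, $m_1(\lambda\pm 0i)$, and the scalar $\int u m_e(\lambda\pm 0i)\,dy$ (via weighted Cauchy--Schwarz against $u\in L^2_s$) are all locally H\"older by \Cref{thm: existence of Jost soln}, and their product inherits that property in $L^{\infty}_{-(s-s_1)}$.

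The main obstacle is the $L^{\infty}_{-(s-s_1)}$ convergence of the right-hand side. On the Fourier side this quantity equals
\[
\frac{1}{2\pi h}\int_{-(\lambda+h)}^{-\lambda}\frac{e^{ix\xi}}{\xi}\,\widehat{u\mu(\lambda+h\pm)}(\xi)\,d\xi
\]
for $h>0$ (the case $h<0$ is symmetric). I would split the integrand around $\eb(x)\cdot\widehat{u\mu(\lambda\pm)}(-\lambda)/(-\lambda)$. The Fourier-side piece $|\widehat{u\mu(\lambda+h\pm)}(\xi) - \widehat{u\mu(\lambda\pm)}(-\lambda)|$ is controlled by H\"older continuity of $\mu(\lambda\pm)$ in $L^{\infty}_{-(s-s_1)}$ combined with weighted Cauchy--Schwarz (valid since $s_1>1/2$), which forces $u\mu(\lambda\pm)\in L^1_\alpha$ for some $\alpha>0$ and hence $\widehat{u\mu(\lambda\pm)}$ to be uniformly $\alpha$-H\"older in $\xi$; this contributes $O(h^\gamma)$ uniformly in $x$. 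The delicate piece is $|e^{ix\xi}/\xi - e^{-i\lambda x}/(-\lambda)| \lesssim \min(h|x|,1)/\lambda$: since $e^{ix\xi}$ does not decay in $x$, this bound only becomes summable after multiplying by $w(x)^{-(s-s_1)}$, via the two-regime interpolation $|x|\le 1/\sqrt{h}$ vs.\ $|x|>1/\sqrt{h}$, again producing $O(h^\gamma)$ for some $\gamma=\gamma(s-s_1)>0$. Without the special pairing $\eb\,m_e$ one would have to differentiate under the convolution and pick up an extra factor of $x$, as the discussion leading to \cref{eq: diff lambda me 2} highlights; this would demand substantially stronger decay on $u$.
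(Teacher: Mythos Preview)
Your proposal is correct and is in fact a cleaner repackaging of the paper's argument. The paper applies the discrete product rule to $\eb m_e$ directly, obtaining four terms (\cref{eq: diff quo 1}); it then shows by hand that three of these telescope into the single expression $S_h(um_e)$ of \cref{eq: diff quo 2}, and is forced to work with an $h$-dependent operator $T_{\lambda,h}$, whose closeness to $T_{\lambda\pm 0i}$ must be established separately (points (a) and (c) in the proof). Your passage to the shifted kernel $K_\lambda = \eb G_{\lambda\pm 0i}$ makes the cancellation structural: the $\lambda$-independent piece $\pm i\chi_{\real^\pm}$ falls out of $K_{\lambda+h}-K_\lambda$ automatically, your Fourier expression $\frac{1}{2\pi h}\int_{-(\lambda+h)}^{-\lambda}\xi^{-1}e^{ix\xi}\widehat{u\mu}(\xi)\,d\xi$ is exactly the paper's $\e S_h$ in disguise, and---crucially---your Fredholm operator $B_\lambda = M_{\eb}T_{\lambda\pm 0i}M_\e$ is \emph{fixed}, so no analogue of the paper's point (a) is needed. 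The endgame is the same in both proofs: the $\min(h|x|,2)w^{-(s-s_1)}(x)$ interpolation over $|x|\lessgtr h^{-1/2}$, and the observation that $u\mu\in L^1_\alpha$ for some $\alpha>0$ (since $s_1>\tfrac12$) giving H\"older continuity of $\widehat{u\mu}$. Your route buys a shorter proof with one fewer operator estimate; the paper's route stays closer to the original variables and makes the role of $m_1$ as $(I-T_{\lambda\pm 0i})^{-1}1$ slightly more transparent.
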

\begin{proof}
We denote the shift operator by $(\tau_h f)(\lambda) = f(\lambda + h)$, and the difference quotient operator by $D_h f= \frac1h(\tau_h f-f)$. One has the product rule:
\eqn
D_h (fg) = (D_h f)g + (\tau_h f)(D_h g).
\eeqn
For simplicity, we only work on $m_e(\lambda-0i)$ and write it simply as $m_e$.
$D_h$ acting on \cref{eq: e bar me int} gives
\begin{align}
D_h(\overline{\pmb e}m_e) &= (D_h \eb)\left(G_{\lambda - 0i}*(u\e\eb m_e)\right)+(\tau_h \eb)[(D_hG_{\lambda-0i})*(u\e\eb m_e)]\notag\\
&\quad + (\tau_h\eb)\left[(\tau_h G_{\lambda-0i})*\left(u(D_h\e)\eb m_e\right)\right]\notag\\
&\quad +(\tau_h \eb)\left[(\tau_h G_{\lambda-0i})*\left(u(\tau_h \e)D_h(\eb m_e)\right)\right].\label{eq: diff quo 1}
\end{align}
We add up the first three terms in \cref{eq: diff quo 1} as follows:
\begin{align}
&~\frac{e^{-i(\lambda+h)x}-e^{-i\lambda x}}{h}\int_{\real}G_{\lambda-0i}(x-y)um_e(y)~dy \notag\\
&\quad + e^{-i(\lambda+h)x}\int_{\real}\frac{G_{\lambda+h-0i}(x-y)-G_{\lambda-0i}(x-y)}{h}um_e(y)~dy\notag\\
&\quad + e^{-i(\lambda+h)x}\int_{\real}G_{\lambda+h-0i}(x-y)\frac{e^{i(\lambda+h)y}-e^{i\lambda y}}{h}e^{-i\lambda y}um_e(y)~dy\notag\\
=&~ e^{-i(\lambda+h)x}\int_{\real}\frac{G_{\lambda+h-0i}(x-y)-G_{\lambda-0i}(x-y)e^{ih(x-y)}}{h}e^{ihy}um_e(y)~dy\notag\\
=&~\frac{e^{-i(\lambda+h)x}}{2\pi}\int_{\real}\frac1h\left( \int_0^h \frac{e^{i(x-y)\xi}}{\xi-\lambda-h}~d\xi \right) e^{ihy}um_e(y)~dy.\label{eq: diff quo 2}
\end{align}
The last equality above follows from \cref{eq: G_lambda decomp} and \cref{def: tilde G}. Denote \cref{eq: diff quo 2} by $S_h(um_e)$. We get from \cref{eq: diff quo 1} that
\eqn\label{eq: diff quo 3}
\e D_h(\eb m_e) = \e S_h(um_e) + \e(\tau_h \eb)\left[(\tau_h G_{\lambda-0i})*\left(u(\tau_h \e)\eb \e D_h(\eb m_e)\right)\right].
\eeqn
Let $\varphi$ be the solution to 
\eqn\label{eq: diff quo 4}
\varphi = -\frac{1}{2\pi \lambda}\int_{\real}um_e(y)~dy + G_{\lambda-0i}*(u\varphi)
\eeqn
whose existence is guaranteed by \Cref{thm: existence of Jost soln}. We want to show $\partial_{\lambda} (\eb m_e) = \eb\varphi$, which, as we will see in the following, is equivalent to \cref{eq: diff me m1 rel}. To that end,
take the difference of \cref{eq: diff quo 3} and \cref{eq: diff quo 4} and rearrange terms to get
\begin{align}
&~[\e D_h(\eb m_e)-\varphi] - \e(\tau_h \eb)\left[(\tau_h G_{\lambda-0i})*\left(u(\tau_h \e)\eb [\e D_h(\eb m_e)-\varphi]\right)\right]\notag\\
=&~ \e S_h(um_e) +\frac{1}{2\pi \lambda}\int_{\real}um_e(y)~dy\notag\\
&~~ +\e(\tau_h \eb)\left[(\tau_h G_{\lambda-0i})*\left(u(\tau_h \e)\eb \varphi\right)\right]-G_{\lambda-0i}*(u\varphi).\label{eq: diff quo 5}
\end{align}
Denote $T_{\lambda, h}\psi= \e(\tau_h \eb)\left[(\tau_h G_{\lambda-0i})*\left(u(\tau_h \e)\eb\psi\right)\right]$, and recall the definition of $T_{\lambda-0i}$ by \cref{def: T lambda}, \cref{eq: diff quo 5} can be written as
\begin{align}
&~(I-T_{\lambda,h})[\e D_h(\eb m_e)-\varphi] \notag\\
= &~\e S_h(um_e) +\frac{1}{2\pi \lambda}\int_{\real}um_e(y)~dy + (T_{\lambda,h}-T_{\lambda-0i})\varphi. \label{eq: diff quo 6}
\end{align}
In view of \cref{eq: diff quo 6}, it suffices to show the following three points:
\begin{enumerate}[(a)]
\item For $\lambda>0$ fixed, $(I-T_{\lambda,h})^{-1}$ has uniformly bounded $L^{\infty}_{-(s-s_1)}$ operator norm for small $h$.
\item $\|\e S_h(um_e) +\frac{1}{2\pi \lambda}\int_{\real}um_e(y)~dy\|_{L^{\infty}_{-(s-s_1)}}\to 0$ as $h\to 0$.
\item $\|(T_{\lambda,h}-T_{\lambda-0i})\varphi\|_{L^{\infty}_{-(s-s_1)}}\to 0$ as $h\to 0$.
\end{enumerate}
In fact, we claim that the $L^{\infty}_{-(s-s_1)}$ operator norm of $T_{\lambda,h}-T_{\lambda-0i}$ tends to $0$ as $h$ tends to $0$. This will imply both (a) and (c), as $(I-T_{\lambda-0i})$ is invertible by \Cref{thm: existence of Jost soln}. We write 
\begin{align}
&~(T_{\lambda,h}-T_{\lambda-0i})\psi \notag\\
= &~ \e(\tau_h \eb)\left[(\tau_h G_{\lambda-0i})*\left(u(\tau_h \e)\eb \psi\right)\right]-G_{\lambda-0i}*(u\psi)\notag\\
=&~[\e(\tau_h \eb)-1]\left[(\tau_h G_{\lambda-0i})*\left(u(\tau_h \e)\eb \psi\right)\right]\notag\\
&~+(\tau_h G_{\lambda-0i}-G_{\lambda-0i})*\left(u(\tau_h \e)\eb \psi\right)\notag\\
&~+G_{\lambda-0i}*\left(u[(\tau_h \e)\eb-1] \psi\right)\notag\\
= &~I(x) + II(x) + III(x). \label{eq: I II III}
\end{align}
We estimate the three terms separately. By \cref{eq: G_lambda decomp} and \cref{def: tilde G}, there exists $C=C(\lambda,u)$ such that
\begin{align}
|w^{s_1-s}(x)I(x)|&\le Cw^{s_1-s}(x)|e^{-ihx}-1|\left(\|u\psi\|_{\infty}+\|\widetilde{G}_{\lambda+h}\|_2\|u\psi\|_2\right)\notag\\
&\le Cw^{s_1-s}(x)\min\{|hx|,2\}\|\psi\|_{L^{\infty}_{-(s-s_1)}}
\end{align}
If $|x|<1/\sqrt{h}$, $|hx|<\sqrt h$. On the other hand if $|x|>1/\sqrt h$, $$w^{s_1-s}(x)\le w^{s_1-s}(1/\sqrt h) \le Ch^{\frac{s-s_1}{2}}.$$
Therefore
\eqn\label{est: I(x)}
|w^{s_1-s}(x)I(x)|\le Ch^{p_1}\|\psi\|_{L^{\infty}_{-(s-s_1)}},
\eeqn
where $p_1=\min(\frac{s-s_1}{2}, \frac12)$. Let $p_2 = \frac12(s_1-\frac12)>0$. By the conditions on $u$ and $\psi$, we have 
\begin{align}
|u(y)\psi(y)|&\le w^{-s_1}(y)u_1(y)\|\psi\|_{L^{\infty}_{-(s-s_1)}} \notag\\
&= w^{-p_2}(y)w^{-\frac12-p_2}u_1(y)\|\psi\|_{L^{\infty}_{-(s-s_1)}} \notag\\
&=w^{-p_2}(y)u_2(y)\|\psi\|_{L^{\infty}_{-(s-s_1)}},
\end{align}
where $u_1 = w^{s}u\in L^2$, and $u_2=w^{-\frac12-p_2}u_1\in L^1 $.
Letting $p_3=\min(s-s_1, p_2)$, and using the relation $w(x-y)\le Cw(x)w(y)$, we have
\begin{align}
&~|w^{s_1-s}(x)II(x)|\notag\\
\le&~ Cw^{s_1-s}(x)\left(\int_x^{\infty}|e^{ih(x-y)}-1||u(y)\psi(y)|~dy + \|\widetilde{G}_{\lambda+h}-\widetilde{G}_{\lambda}\|_2\|u\psi\|_2\right)\notag\\
\le &~Cw^{s_1-s}(x)\left(\int_x^{\infty}\min\{|h(x-y)|,2\}w^{-p_2}(y)|u_2(y)|~dy + h\right)\|\psi\|_{L^{\infty}_{-(s-s_1)}}\notag\\
\le &~ C\left(\int_x^{\infty}\min\{|h(x-y)|,2\}w^{-p_3}(x-y)|u_2(y)|~dy + h\right)\|\psi\|_{L^{\infty}_{-(s-s_1)}}.
\end{align}
By the same argument as above, we get $\min\{|h(x-y)|,2\}w^{-p_3}(x-y)\le Ch^{p_4}$ for $p_4=\min(\frac{p_3}{2}, \frac12)$. Hence
\eqn\label{est: II(x)}
|w^{s_1-s}(x)II(x)|\le Ch^{p_4}\|\psi\|_{L^{\infty}_{-(s-s_1)}}.
\eeqn
Similarly, 
\begin{align}
&~|w^{s_1-s}(x)III(x)| \le |III(x)|\notag\\
\le &~C\left(\int_{\real}|u(y)\psi(y)||e^{ihy}-1|~dy+ \|\widetilde{G}_{\lambda}\|_2\left(\int_{\real}|u(y)\psi(y)|^2|e^{ihy}-1|^2\right)^{1/2} \right)\notag\\
\le &~C\bigg(\int_{\real}w^{-p_2}(y)u_2(y)\min\{|hy|,2\}~dy \notag\\
&\qquad + \left(\int_{\real}|w^{-s_1}(y)u_1(y)|^2(\min\{|hy|,2\})^2~dy\right)^{1/2}\bigg)\|\psi\|_{L^{\infty}_{-(s-s_1)}}\notag\\
\le&~ Ch^{p_4}\|\psi\|_{L^{\infty}_{-(s-s_1)}}. \label{est: III(x)}
\end{align}
By \cref{eq: I II III}, \cref{est: I(x)}, \cref{est: II(x)}, and \cref{est: III(x)}, we have
\eqn
\|(T_{\lambda,h}-T_{\lambda-0i})\psi \|_{L^{\infty}_{-(s-s_1)}}\le Ch^{p_4}\|\psi\|_{L^{\infty}_{-(s-s_1)}}. 
\eeqn
This proved points (a) and (c) mentioned above. To prove (b), we recall that $S_h(um_e)$ was defined by \cref{eq: diff quo 2}. So
\begin{align}
&~\left(\e S_h(um_e)\right)(x)+\frac{1}{2\pi\lambda}\int_{\real}um_e(y)~dy\notag\\
= &~\frac{1}{2\pi}\int_{\real}\left[\frac1h\left( \int_0^h \frac{e^{i(x-y)(\xi-h)}}{\xi-\lambda-h}~d\xi \right) +\frac1{\lambda}\right]um_e(y)~dy\notag\\
= &~\frac{1}{2\pi}\int_{\real}\frac1h\left( \int_0^h \frac{\lambda [e^{i(x-y)(\xi-h)}-1]+\xi-h}{\lambda(\xi-h-\lambda)}~d\xi \right) um_e(y)~dy.
\end{align}
Hence
\begin{align}
&~\left|w^{s_1-s}(x)\left(\left(\e S_h(um_e)\right)(x)+\frac{1}{2\pi\lambda}\int_{\real}um_e(y)~dy\right)\right|\notag\\
\le &~ Cw^{s_1-s}(x)\int_{\real}\left(\min\{|2h(x-y)|,2\}+h\right)|u(y)m_e(y)|~dy\notag\\
\le &~Cw^{s_1-s}(x)\int_{\real}\left(\min\{|2h(x-y)|,2\}+h\right)w^{-p_2}(y)|u_2(y)|~dy\notag\\
\le &~C\int_{\real}\left(\min\{|2h(x-y)|,2\}+h\right)w^{-p_3}(x-y)|u_2(y)|~dy\notag\\
\le &~Ch^{p_4}.
\end{align}
We have thus proved $\partial_{\lambda}(\eb m_e)=\eb \varphi$. Since $\varphi$ satisfies \cref{eq: diff quo 4},  $\partial_{\lambda}(\eb m_e)$ satisfies \cref{eq: diff lambda me 2}. In other words, 
\begin{align}
\partial_{\lambda}(\eb m_e(\lambda-0i))& = \eb(I-T_{\lambda-0i})^{-1}\left(-\frac{1}{2\pi\lambda}\int_{\mathbb{R}}um_e(y,\lambda-0i)~dy\right)\notag\\
&=-\frac{\eb}{2\pi \lambda}\left(\int_{\real}u(y)m_e(y,\lambda-0i)~dy\right)m_1(\lambda-0i).
\end{align} 
Its H\"older continuity follows from that of $\e(\lambda)$, $m_e(\lambda-0i)$ and $m_1(\lambda-0i)$, which was established in the proof of \Cref{thm: existence of Jost soln}. 
\end{proof}

We are now ready to described the scattering coefficients for the Fokas--Ablowtiz IST. 

\begin{lem}\label{lem: def scattering coeff}
Let $m_1(x,k)$ and $m_e(x,\lambda\pm 0i)$ be the Jost solutions constructed in \Cref{thm: existence of Jost soln}. Define
\begin{align}\label{def: Gamma}
\Gamma(\lambda) &= 1+i\int_{\real}u(x)m_e(x,\lambda+0i)e^{-i\lambda x}~dx \\
&= \frac{1}{1-i\int_{\real}u(x)m_e(x,\lambda-0i)e^{-i\lambda x}~dx},
\end{align}
\eqn\label{def: beta}
\beta(\lambda) = i\int_{\real}u(x)m_1(x,\lambda+0i)e^{-i\lambda x}~dx,
\eeqn
and
\eqn\label{def: f}
f(\lambda) = -\frac{1}{2\pi \lambda}\int_{\real}u(x)m_e(x,\lambda- 0i)~dx.
\eeqn
Then the following relations between Jost solutions hold:
\eqn\label{eq: me pm rel}
m_e(\lambda+0i)=\Gamma(\lambda)m_e(\lambda-0i),
\eeqn
\eqn\label{eq: m1 jump}
m_1(\lambda+0i)-m_1(\lambda-0i) = \beta(\lambda) m_e(\lambda-0i),
\eeqn
and
\eqn\label{eq: diff me m1}
\e\partial_{\lambda}(\eb m_e(\lambda-0i))=f(\lambda)m_1(\lambda-0i).
\eeqn
\end{lem}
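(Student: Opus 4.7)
The three identities divide into a direct consequence of Lemma~\ref{lem: me differentiable} (for \cref{eq: diff me m1}) and two computations built on the jump identity for $G_k$ on the positive real line (for \cref{eq: me pm rel} and \cref{eq: m1 jump}). The plan is to exploit the uniqueness part of \Cref{thm: existence of Jost soln} throughout: rather than compute the Jost solutions, I would show that the candidate on each side of the proposed relation satisfies the same Fredholm integral equation as the Jost solution it is claimed to equal.

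The key observation to set up is that from \cref{eq: G_lambda decomp} one has, almost everywhere in $x$,
\eqn\label{eq: G jump}
G_{\lambda+0i}(x) - G_{\lambda-0i}(x) = i e^{i\lambda x} = i\e(\lambda)(x),
\eeqn
so that, for any $g\in L^1(\real)$,
\eqn
(G_{\lambda+0i} - G_{\lambda-0i})*g\,(x) = i\e(\lambda)(x)\int_{\real} e^{-i\lambda y}g(y)\,dy.
\eeqn
Since $u\in L^2_s \subset L^1$ and the Jost solutions are in $L^\infty$, the products $u m_1(\lambda\pm 0i)$ and $u m_e(\lambda\pm 0i)$ lie in $L^1$, so these manipulations are justified.

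For \cref{eq: m1 jump}: I would subtract the integral equations for $m_1(\lambda\pm 0i)$, insert \cref{eq: G jump}, and regroup to get
\eqn
w := m_1(\lambda+0i)-m_1(\lambda-0i) = \beta(\lambda)\e(\lambda) + G_{\lambda-0i}*(uw),
\eeqn
where the recognition of $\beta(\lambda)$ uses \cref{def: beta}. Comparing with \cref{eq: me int} at $\lambda-0i$, both $w$ and $\beta(\lambda)m_e(\lambda-0i)$ solve the same Fredholm equation $(I-T_{\lambda-0i})\psi = \beta(\lambda)\e(\lambda)$ in $L^\infty_{-(s-s_1)}$, so uniqueness yields the identity.

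For \cref{eq: me pm rel}: I would set $v = c\, m_e(\lambda-0i)$ for an as-yet-undetermined scalar $c=c(\lambda)$ and test whether $v$ satisfies $v = \e(\lambda) + G_{\lambda+0i}*(uv)$. Writing $G_{\lambda+0i} = G_{\lambda-0i} + i\e(\lambda)$ on the right-hand side and using \cref{eq: me int} for $m_e(\lambda-0i)$, the equation reduces to the scalar condition $c(1-\alpha(\lambda))=1$, where $\alpha(\lambda):= i\int_{\real}u(y)m_e(y,\lambda-0i)e^{-i\lambda y}\,dy$. This forces $c=\Gamma(\lambda)$ in the form of the second expression in the lemma; uniqueness of the Jost solution then gives $m_e(\lambda+0i)=\Gamma(\lambda)m_e(\lambda-0i)$. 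Substituting this back into \cref{def: Gamma} recovers the first expression for $\Gamma$ (and simultaneously verifies that the denominator $1-\alpha(\lambda)$ is nonzero, since $\Gamma$ is well-defined by the first expression).

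For \cref{eq: diff me m1}: this is immediate from \cref{eq: diff me m1 rel} of \Cref{lem: me differentiable} with the minus-sign choice, multiplying both sides by $\e(\lambda)$ and recognizing $f(\lambda)$ from \cref{def: f}. No obstacle here. The only delicate point in the whole argument is making sure $c(\lambda)$ is uniquely determined in step (1) and that $\Gamma(\lambda)$ is well-defined (i.e.\ $1-\alpha(\lambda)\ne 0$), but this falls out of the argument automatically once $v$ is required to satisfy the integral equation.
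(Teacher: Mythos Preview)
Your proposal is correct and matches the paper's approach: the paper also uses the jump identity $G_{\lambda+0i}-G_{\lambda-0i}=i\e(\lambda)$ together with the uniqueness part of \Cref{thm: existence of Jost soln} to obtain \cref{eq: me pm rel} and \cref{eq: m1 jump}, and cites the minus-sign case of \Cref{lem: me differentiable} for \cref{eq: diff me m1}. The only nuance is in your derivation of \cref{eq: me pm rel}: the paper proceeds in the opposite direction---it rewrites the integral equation for $m_e(\lambda+0i)$ in terms of $G_{\lambda-0i}$, which immediately shows $m_e(\lambda+0i)$ solves $(I-T_{\lambda-0i})\psi=\Gamma(\lambda)\e(\lambda)$ with $\Gamma$ given by the \emph{first} expression (always well-defined), and then runs the symmetric computation starting from $m_e(\lambda-0i)$ to obtain the second expression; this avoids having to argue separately that $1-\alpha(\lambda)\ne 0$. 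Your route requires that non-vanishing up front, and your justification (``since $\Gamma$ is well-defined by the first expression'') is not quite a proof---what actually makes it work is that if $1-\alpha(\lambda)=0$ then the same manipulation shows $(I-T_{\lambda+0i})m_e(\lambda-0i)=0$, forcing $m_e(\lambda-0i)=0$ by the vanishing lemma, a contradiction. With that one clarification your argument is complete and equivalent to the paper's.
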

\begin{proof}
By \cref{eq: G_lambda decomp},
\eqn
G_{\lambda+0i}(x)-G_{\lambda-0i}(x)=i\e(x,\lambda).
\eeqn
Therefore \cref{eq: me int} implies
\begin{align}
m_e(\lambda+ 0i) &= \e(\lambda)+(G_{\lambda-0i}+i\e(\lambda))*(um_e(\lambda+0i))\notag\\
&= \e(\lambda) + G_{\lambda-0i}*(um_e(\lambda-0i))+ i\e(\lambda)\int_{\real}u(x)m_e(x,\lambda+0i)e^{-i\lambda x}~dx\notag\\
&=\left(1+i\int_{\real}u(x)m_e(x,\lambda+0i)e^{-i\lambda x}~dx\right)\e(\lambda)+G_{\lambda-0i}*(um_e(\lambda-0i)).
\end{align}
By \cref{eq: me int} and uniqueness of Jost solutions, we get
\eqn
m_e(\lambda+ 0i) =\left(1+i\int_{\real}u(x)m_e(x,\lambda+0i)e^{-i\lambda x}~dx\right)m_e(\lambda- 0i).
\eeqn
A similar calculation starting with the integral equation of $m_e(\lambda-0i)$ gives
\eqn
m_e(\lambda- 0i) =\left(1-i\int_{\real}u(x)m_e(x,\lambda-0i)e^{-i\lambda x}~dx\right)m_e(\lambda+ 0i).
\eeqn
This proves \cref{eq: me pm rel}. Take the difference of the integral equations of $m_1(\lambda\pm 0i)$ given in \cref{eq: m1 int} to get
\begin{align}
m_1(\lambda+0i)-m_1(\lambda-0i)=&~i\e(\lambda)\int_{\real}u(x)m_1(x,\lambda+0i)e^{-i\lambda x}~dx\notag\\
&\quad +G_{\lambda-0i}*[u(m_1(\lambda+0i)-m(\lambda-0i))].
\end{align}
By \cref{eq: me int} and uniqueness of Jost solutions, we get
\eqn
m_1(\lambda+0i)-m_1(\lambda-0i) = m_e(\lambda-0i)~i\int_{\real}u(x)m_1(x,\lambda+0i)e^{-i\lambda x}~dx .
\eeqn
This proves \cref{eq: m1 jump}. Finally, \cref{eq: diff me m1} is just the minus sign case of \cref{eq: diff me m1 rel}.
\end{proof}

Our next goal is to establish relations between different scattering coefficients. The following identity proves very useful in showing these relations.

\begin{lem}\label{lem: IBP iden}
Denote $\int_{\real}f(x)\overline{g(x)}~dx$ by $\langle f, g\rangle$. If $f,g\in L^1(\real)\cap L^2(\real)$, $\lambda>0$, then
\eqn
\langle G_{\lambda\pm 0i}*f, g\rangle = i\langle f, \e\rangle \overline{\langle  g, \e\rangle} + \langle f, G_{\lambda\pm 0i}*g\rangle.
\eeqn
\end{lem}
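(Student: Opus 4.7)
The plan is to establish the identity by combining Fubini's theorem with a pointwise symmetry property of the kernel $G_{\lambda\pm 0i}$. The core algebraic input I would isolate first is the observation that
\[
G_{\lambda\pm 0i}(-x) = \overline{G_{\lambda\pm 0i}(x)} \pm ie^{-i\lambda x},
\]
with matching signs on both sides. This follows from the explicit decomposition $G_{\lambda\pm 0i}(x) = \pm ie^{i\lambda x}\chi_{\real^\pm}(x) - \widetilde{G}_\lambda(x)$ in \cref{eq: G_lambda decomp}, together with the easy fact that $\widetilde{G}_\lambda(-x) = \overline{\widetilde{G}_\lambda(x)}$ (which comes from conjugating \cref{def: tilde G} and using that $\xi - \lambda$ is real). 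The nontrivial book-keeping is that the $\chi_{\real^\mp}(x)$ contribution coming from $G_{\lambda\pm 0i}(-x)$ and the $\chi_{\real^\pm}(x)$ contribution from $\overline{G_{\lambda\pm 0i}(x)}$ combine to give a single term $\pm ie^{-i\lambda x}$ supported on all of $\real$.

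With this identity in hand, I would justify Fubini's theorem in
\[
\langle G_{\lambda\pm 0i}*f, g\rangle = \int_\real \int_\real G_{\lambda\pm 0i}(x-y) f(y) \overline{g(x)}~dy~dx
\]
by splitting the kernel into its $L^\infty$ part $\pm ie^{i\lambda x}\chi_{\real^\pm}$ and its $L^2$ part $-\widetilde{G}_\lambda$. The first piece gives absolute integrability on $\real^2$ directly from $f, g \in L^1$; the second piece is handled by Cauchy--Schwarz in $y$ using $f \in L^2$ and $\widetilde{G}_\lambda \in L^2$, yielding an $L^\infty$ function of $x$ that pairs integrably against $g \in L^1$.

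After swapping the order of integration, the substitution $w = y - x$ rewrites the inner integral as $\int_\real G_{\lambda\pm 0i}(-w)\overline{g(y - w)}~dw$. Applying the symmetry identity splits this as $\overline{(G_{\lambda\pm 0i}*g)(y)} \pm i \int_\real e^{-i\lambda w}\overline{g(y-w)}~dw$, and a further substitution $v = y - w$ together with $\overline{e^{i\lambda v}} = e^{-i\lambda v}$ evaluates the second integral as $e^{-i\lambda y}\overline{\langle g, \e\rangle}$. Integrating in $y$ against $f$ then yields $\langle f, G_{\lambda\pm 0i}*g\rangle \pm i\langle f, \e\rangle \overline{\langle g, \e\rangle}$, which is the claim (with the natural reading that the sign of $i$ on the right-hand side tracks the sign of $0i$ on the left, consistent with the way the identity is invoked in \Cref{lem: crucial iden}).

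The only step I expect to require any care is the symmetry identity itself, since the half-line supports of the distributional pieces of $G_{\lambda\pm 0i}$ must combine correctly after reflection and conjugation. Once that pointwise formula is verified, the remaining argument is routine manipulation of convolution integrals and changes of variables.
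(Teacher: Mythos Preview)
Your approach is essentially identical to the paper's: both proofs rest on the kernel symmetry $G_{\lambda\pm 0i}(x) = \overline{G_{\lambda\pm 0i}(-x)} \pm ie^{i\lambda x}$ (equivalently your reflected form), followed by Fubini and an unwinding of the double integral. Your Fubini justification via the $L^\infty + L^2$ splitting of the kernel is slightly more explicit than the paper's one-line remark that ``all of the above calculations are justified if $f,g\in L^1\cap L^2$,'' but the substance is the same.

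One point worth flagging: your observation that the coefficient should read $\pm i$ rather than $+i$ in both cases is correct. The paper only carries out the $+0i$ computation and only ever invokes the lemma with the $+$ sign (in \Cref{lem: crucial iden} and \Cref{lem: rel scattering coeff}), so the discrepancy never bites there; but for the $-0i$ case the statement as written in the paper has a sign slip, and your version is the right one.
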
 
\begin{proof}
We only present the calculation for $G_{\lambda+0i}$. Using \cref{eq: G_lambda decomp}, we see that
\eqn
\overline{G_{\lambda+0i}(-x)}=-ie^{i\lambda x}\chi_{\real^-}(x)-\tilde{G}_{\lambda}(x),
\eeqn
or 
\eqn
G_{\lambda+0i}(x) = \overline{G_{\lambda+0i}(-x)} + ie^{i\lambda x}.
\eeqn
Thus
\begin{align}
&~\langle G_{\lambda+ 0i}*f, g\rangle
=\int_{\mathbb{R}}\int_{\real}G_{\lambda+0i}(x-y)f(y)\overline{g(x)}~dy~dx \notag\\
=&~\int_{\mathbb{R}}\int_\real \left(\overline{G_{\lambda+0i}(y-x)}+ie^{i\lambda(x-y)}\right)f(y)\overline{g(x)}~dy~dx\notag\\
=&~i\int_\real f(y)e^{-i\lambda y}~dy \int_\real \overline{g(x)e^{-i\lambda x}}~dx + \int_\real\int_\real f(y) \overline{G_{\lambda+0i}(y-x)g(x)}~dx~dy\notag\\
=&~i\langle f, \e\rangle \overline{\langle  g, \e\rangle} + \langle f, G_{\lambda+ 0i}*g\rangle.
\end{align}

All of the above calculations are justified if $f,g\in L^1\cap L^2$.
\end{proof}

\begin{lem}\label{lem: rel scattering coeff}
Let $s>s_1>\frac12$, and $u\in L^2_s(\real)$. Define the Jost solutions as above, and let $\Gamma(\lambda)$, $\beta(\lambda)$ and $f(\lambda)$ be defined as in \Cref{lem: def scattering coeff}. Then $\Gamma\in C^{1,\gamma}_{loc}(0,\infty)$, $\beta,f\in C^{0,\gamma}_{loc}(0,\infty)$ for some $0<\gamma<1$, and the following relations hold:
\eqn
|\Gamma(\lambda)|=1,
\eeqn
\eqn\label{eq: f beta rel}
f(\lambda)=\frac{\overline{\beta(\lambda)}}{2\pi i \lambda},
\eeqn
\eqn\label{eq: beta square rel}
|\beta(\lambda)|^2 = 2~\textnormal{Im}\int_\real u(x)m_1(x,\lambda+0i)~dx,
\eeqn
and
\eqn\label{eq: diff Gamma rel}
\partial_{\lambda}\Gamma(\lambda) = \frac{|\beta(\lambda)|^2}{2\pi i\lambda}\Gamma(\lambda).
\eeqn
\end{lem}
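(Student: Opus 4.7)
The overall plan is to deduce all four identities from the integration-by-parts formula of \Cref{lem: IBP iden} combined with the jump relations in \Cref{lem: def scattering coeff} and the $\lambda$-differentiability supplied by \Cref{lem: me differentiable}. The regularity statements are essentially cost-free: $\beta$, $f$, and $\Gamma-1$ are linear pairings of $u\in L^2_s$ against $\lambda$-dependent Jost data, and the hypothesis $s>s_1>1/2$ lets Cauchy--Schwarz in the weighted spaces transfer the H\"older and $C^1$ dependence on $\lambda$ from \Cref{thm: existence of Jost soln} and \Cref{lem: me differentiable} into $C^{0,\gamma}_{loc}$ and $C^{1,\gamma}_{loc}$ regularity for these scalar functions.

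The unimodularity $|\Gamma|=1$ and the identity \cref{eq: beta square rel} both arise by applying \Cref{lem: IBP iden} with identical slots. Taking $f=g=um_e(\lambda+0i)$ and using \cref{eq: me int} to rewrite $G_{\lambda+0i}*(um_e(\lambda+0i))=m_e(\lambda+0i)-\pmb e$, the term $\langle m_e(\lambda+0i),um_e(\lambda+0i)\rangle=\int u|m_e|^2$ is real and cancels upon taking imaginary parts. With $\langle um_e(\lambda+0i),\pmb e\rangle=-i(\Gamma-1)$ from \cref{def: Gamma}, what remains is $2\,\textnormal{Im}\langle um_e(\lambda+0i),\pmb e\rangle=|\langle um_e(\lambda+0i),\pmb e\rangle|^2$, which expands algebraically to $|\Gamma|^2=1$. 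Running the identical argument with $f=g=um_1(\lambda+0i)$, \cref{eq: m1 int}, and the identity $\langle um_1(\lambda+0i),\pmb e\rangle=-i\beta$ from \cref{def: beta} yields \cref{eq: beta square rel}.

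For \cref{eq: f beta rel} I would apply \Cref{lem: IBP iden} to the \emph{mixed} pair $f=um_1(\lambda+0i)$, $g=um_e(\lambda+0i)$. Expanding via \cref{eq: me int} and \cref{eq: m1 int}, the common symmetric piece $\int u m_1(\lambda+0i)\overline{m_e(\lambda+0i)}\,dx$ appears on both sides and cancels, leaving
\begin{equation*}
-\,\overline{\int u\,m_e(\lambda+0i)\,dx} \;-\; i\beta \;=\; i\beta(\overline{\Gamma}-1),
\end{equation*}
i.e.\ $\int u m_e(\lambda+0i)\,dx = i\overline{\beta}\,\Gamma$. Using \cref{eq: me pm rel} to replace $m_e(\lambda+0i)$ by $\Gamma m_e(\lambda-0i)$ and cancelling the nonvanishing factor $\Gamma$ gives $\int u m_e(\lambda-0i)\,dx = i\overline{\beta}$, whence \cref{eq: f beta rel} follows from \cref{def: f}.

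The ODE \cref{eq: diff Gamma rel} comes from differentiating $\Gamma-1 = i\int u\,\overline{\pmb e}(\lambda) m_e(\lambda+0i)\,dx$ in $\lambda$, with differentiation under the integral sign justified by \Cref{lem: me differentiable}. Writing $\overline{\pmb e}m_e(\lambda+0i)=\Gamma\cdot\overline{\pmb e}m_e(\lambda-0i)$ from \cref{eq: me pm rel}, applying the product rule, and inserting \cref{eq: diff me m1} gives
\begin{equation*}
\partial_\lambda\Gamma \;=\; i(\partial_\lambda\Gamma)\!\int\! u\,\overline{\pmb e}m_e(\lambda-0i)\,dx \;+\; i\Gamma f(\lambda)\!\int\! u\,\overline{\pmb e}m_1(\lambda-0i)\,dx.
\end{equation*}
The first integral, call it $B$, satisfies $iB=1-1/\Gamma$ directly by \cref{def: Gamma}; for the second, subtracting the two instances of \cref{eq: m1 int} at $\lambda\pm 0i$ and invoking \cref{eq: m1 jump} evaluates $\int u\,\overline{\pmb e}m_1(\lambda-0i)\,dx$ as $-i\beta/\Gamma$. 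Substitution reduces the equation to $\partial_\lambda\Gamma = (\partial_\lambda\Gamma)(1-1/\Gamma) + \beta f\,\Gamma$, i.e.\ $\partial_\lambda\Gamma = \beta f\,\Gamma$, and the previously established \cref{eq: f beta rel} then yields \cref{eq: diff Gamma rel}. The main bookkeeping difficulty throughout is tracking signs in \Cref{lem: IBP iden} for $G_{\lambda\pm 0i}$ and consistently identifying the integrals $\int u m_*(\lambda\pm 0i)\,e^{-i\lambda x}\,dx$ with $\beta$, $\Gamma-1$, or their conjugates; no new analytic ingredient beyond the tools already assembled is needed.
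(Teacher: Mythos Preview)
Your argument is correct and follows essentially the same route as the paper: the three algebraic identities come from applying \Cref{lem: IBP iden} to the pairs $(um_e(\lambda+),um_e(\lambda+))$, $(um_1(\lambda+),um_e(\lambda+))$, and $(um_1(\lambda+),um_1(\lambda+))$, and the ODE for $\Gamma$ comes from differentiating \cref{def: Gamma} via \Cref{lem: me differentiable}. Two small remarks: (i) in your last display the intermediate right-hand side should read $\beta f$ rather than $\beta f\,\Gamma$ (with that fix the rearrangement $(\partial_\lambda\Gamma)/\Gamma=\beta f$ indeed gives $\partial_\lambda\Gamma=\beta f\,\Gamma$); (ii) the paper obtains the ODE more directly by using the \emph{plus}-sign case of \cref{eq: diff me m1 rel} on $\Gamma=1+i\int u\,\overline{\e}\,m_e(\lambda+0i)\,dx$, which avoids the product-rule detour through $m_e(\lambda-0i)$ and the auxiliary evaluation of $\int u\,\overline{\e}\,m_1(\lambda-0i)\,dx$.
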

\begin{proof}
The regularity of $\Gamma$, $\beta$ and $f$ follows easily from the corresponding regularity of the Jost solutions. What are left to show are the relations between them.
We start by multiplying the integral equation of $m_e(\lambda+)$ by $u\overline{m_e(\lambda+)}$ and integrate on $\real$. Using \Cref{lem: IBP iden}, we have
\begin{align}
&~\langle m_e(\lambda+),um_e(\lambda+)\rangle \notag\\
=&~ \langle \e, um_e(\lambda+)\rangle+\langle G_{\lambda+}*(um_e(\lambda+)), um_e(\lambda+)\rangle\notag\\
=&~ \langle \e, um_e(\lambda+)\rangle + i|\langle um_e(\lambda+,\e)\rangle|^2 + \langle um_e(\lambda+),G_{\lambda+}*(um_e(\lambda+))\rangle\notag\\
=&~\langle \e, um_e(\lambda+)\rangle + i|\langle um_e(\lambda+,\e)\rangle|^2 + \langle um_e(\lambda+),m_e(\lambda+)-\e\rangle.
\end{align}
Since $\langle m_e(\lambda+),um_e(\lambda+)\rangle=\langle um_e(\lambda+),m_e(\lambda+)\rangle$, we get
\eqn\label{eq: ume e}
i|\langle um_e(\lambda+),\e\rangle|^2 -\langle um_e(\lambda+),\e\rangle + \overline{\langle um_e(\lambda+),\e\rangle} =0.
\eeqn
By the definition of $\Gamma(\lambda)$ given in \cref{def: Gamma}, $\Gamma(\lambda) = 1+i\langle um_e(\lambda+),\e\rangle$. Hence \cref{eq: ume e} means
\eqn
i|\Gamma(\lambda)-1|^2 + i (\Gamma(\lambda)-1)+ i \overline{\Gamma(\lambda)-1}=0,
\eeqn
from which it follows that $|\Gamma(\lambda)|=1$. 

Next we multiply the integral equation of $m_1(\lambda+)$ given in \cref{eq: m1 int} by $u\overline{m_e(\lambda+)}$ and integrate on $\real$. Use \Cref{lem: IBP iden} again to get
\begin{align}
&~\langle m_1(\lambda+),um_e(\lambda+)\rangle = \langle 1 +G_{\lambda+0i}*(um_1(\lambda+)), um_e(\lambda+)\rangle\notag\\
=&~\int_\real u\overline{m_e(\lambda+)}~dx + i\langle um_1(\lambda+),\e\rangle \overline{\langle um_e(\lambda+), \e\rangle} \notag\\
&\quad + \langle um_1(\lambda+), G_{\lambda+0i}*(um_e(\lambda+)\rangle\notag\\
=&~\int_\real u\overline{m_e(\lambda+)}~dx + i\langle um_1(\lambda+),\e\rangle \overline{\langle um_e(\lambda+), \e\rangle} \notag\\
&\quad + \langle um_1(\lambda+), m_e(\lambda+)-\e\rangle.
\end{align}
Since $\langle m_1(\lambda+),um_e(\lambda+)\rangle =\langle um_1(\lambda+), m_e(\lambda+)\rangle$, this implies
\eqn
\int_\real u\overline{m_e(\lambda+)}~dx +\langle um_1(\lambda+),\e\rangle\left(i\overline{\langle um_e(\lambda+), \e\rangle}-1\right)=0.
\eeqn
By the definition of $\Gamma(\lambda)$, $\beta(\lambda)$, $f(\lambda)$, and the relation \cref{eq: me pm rel}, we get
\eqn
-2\pi \lambda\overline{f(\lambda)\Gamma(\lambda)}- \frac{1}{i}\beta(\lambda)\overline{\Gamma(\lambda)}=0.
\eeqn
Divide both sides by $\overline{\Gamma(\lambda)}$ to obtain \cref{eq: f beta rel}. This is allowed as $|\Gamma(\lambda)|=1$. 

Next we multiply the integral equation of $m_1(\lambda+)$ by $u\overline{m_1(\lambda+)}$ and integrate on $\real$. Use \Cref{lem: IBP iden} to get
\begin{align}
&~\langle m_1(\lambda+),um_1(\lambda+)\rangle = \langle 1 + G_{\lambda+0i}*(um_1(\lambda+)), um_1(\lambda+)\rangle\notag\\
=&~ \int_\real u\overline{m_1(\lambda+)}~dx + i|\langle um_1(\lambda+), \e\rangle |^2 + \langle um_1(\lambda+), G_{\lambda+0i}*(um_1(\lambda+))\rangle\notag\\
=&~\int_\real u\overline{m_1(\lambda+)}~dx + i|\langle um_1(\lambda+), \e\rangle |^2 + \langle um_1(\lambda+),m_1(\lambda+)-1\rangle
\end{align}
Since $\langle m_1(\lambda+),um_1(\lambda+)\rangle = \langle um_1(\lambda+),m_1(\lambda+)\rangle$, we have
\eqn
-2i~\text{Im}\int_\real um_1(\lambda+)~dx + i|\langle um_1(\lambda+), \e\rangle |^2=0,
\eeqn
from which \cref{eq: beta square rel} follows.

Finally, to get \cref{eq: diff Gamma rel}, we differentiate \cref{def: Gamma} using the plus sign case of \cref{eq: diff me m1 rel} and apply \cref{eq: me pm rel} to get
\begin{align}
&~\partial_{\lambda}\Gamma(\lambda)\notag\\
=&~ i \int_\real u(x) e^{-i\lambda x}\left(-\frac{1}{2\pi \lambda}\int_\real {u(y)m_e(y,\lambda+)}~dy\right)m_1(x,\lambda+)~dx\notag\\
= &f(\lambda)\beta(\lambda)\Gamma(\lambda). 
\end{align}
\Cref{eq: diff Gamma rel} now follows from \cref{eq: f beta rel}.
\end{proof}


\section{Asymptotic behavior near $k=0$}\label{sec: k 0 limit}

In this section, we discuss the asymptotic behavior of the Jost solutions and scattering coefficients as $k$ approaches $0$ within the set $\rho(L_u)\cup(\real^{+}\pm 0i)$. It turns out that the convolution kernel $G_k$ has a logarithmic singularity at $k=0$, and so does the operator $T_k$. We employ the well-known method of subtracting a rank one operator from $T_k$ so that the modified operator has a limit at $k=0$. The limiting modified operator also has the form of identity plus a compact operator. We then obtain its invertibility through a vanishing lemma. The asymptotic behavior of the Jost functions can be recovered from the modified Jost functions. The asymptotics presented in this section was formally obtained in \cite{fokas1983inverse} and \cite{kaup1998inverse}.

Let $\chi(\xi)$ be a smooth function on $[0,\infty)$, which is identically equal to $1$ for $0\le \xi \le 1$ and identically equal to $0$ for $\xi \ge 2$. Later on we will see that it is crucial to allow the possibility of $\chi(\xi)$ being {\it complex} for $1<\xi<2$. For $k\in\rho(L_u)\cup(\real^{+}\pm 0i)$, let
\eqn\label{def: l}
l(k)=\frac1{2\pi}\int_0^{\infty}\frac{\chi(\xi)}{\xi-k}~d\xi,
\eeqn
and let
\begin{align}
G_k^0(x)&=G_{k}(x)-l(k)=\frac1{2\pi}\int_0^{\infty}\frac{e^{ix\xi}-\chi(\xi)}{\xi-k}~d\xi, \label{def: Gk0}\\
T_k^0(\varphi)&=G_k^0*(u\varphi) = T_k(\varphi)-l(k)\langle \varphi, u\rangle. \label{def: Tk0}
\end{align}
We also define 
\eqn\label{def: G00}
G_0^0(x) = \frac{1}{2\pi}\int_0^{\infty}\frac{e^{ix\xi}-\chi(\xi)}{\xi}~d\xi, \quad T_0^0(\varphi) = G_0^0*(u\varphi).
\eeqn
We define the modified Jost functions $m_1^0(x,k)$ and $m_e^0(x,\lambda\pm 0i)$ to be solutions (if exist) to the integral equations
\begin{align}
m_1^0(k)& = 1+T_k^0(m_1^0(k)) = 1+G_k^0*(um_1^0(k)), \label{eq: mod m1}\\
m_e^0(\lambda\pm 0i) &= \e(\lambda)+T_{\lambda\pm 0i}^0(m_e^0(\lambda\pm 0i)) = \e(\lambda)+G_{\lambda\pm 0i}^0*(um_e^0(\lambda\pm 0i)). \label{eq: mod me}
\end{align}
Using \cref{def: Tk0}, we obtain the relation between the original and the modified Jost functions:
\eqn\label{eq: m1 vs m1 mod}
m_1(k)=\frac{m_1^0(k)}{1-l(k)\langle m_1^0(k),u\rangle},
\eeqn
\begin{align}
m_e(\lambda\pm )&=m_e^0(\lambda\pm )+l(\lambda\pm)\langle m_e^0(\lambda\pm ),u\rangle m_1(\lambda\pm )\notag\\
&= \frac{m_e^0(\lambda\pm)+l(\lambda\pm)\big(\langle m_e^0(\lambda\pm),u\rangle m_1^0(\lambda\pm )-\langle m_1^0(\lambda\pm),u\rangle m_e^0(\lambda\pm )\big)}{1-l(\lambda\pm)\langle m_1^0(\lambda\pm ),u\rangle}.\label{eq: me vs me mod}
\end{align}
To prove existence of the modified Jost functions when $k$ is near $0$, we carry out the plan introduced at the beginning of this section. The first step is to estimate the modified convolution kernel $G_k^0$.

\begin{lem}\label{lem: Gk0-G00}
There exists $k_0>0$ such that for every $\epsilon\in (0,1)$, there is $C>0$ such that for all $k\in (\comp\setminus[0,\infty))\cup (\real^+\pm 0i)$ with $|k|<k_0$, 
\eqn\label{est: Gk-G0}
|G_k^0(x)-G_0^0(x)|\le C|k|^{\epsilon}(1+|x|)^{\epsilon}.
\eeqn
\end{lem}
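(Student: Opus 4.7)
The starting point is the telescoping identity
\[
G_k^0(x)-G_0^0(x) \;=\; \frac{k}{2\pi}\int_0^{\infty}\frac{e^{ix\xi}-\chi(\xi)}{\xi(\xi-k)}\,d\xi \;=\; \frac{k}{2\pi}\int_0^{\infty}\frac{\tilde H(\xi)}{\xi-k}\,d\xi,
\]
where $\tilde H(\xi) := (e^{ix\xi}-\chi(\xi))/\xi$ extends smoothly to $\xi=0$ because $\chi\equiv 1$ near $0$ forces the numerator to vanish there; the singularity at $\xi=k$ for $k\in\real^+\pm 0i$ is handled as the usual $\pm i0$ limit. With $R:=1+|x|$, the workhorse pointwise bound I would use is
\[
|\tilde H(\xi)|\;\le\; C\min(R,1/\xi)\;\le\; \frac{C}{\xi+1/R},
\]
immediate from $|e^{ix\xi}-\chi(\xi)|\le C\min(1,R\xi)$ (using $|e^{ix\xi}|=1$, $\|\chi\|_\infty\le C$, and $\chi(0)=1$).

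For $k$ in a complex sector bounded away from $\real^+$, so that $|\xi-k|\ge c(\xi+|k|)$, direct substitution yields
\[
|G_k^0(x)-G_0^0(x)|\;\le\; \frac{C|k|}{2\pi}\int_0^{\infty}\frac{d\xi}{(\xi+1/R)(\xi+|k|)}\;=\; C\,\frac{R|k|\,\log(R|k|)}{R|k|-1},
\]
whose modulus is dominated by $C_\epsilon(R|k|)^\epsilon$ for any $\epsilon\in(0,1)$ (the right-hand side is $O(1)$ at $R|k|=1$, $O(R|k|\log(1/(R|k|)))$ as $R|k|\to 0$, and $O(\log(R|k|))$ as $R|k|\to\infty$, each absorbed by a power of $R|k|$). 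This disposes of the case when $k$ is safely inside $\comp\setminus[0,\infty)$.

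The main obstacle is the regime $k=\lambda\pm 0i$ with $\lambda\in(0,k_0)$, where $|\xi-k|^{-1}$ fails to be integrable near $\xi=\lambda$ and cancellation must be extracted from the smoothness of $\tilde H$. I would introduce a smooth cutoff $\varphi$ equal to $1$ on $[3\lambda/4,5\lambda/4]$ and supported in $[\lambda/2,2\lambda]$ (so its width scales with $\lambda$), and decompose
\[
\int_0^{\infty}\frac{\tilde H(\xi)}{\xi-k}\,d\xi \;=\; \int_0^{\infty}\frac{\tilde H(\xi)-\tilde H(\lambda)\varphi(\xi)}{\xi-k}\,d\xi \;+\; \tilde H(\lambda)\int_0^{\infty}\frac{\varphi(\xi)}{\xi-k}\,d\xi.
\]
The second integral has modulus $O(1)$ uniformly in $\lambda$, since its only potential singularity at $\xi=\lambda$ is a principal-value log over the symmetric interval where $\varphi=1$; multiplied by the outer prefactor $\lambda\tilde H(\lambda)/(2\pi)=H(\lambda)/(2\pi)$ and invoking $|H(\lambda)|\le C\min(1,R\lambda)\le C(R|k|)^\epsilon$ gives the required bound on this piece. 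The first integrand vanishes at $\xi=\lambda$ (removing the singularity) and inherits the decay $|\tilde H|\le C/(\xi+1/R)$ outside $[\lambda/2,2\lambda]$; splitting $(0,\infty)$ at the scales $\lambda/2$, $2\lambda$, and $1/R$ and combining a Lipschitz estimate on $\tilde H$ across the $O(\lambda)$-wide support of $\varphi$ with the tail bound controls its contribution by a logarithm times $R|k|$, hence by $C(R|k|)^\epsilon$ after the elementary interpolation $\min(1,a)\le a^\epsilon$. A final case split on whether $R|k|\le 1$ (where the above estimates apply directly) or $R|k|>1$ (where $(R|k|)^\epsilon\ge 1$ absorbs residual logarithms and crude bounds on $G_k^0$ and $G_0^0$ individually close the argument) assembles the pieces into the stated inequality.
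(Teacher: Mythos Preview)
Your subtraction-of-the-singular-value strategy is a legitimate alternative to the paper's contour-deformation argument, and for $k$ in a sector away from $\real^+$ your integral estimate is clean. But the treatment of $k=\lambda\pm 0i$ with $R|k|>1$ has a genuine gap.

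On the interval where $\varphi\equiv 1$, the Lipschitz bound you invoke gives only $|\tilde H'(\xi)|\lesssim R/\lambda$ (from differentiating $(e^{ix\xi}-1)/\xi$), hence $\bigl|\frac{\tilde H(\xi)-\tilde H(\lambda)}{\xi-\lambda}\bigr|\lesssim R/\lambda$; integrating over an interval of length $\sim\lambda$ and multiplying by the prefactor $\lambda$ yields a contribution of order $R\lambda$, which is \emph{not} controlled by $(R\lambda)^\epsilon$ once $R\lambda>1$. Your proposed patch---bounding $G_k^0$ and $G_0^0$ separately---does not rescue this: each function is individually of size $\sim\log R$ (from the explicit formula for $G_0^0$ and an analogous calculation for $G_{\lambda\pm 0i}^0$), and $\log R$ is not dominated by $(R\lambda)^\epsilon$ when $R\lambda$ is held near $1$ while $\lambda\to 0$ (forcing $R\to\infty$). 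The ``residual logarithms'' here are in $R$ alone, not in the product $R|k|$, so $(R|k|)^\epsilon\ge 1$ does not absorb them.

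The repair is to go beyond the bare Lipschitz estimate near $\xi=\lambda$: write $\tilde H(\xi)-\tilde H(\lambda)$ so as to isolate $e^{ix\lambda}\frac{e^{ix(\xi-\lambda)}-1}{\xi-\lambda}$ and use the two-sided bound $\min(|x|,2/|\xi-\lambda|)$ on that quotient. Splitting the inner integral at $|\xi-\lambda|\sim 1/|x|$ then produces $O(\log(R\lambda))$ rather than $O(R\lambda)$, which does fit under $C_\epsilon(R\lambda)^\epsilon$. Equivalently, you could take the cutoff $\varphi$ of width $\sim 1/R$ rather than $\sim\lambda$ in the regime $R\lambda>1$; this is in effect what the paper's semicircle $\Gamma_2$ of fixed radius $\tfrac12$ (after the change of variable $\xi\mapsto\xi/x$) accomplishes. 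A minor secondary point: your dichotomy ``sector away from $\real^+$'' versus ``$k=\lambda\pm 0i$'' leaves out $k$ with small nonzero imaginary part close to $\real^+$; the second argument extends there without change once you note $|\xi-k|\ge |\xi-\lambda|$, but this should be said.
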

\begin{proof}
By the definition of $\chi(\xi)$, we write
\begin{equation}\label{eq: Gk-G0 split}
G_k^0(x)-G_0^0(x) = \frac1{2\pi}\int_0^1\frac{e^{ix\xi}-1}{\xi}\frac{k}{\xi-k}~d\xi + \frac1{2\pi}\int_1^{\infty}\frac{e^{ix\xi}-\chi(\xi)}{\xi}\frac{k}{\xi-k}~d\xi.
\end{equation}
We first estimate the second term in \cref{eq: Gk-G0 split}. Assuming $|k|<k_0<\frac12$, we obtain $|\xi-k|\ge \xi-|k|\ge \frac12\xi$ for $\xi\ge 1$. Thus
\eqn
\left|\int_1^{\infty}\frac{e^{ix\xi}-\chi(\xi)}{\xi}\frac{k}{\xi-k}~d\xi\right|\le C|k|\int_1^{\infty}\frac1{\xi^2}~d\xi \le C|k|.
\eeqn
We are left to estimate the first term in \cref{eq: Gk-G0 split}. Let us first consider the case $x>0$. Make a change of variable to rewrite the integral as
\eqn\label{eq: Gk-G0 1}
kx\int_0^x \frac{e^{i\xi}-1}{\xi}\frac{1}{\xi-kx}~d\xi
\eeqn
Notice that $k\in (\comp\setminus[0,\infty))\cup (\real^+\pm 0i)$ means $kx$ can get arbitrarily close to the interval $(0,x)$. We deform the contour of integration when estimating \cref{eq: Gk-G0 1}. The work is split into two cases: when $|k|x<1$, or when $|k|x\ge1$. If $|k|x<1$, we split the integral \cref{eq: Gk-G0 1} as follows
\eqn
\int_{\Gamma_1}+\int_{2|k|x}^2+\int_2^x.
\eeqn
Here $\Gamma_1$ is a semicircle centered at $|k|x$ with radius $|k|x$. $\Gamma_1$ is in the lower half plane if $kx$ is in the upper half plane, and vice versa. With this choice, we have $|\xi-kx|\ge|k|x$ and $\left|\frac{e^{i\xi}-1}{\xi}\right|\le C$ when $\xi\in \Gamma_1$. Hence
\eqn
|k|x\left|\int_{\Gamma_1}\frac{e^{i\xi}-1}{\xi}\frac{1}{\xi-kx}~d\xi\right|\le C|k|x\le C|k|^{\epsilon}|x|^{\epsilon}.
\eeqn
We used $|k|x<1$ to get the last inequality. For $2|k|x<\xi<2$, we have $\left|\frac{e^{i\xi}-1}{\xi}\right|\le C$ and $|\xi-kx|\ge \xi-|k|x$. Hence
\begin{align}
|k|x\left|\int_{2|k|x}^2\frac{e^{i\xi}-1}{\xi}\frac{1}{\xi-kx}~d\xi\right|&\le C|k|x\int_{2|k|x}^2\frac{1}{\xi-|k|x}~d\xi\notag\\
&=C|k|x\log\left(\frac{2}{|k|x}-1\right)\notag\\
&\le C|k|^{\epsilon}|x|^{\epsilon}.
\end{align}
For $\xi$ between $2$ and $x$ (either could be the larger of the two), $\left|\frac{e^{i\xi}-1}{\xi}\right|\le \frac{2}{\xi}$, and $|\xi-kx|\ge \xi-|k|x$. Thus
\begin{align}
|k|x\left|\int_2^x\frac{e^{i\xi}-1}{\xi}\frac{1}{\xi-kx}~d\xi\right|&\le C|k|x\left|\int_2^x\frac{1}{\xi(\xi-|k|x)}~d\xi\right|\notag\\
&=C\left|\log\left(\frac{2(1-|k|)}{2-|k|x}\right)\right|\notag\\
&\le C\log[(1+|k|)(1+|k|x)]\notag\\
&\le C|k|^{\epsilon}(1+|x|)^{\epsilon}.
\end{align}
Now let's suppose $|k|x\ge 1$, we split the integral \cref{eq: Gk-G0 1} into the following pieces:
\eqn
\int_0^{\frac12}+\int_{\frac12}^{|k|x-\frac12}+\int_{\Gamma_2}+\int_{|k|x+\frac12}^x.
\eeqn
Here $\Gamma_2$ is a semicircle centered at $|k|x$ with radius $\frac12$. Again, $\Gamma_2$ is in the lower half plane if $kx$ is in the upper half plane, and vice versa. For $0<\xi<\frac12$, $\left|\frac{e^{i\xi}-1}{\xi}\right|\le C$, and $|\xi-kx|\ge |k|x-\xi$. Hence
\begin{align}
|k|x\left|\int_0^{\frac12}\frac{e^{i\xi}-1}{\xi}\frac{1}{\xi-kx}~d\xi\right|&\le C|k|x\int_0^{\frac12}\frac{1}{|k|x-\xi}~d\xi\notag\\
&= C|k|x\log\left(\frac{|k|x}{|k|x-\frac12}\right)\notag\\
&=C|k|x\log\left(1+\frac{1}{2|k|x-1}\right)\notag\\
&\le C|k|x\log\left(1+\frac{1}{|k|x}\right)\le C|k|^{\epsilon}|x|^{\epsilon}.
\end{align}
For $\frac12<\xi<|k|x-\frac12$, $\left|\frac{e^{i\xi}-1}{\xi}\right|\le \frac{2}{\xi}$, and $|\xi-kx|\ge |k|x-\xi$. Hence
\begin{align}
|k|x\left|\int_{\frac12}^{|k|x-\frac12}\frac{e^{i\xi}-1}{\xi}\frac{1}{\xi-kx}~d\xi\right|&\le C|k|x\int_{\frac12}^{|k|x-\frac12}\frac{1}{\xi(|k|x-\xi)}~d\xi\notag\\
&\le C\log(2|k|x-1)\notag\\
&\le C|k|^{\epsilon}|x|^{\epsilon}.
\end{align}
For $\xi\in \Gamma_2$, $\left|\frac{e^{i\xi}-1}{\xi}\right|\le \frac{C}{|\xi|}\le \frac{C}{|k|x-\frac12}$, $|\xi-kx|\ge\frac12$. Thus
\begin{equation}
|k|x\left|\int_{\Gamma_2}\frac{e^{i\xi}-1}{\xi}\frac{1}{\xi-kx}~d\xi\right|\le \frac{C|k|x}{|k|x-\frac12}\le C\le C|k|^{\epsilon}|x|^{\epsilon}.
\end{equation}
Of course we used $|k|x\ge 1$. Finally, for $|k|x+\frac12<\xi<x$, $\left|\frac{e^{i\xi}-1}{\xi}\right|\le \frac{2}{\xi}$, and $|\xi-kx|\ge \xi-|k|x$. Thus
\begin{align}
|k|x\left|\int_{|k|x+\frac12}^{x}\frac{e^{i\xi}-1}{\xi}\frac{1}{\xi-kx}~d\xi\right|&\le C|k|x\int_{|k|x+\frac12}^{x}\frac{1}{\xi(\xi-|k|x)}~d\xi\notag\\
&\le C\log[(1-|k|)(2|k|x+1)]\notag\\
&\le C|k|^{\epsilon}(1+|x|)^{\epsilon}.
\end{align}
This finishes the proof of \cref{est: Gk-G0} when $x>0$. The proof for $x< 0$ is completely analogous. The case $x=0$ is trivial.
\end{proof}

\begin{lem}\label{lem: G00}
\eqn\label{eq: G00 form}
G_0^0(x)=\lim_{N\to \infty}\frac1{2\pi}\int_0^N \frac{e^{ix\xi}-\chi(\xi)}{\xi}~d\xi = -\frac{1}{2\pi}\log|x|+\frac1{2\pi}\begin{cases}c_1 &\text{ if }x>0,\\ c_2 &\text{ if }x<0,\end{cases}
\eeqn
and there is $C>0$ such that
\eqn\label{est: G00 N}
\left|\frac1{2\pi}\int_0^N\frac{e^{ix\xi}-\chi(\xi)}{\xi}~d\xi \right|\le C+C|\log|x||+\frac{C}{|x|^{\frac12}}\chi_{\{|x|\le 1\}}
\eeqn
for all $N>2$.
\begin{proof}
We write $G_0^0$ as
\eqn\label{eq: G00 split}
G_0^0(x) = \frac1{2\pi}\int_0^1\frac{e^{ix\xi}-1}{\xi}~d\xi + \frac1{2\pi}\int_1^{\infty}\frac{e^{ix\xi}}{\xi}~d\xi-\frac1{2\pi}\int_1^{2}\frac{\chi(\xi)}{\xi}~d\xi.
\eeqn
When $x>0$, make a change of variable to get
\begin{align}
G_0^0(x) = &~-\frac1{2\pi}\int_1^x \frac1{\xi}~d\xi+ \frac1{2\pi}\int_0^1\frac{e^{i\xi}-1}{\xi}~d\xi+\frac{1}{2\pi}\int_1^{\infty}\frac{e^{i\xi}}{\xi}~d\xi\notag\\
&\quad -\frac1{2\pi}\int_1^{2}\frac{\chi(\xi)}{\xi}~d\xi\notag\\
=&~-\frac1{2\pi}\log|x|+\frac{c_1}{2\pi}.
\end{align}
When $x<0$, a change of variable gives
\begin{align}
G_0^0(x) = &~-\frac1{2\pi}\int_{-1}^x \frac1{\xi}~d\xi+ \frac1{2\pi}\int_0^{-1}\frac{e^{i\xi}-1}{\xi}~d\xi+\frac{1}{2\pi}\int_{-1}^{-\infty}\frac{e^{i\xi}}{\xi}~d\xi\notag\\
&\quad -\frac1{2\pi}\int_1^{2}\frac{\chi(\xi)}{\xi}~d\xi\notag\\
=&~-\frac1{2\pi}\log|x|+\frac{c_2}{2\pi}.
\end{align}
Next we assume $x>0$, $N>2$, and write the integral in \cref{est: G00 N} as 
\eqn
G_0^0(x) - \frac{1}{2\pi}\int_N^{\infty}\frac{e^{ix\xi}}{\xi}~d\xi=G_0^0(x)-\frac1{2\pi}\int_{Nx}^{\infty}\frac{e^{i\xi}}{\xi}~d\xi,
\eeqn
where $\int_{Nx}^{\infty}\frac{e^{i\xi}}{\xi}~d\xi$ is easily seen to be bounded by
\begin{align}
C+C\log\left(\frac{1}{Nx}\right)\chi_{\{Nx<1\}}&\le C+\frac{C}{|Nx|^{\frac12}}\chi_{\{Nx<1\}}\notag\\
&\le C+\frac{C}{|x|^{\frac12}}\chi_{\{|x|\le 1\}}
\end{align}
for $N>2$. The proof for $x<0$ is similar.
\end{proof}
\end{lem}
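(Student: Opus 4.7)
The plan is to prove the two assertions by reducing everything to elementary one-dimensional integrals via the substitution $\eta=x\xi$ and then exploiting the well-known bounds on the incomplete sine/cosine integrals $\int_a^b \frac{e^{i\eta}}{\eta}\,d\eta$.

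For the closed-form evaluation, I would first split
\eqn
2\pi G_0^0(x)=\int_0^1 \frac{e^{ix\xi}-1}{\xi}\,d\xi +\int_1^\infty \frac{e^{ix\xi}}{\xi}\,d\xi -\int_1^2 \frac{\chi(\xi)}{\xi}\,d\xi,
\eeqn
which makes sense because the first integrand has a removable singularity at $0$, the second is a conditionally convergent oscillatory integral, and the third is a fixed constant independent of $x$. Substituting $\eta=x\xi$ (for $x>0$; for $x<0$ substitute $\eta=-|x|\xi$ and flip the orientation) in the first two integrals gives
\eqn
\int_0^x \frac{e^{i\eta}-1}{\eta}\,d\eta+\int_x^\infty \frac{e^{i\eta}}{\eta}\,d\eta,
\eeqn
and subtracting the value at $x=1$ from the value at general $x>1$ produces precisely $-\int_1^x \frac{d\eta}{\eta}=-\log x$. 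Hence the $x$-dependence of $2\pi G_0^0(x)$ is exactly $-\log|x|$, with two possibly different constants $c_1,c_2$ depending on the sign of $x$; these constants are simply the values of the finite, purely numerical integrals collected from both the $x>0$ and $x<0$ computations.

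For the uniform bound \cref{est: G00 N}, the key observation is that when $N>2$ we have $\chi\equiv 0$ on $[N,\infty)$, so
\eqn
\frac1{2\pi}\int_0^N \frac{e^{ix\xi}-\chi(\xi)}{\xi}\,d\xi =G_0^0(x)-\frac1{2\pi}\int_N^\infty \frac{e^{ix\xi}}{\xi}\,d\xi.
\eeqn
The first term is bounded by $C+C|\log|x||$ by the closed form just obtained. For the tail, the substitution $\eta=x\xi$ yields $\int_{Nx}^\infty \frac{e^{i\eta}}{\eta}\,d\eta$ (with the obvious sign change for $x<0$). When $|Nx|\ge 1$ one integration by parts bounds this uniformly by a constant; when $|Nx|<1$ I split at $\eta=1$, treat $\int_1^\infty$ by the same integration by parts, and estimate $\int_{Nx}^1 \frac{e^{i\eta}-1}{\eta}\,d\eta$ together with $\int_{Nx}^1 \frac{d\eta}{\eta}$ to get a logarithmic bound $C|\log(N|x|)|$. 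Using the elementary inequality $|\log t|\le C t^{-1/2}$ for $0<t<1$ together with $N>2$ then gives the tail bound $C+C/|x|^{1/2}$, and since this regime forces $|x|<1/N<1$ the factor $\chi_{\{|x|\le 1\}}$ appears naturally.

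The argument is almost entirely routine; the only place requiring care is the change of variables in the evaluation step, where one must handle the two cases $x>1$ and $0<x<1$ (resp.\ their negative counterparts) separately to verify that the terms rearrange to give exactly $-\log|x|$ plus a constant, with no additional $x$-dependent pieces left over. This bookkeeping, rather than any analytic difficulty, is the main obstacle.
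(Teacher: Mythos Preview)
Your proposal is correct and follows essentially the same approach as the paper: the same three-term split of $G_0^0$, the same substitution $\eta=x\xi$ to extract $-\log|x|$ plus a sign-dependent constant, and the same decomposition of the truncated integral as $G_0^0(x)$ minus an oscillatory tail $\int_{N|x|}^\infty \frac{e^{i\eta}}{\eta}\,d\eta$, bounded via integration by parts for $N|x|\ge 1$ and via the inequality $|\log t|\le Ct^{-1/2}$ for $N|x|<1$. Your framing of the evaluation step as ``subtract the value at $x=1$'' is a slightly cleaner way to see why the $x$-dependent terms telescope to $-\log|x|$ in both regimes $0<x<1$ and $x>1$, but the computation is identical.
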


The pointwise estimates established in the above lemmas imply estimates on the $L^{\infty}_{-(s-s_1)}$ operator norm of $T_k^0$.

\begin{lem}\label{lem: Tk0-T00}
Let $s>s_1>\frac12$, and $u\in L^2_s(\real)$. Let $k\in (\comp\setminus [0,\infty))\cup(\real^+\pm 0i)\cup\{0\}$. Then all $T_k^0$ are compact on $L^{\infty}_{-(s-s_1)}(\real)$, and there exist $\epsilon\in(0,1)$, $k_0>0$ and $C>0$ such that for all $|k|<k_0$, 
\eqn\label{est: Tk0-T00}
\|T_k^0-T_0^0\|_{L^{\infty}_{-(s-s_1)}\to L^{\infty}_{-(s-s_1)}}\le C|k|^{\epsilon}.
\eeqn
\end{lem}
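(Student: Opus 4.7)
The plan is to decouple the lemma into two independent tasks: the quantitative norm estimate \cref{est: Tk0-T00}, and compactness of $T_k^0$ for every admissible $k$. Compactness of $T_k^0$ for $k\neq 0$ in $\rho(L_u)\cup(\real^+\pm 0i)$ is immediate from \cref{def: Tk0}: $T_k^0 = T_k - l(k)\langle\cdot,u\rangle\cdot 1$, where $T_k$ is compact by \Cref{lem: compact} and the subtracted piece is a bounded rank-one operator (the pairing $\langle\varphi,u\rangle$ is well-defined on $L^\infty_{-(s-s_1)}$ since $s_1>\tfrac12$ makes $w^{-s_1}\in L^2$, so Cauchy--Schwarz gives $|\langle\varphi,u\rangle|\le \|w^{-s_1}\|_2\|w^s u\|_2\|\varphi\|_{L^\infty_{-(s-s_1)}}$). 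Compactness at $k=0$ will be deduced at the end from the norm estimate.

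For \cref{est: Tk0-T00}, I would substitute the pointwise bound of \Cref{lem: Gk0-G00} into the defining convolution:
$$|(T_k^0-T_0^0)\varphi(x)| \;\le\; \int_{\real}|G_k^0(x-y)-G_0^0(x-y)||u(y)\varphi(y)|~dy \;\le\; C|k|^\epsilon\int_{\real}(1+|x-y|)^\epsilon|u(y)\varphi(y)|~dy.$$
The trick is to decouple $x$ and $y$ via the submultiplicative inequality $w(x-y)\le w(x)w(y)$, which gives $(1+|x-y|)^\epsilon \le w^\epsilon(x)w^\epsilon(y)$. Inserting the pointwise bound $|\varphi(y)|\le w^{s-s_1}(y)\|\varphi\|_{L^\infty_{-(s-s_1)}}$ and then multiplying by the target weight $w^{s_1-s}(x)$ to match the output norm yields
$$w^{s_1-s}(x)|(T_k^0-T_0^0)\varphi(x)| \;\le\; C|k|^\epsilon\, w^{s_1-s+\epsilon}(x)\|\varphi\|_{L^\infty_{-(s-s_1)}}\int_{\real}w^{\epsilon+s-s_1}(y)|u(y)|~dy.$$

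Two constraints on $\epsilon$ now appear. Boundedness of the $x$-prefactor requires $s_1-s+\epsilon\le 0$, i.e.\ $\epsilon\le s-s_1$. For the $y$-integral, I would write $u=w^{-s}(w^s u)$ and apply Cauchy--Schwarz to obtain $\int w^{\epsilon+s-s_1}|u|\le \|w^{\epsilon-s_1}\|_2\|w^s u\|_2$; finiteness of the first factor requires $2(s_1-\epsilon)>1$, i.e.\ $\epsilon<s_1-\tfrac12$. Since $s>s_1>\tfrac12$, any $\epsilon\in(0,\min(s-s_1,s_1-\tfrac12))$ satisfies both, giving \cref{est: Tk0-T00}. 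Finally, compactness of $T_0^0$ comes for free: the norm estimate shows that $T_k^0\to T_0^0$ in operator norm as $k\to 0$ along any sequence in the admissible set, and the compact operators form a norm-closed subspace of $\mathcal{L}(L^\infty_{-(s-s_1)})$. There is no substantive obstacle here beyond carefully tracking the weight exponents; \Cref{lem: Gk0-G00} has already absorbed the contour-deformation work.
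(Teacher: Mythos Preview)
Your proof is correct and follows essentially the same route as the paper: compactness for $k\neq0$ via the rank-one perturbation of $T_k$, the norm estimate by inserting \Cref{lem: Gk0-G00} and splitting $w^\epsilon(x-y)\le w^\epsilon(x)w^\epsilon(y)$ with $\epsilon<\min(s-s_1,\,s_1-\tfrac12)$, and compactness at $k=0$ as a norm limit of compacts. The only slip is that you should write $k\neq0$ in $(\comp\setminus[0,\infty))\cup(\real^+\pm 0i)$ rather than $\rho(L_u)\cup(\real^+\pm 0i)$, since \Cref{lem: compact} applies on the larger set (including the negative eigenvalues).
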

\begin{proof}
When $k\ne 0$, $T_k^0$ is compact since it is a rank one perturbation of $T_k$, which was shown to be compact in \Cref{lem: compact}. The compactness of $T_0^0$ follows from \cref{est: Tk0-T00}, which we now show. In fact, by \Cref{lem: Gk0-G00}
\begin{align}
&~w^{s_1-s}(x)|T_k^0(\varphi)(x)-T_0^0(\varphi)(x)|\notag\\
\le &~w^{s_1-s}(x)\int_{\real}|G_k^0(x-y)-G_0^0(x-y)||u(y)\varphi(y)|~dy\notag\\
\le &~Cw^{s_1-s}(x)\int_{\real}|k|^{\epsilon}w^{\epsilon}(x-y)w^{-p_2}(y)|u_2(y)|\|\psi\|_{L^{\infty}_{-(s-s_1)}}~dy\notag\\
\le &~C|k|^{\epsilon}\|u_2\|_1\|\psi\|_{L^{\infty}_{-(s-s_1)}},
\end{align}
where $p_2 = \frac12(s_1-\frac12)>0$ and $u_2 = uw^{s-s_1+p_2}\in L^1$. To get the last step above, we used $w(x-y)\le w(x)w(y)$, and $\epsilon<\min(s-s_1,p_2)$.
\end{proof}

The key to proving existence of modified Jost functions is to show invertibility of $I-T_0^0$, which by \Cref{lem: Tk0-T00},  reduces to showing triviality of its kernel. We accomplish this in several steps. First we show an identity that is crucial for later developments. It is for this identity that the complexity of $\chi(\xi)$ is needed. Recall that $\chi(\xi)$ is the cutoff function in the definition of $G_0^0$ and $T_0^0$ in \cref{def: G00}.

\begin{lem}\label{lem: vanish int 1}
Suppose $s>s_1>\frac12$, $u\in L^2_s(\real)$, $\varphi\in L^{\infty}_{-(s-s_1)}(\real)$, and $\varphi = T_0^0\varphi$. If
\eqn\label{eq: chi nonzero im}
\text{Im}\int_1^2\frac{\chi(\xi)}{\xi}~d\xi\ne 0,
\eeqn
then
\eqn\label{eq: vanish int 1}
\int_\real u(y)\varphi(y)~dy=0.
\eeqn
\end{lem}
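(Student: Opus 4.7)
The plan is to mimic the identity-and-cancellation strategy used in \Cref{lem: crucial iden}, but with the kernel $G_0^0$ in place of $G_{\lambda\pm 0i}$. The key observation is that the complex-valued cutoff $\chi$ produces an asymmetry between $G_0^0(x)$ and $\overline{G_0^0(-x)}$ that is exactly a nonzero constant when \cref{eq: chi nonzero im} holds. More precisely, directly from the definition \cref{def: G00},
\begin{align*}
G_0^0(x)-\overline{G_0^0(-x)}
&=\frac{1}{2\pi}\int_0^{\infty}\frac{\overline{\chi(\xi)}-\chi(\xi)}{\xi}~d\xi
=-\frac{i}{\pi}\,\text{Im}\int_1^2\frac{\chi(\xi)}{\xi}~d\xi
=:C_\chi,
\end{align*}
which is independent of $x$ and, by hypothesis, nonzero. (This identity may also be read off from the explicit formula \cref{eq: G00 form}.)

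Once this is in hand, I would repeat the proof of \Cref{lem: IBP iden} with $G_0^0$ in place of $G_{\lambda\pm 0i}$. Namely, writing $G_0^0(x-y)=\overline{G_0^0(y-x)}+C_\chi$ and applying Fubini,
\begin{equation*}
\langle G_0^0*f,g\rangle=C_\chi\,\langle f,1\rangle\,\overline{\langle g,1\rangle}+\langle f,G_0^0*g\rangle
\end{equation*}
for any $f,g$ with enough decay. The Fubini step is the one point that needs checking: $G_0^0$ has a logarithmic singularity at the origin and grows like $\log|x|$ at infinity, so I need $f$ and $g$ (to be taken equal to $u\varphi$) to satisfy $(1+\log|\cdot|)fg\in L^1(\real\times\real)$. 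Under the hypotheses $u\in L^2_s$ and $\varphi\in L^\infty_{-(s-s_1)}$ with $s>s_1>\tfrac12$, one has $u\varphi\in L^2_{s_1}$, and the weight $(1+|x|)^{s_1}$ with $s_1>\tfrac12$ gives $u\varphi\in L^1$ together with $|x|^{s_1-1/2-\epsilon}u\varphi\in L^1$; this is more than enough to absorb the $\log$ factors on both sides of Fubini.

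Taking $f=g=u\varphi$ in the resulting identity and using $\varphi=T_0^0\varphi=G_0^0*(u\varphi)$ twice gives
\begin{equation*}
\langle\varphi,u\varphi\rangle=\langle T_0^0\varphi,u\varphi\rangle=C_\chi\,|\langle u\varphi,1\rangle|^2+\langle u\varphi,T_0^0\varphi\rangle=C_\chi\,\Bigl|\int_\real u\varphi\,dx\Bigr|^2+\langle u\varphi,\varphi\rangle.
\end{equation*}
Since $u$ is real-valued, $\langle\varphi,u\varphi\rangle=\int u|\varphi|^2\,dx=\langle u\varphi,\varphi\rangle$, so the two inner products cancel and we are left with $C_\chi\,|\int u\varphi\,dx|^2=0$. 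The hypothesis \cref{eq: chi nonzero im} guarantees $C_\chi\ne 0$, giving \cref{eq: vanish int 1}.

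The main obstacle I anticipate is the bookkeeping for Fubini, since $G_0^0$ is neither in $L^\infty+L^{p'}$ (as $G_{\lambda\pm 0i}$ was in \Cref{sec: existence}) nor even in any fixed $L^p$: both ends of the real line contribute a genuine logarithmic divergence. Everything else is a formal manipulation analogous to the $\lambda>0$ case, with the favorable feature that no boundary term in Fourier space needs to be tracked: the whole content of the lemma is packaged into the constant $C_\chi$, whose nonvanishing is precisely engineered by allowing $\chi$ to be complex on $(1,2)$.
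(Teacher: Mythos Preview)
Your proposal is correct and follows essentially the same route as the paper: both derive the symmetry relation $G_0^0(x)=\overline{G_0^0(-x)}+C_\chi$ with $C_\chi=-\frac{i}{\pi}\,\text{Im}\int_1^2\frac{\chi(\xi)}{\xi}\,d\xi$, then pair $\varphi=T_0^0\varphi$ against $u\varphi$ and use $\langle\varphi,u\varphi\rangle=\langle u\varphi,\varphi\rangle$ to isolate $C_\chi|\langle u\varphi,1\rangle|^2=0$. The paper carries out the Fubini justification in more detail (splitting the $\log|x-y|$ kernel at $|x-y|=1$), but you have correctly identified this as the only nontrivial technical point.
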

\begin{proof}
By \cref{def: G00},
\eqn
G_0^0(x) = \overline{G_0^0(-x)}-\frac{i}{\pi}~\text{Im}\int_1^2\frac{\chi(\xi)}{\xi}~d\xi.
\eeqn
Therefore by $\varphi=T_0^0\varphi = G_0^0*(u\varphi)$,
\begin{align}
&~\langle \varphi, u\varphi\rangle = \langle G_0^0*(u\varphi) ,u\varphi\rangle =\int_\real \int_\real G_0^0(x-y) u(y)\varphi(y)u(x)\overline{\varphi(x)}~dy~dx\notag\\
= &~\int_\real \int_\real \left(\overline{G_0^0(y-x)}-\frac{i}{\pi}~\text{Im}\int_1^2\frac{\chi(\xi)}{\xi}~d\xi\right) u(y)\varphi(y)u(x)\overline{\varphi(x)}~dy~dx\notag\\
= &~-\frac{i}{\pi}\left(\text{Im}\int_1^2\frac{\chi(\xi)}{\xi}~d\xi\right)|\langle \varphi, u\rangle|^2 + \int_\real\int_\real u(y)\varphi(y) \overline{G_0^0(y-x)u(x)\varphi(x)}~dx~dy\notag\\
= &~-\frac{i}{\pi}\left(\text{Im}\int_1^2\frac{\chi(\xi)}{\xi}~d\xi\right)|\langle \varphi, u\rangle|^2 + \langle u\varphi, G_0^0*(u\varphi)\rangle\notag\\
 = &~-\frac{i}{\pi}\left(\text{Im}\int_1^2\frac{\chi(\xi)}{\xi}~d\xi\right)|\langle \varphi, u\rangle|^2 + \langle u\varphi, \varphi\rangle. \label{eq: vanish int 2}
\end{align}
Since $\langle \varphi, u\varphi\rangle = \langle u\varphi, \varphi\rangle$, \cref{eq: vanish int 2} and \cref{eq: chi nonzero im} imply $\langle \varphi, u\rangle = 0$, which is \cref{eq: vanish int 1}. We provide the estimates needed to apply Fubini's theorem in the calculation above. In fact, by \cref{eq: G00 form},
\begin{align}
&~\int_\real\int_\real |G_0^0(x-y)||u(y)\varphi(y)||u(x)\varphi(x)|~dy~dx\notag\\
\le &~C\|u\varphi\|_1^2 + C\int_\real\int_\real |\log|x-y|||u(y)\varphi(y)||u(x)\varphi(x)|~dy~dx.\label{eq: fubini split 1}
\end{align}
We split the integral $\int_\real |\log|x-y|||u(y)\varphi(y)|~dy$ at $|x-y|=1$ and estimate
\eqn
\int_{|x-y|<1}|\log|x-y|||u(y)\varphi(y)|~dy \le \|\chi_{\{|x|<1\}}\log x\|_2\|u\varphi\|_2,
\eeqn
and
\begin{align}
\int_{|x-y|>1}\log|x-y||u(y)\varphi(y)|~dy&\le \int_{|x-y|>1}(1+|x-y|)^{\epsilon}|u(y)\varphi(y)|~dy\notag\\
&\le (1+|x|)^{\epsilon}\int_\real(1+|y|)^{\epsilon}w^{-s_1}(y)|u_1(y)\varphi_1(y)|~dy\notag\\
&\le C\|u_1\|^2 (1+|x|)^{\epsilon}.
\end{align}
Here $u_1 = w^su\in L^2$, $\varphi_1=w^{-(s-s_1)}\varphi\in L^{\infty}$, and $\epsilon>0$ is chosen so that $s_1-\epsilon>\frac12$. The estimates above imply the finiteness of \cref{eq: fubini split 1}.
\end{proof}

The key vanishing integral \cref{eq: vanish int 1} implies the following decay estimate for functions in the kernel of $I-T_0^0$.

\begin{lem}
Suppose $s>s_1>\frac12$, $u\in L^2_s(\real)$ and that \cref{eq: chi nonzero im} holds. If $\varphi\in L^{\infty}_{-(s-s_1)}(\real)$, and $\varphi = T_0^0\varphi$, then there exists $C=C( u,s,s_1)$ such that
\eqn\label{est: T00 phi decay}
|\varphi(x)|\le Cw^{-1}(x).
\eeqn
In particular $\varphi\in L^2(\real)$.
\end{lem}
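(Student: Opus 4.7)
The plan is to extract an explicit representation of $\varphi$ by combining the structure of $G_0^0$ from \Cref{lem: G00} with the vanishing condition \cref{eq: vanish int 1}, and then to bootstrap decay in the style of \Cref{lem: decay estimate}. Substituting $G_0^0(x-y)=-\tfrac{1}{2\pi}\log|x-y|+\tfrac{c_1}{2\pi}\chi_{y<x}+\tfrac{c_2}{2\pi}\chi_{y>x}$ into $\varphi=G_0^0*(u\varphi)$, using $\chi_{y<x}+\chi_{y>x}=1$ together with $\int u\varphi\,dy=0$ to eliminate the constant term, and then writing $\log|x-y|=\log|x|+\log|1-y/x|$ to apply the vanishing condition a second time, one obtains
\[
\varphi(x)=-\frac{1}{2\pi}\int_\real \log\!\Bigl|1-\frac{y}{x}\Bigr|\,u(y)\varphi(y)\,dy+\frac{c_1-c_2}{2\pi}F(x),
\]
where $F(x):=\int_{-\infty}^x u\varphi\,dy$, which satisfies $F(\pm\infty)=0$ by the vanishing condition. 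The point of this representation is that the two logarithmic features of $G_0^0$ — the $\log|x|$ growth at infinity and the dependence on the sign of its argument — have been converted into quantities that actually decay.

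The bootstrap runs as follows. Suppose at some stage $|\varphi(y)|\le Cw(y)^{-r}$; the hypothesis $\varphi\in L^\infty_{-(s-s_1)}$ furnishes the initial bound with $r_0:=s_1-s$. I claim this upgrades to $|\varphi(x)|\le Cw(x)^{-r'}$ with $r'=\min(1,\,r+s-\tfrac{1}{2}-\epsilon)$ for arbitrarily small $\epsilon>0$. Since $s>\tfrac{1}{2}$, each application strictly increases $r$ by a fixed positive amount, so after finitely many iterations $r\ge 1$, proving the lemma; $\varphi\in L^2(\real)$ is then immediate since $w^{-2}$ is integrable.

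To carry out one bootstrap step, the tail $F(x)$ is estimated directly by Cauchy--Schwarz: using $|\varphi(y)|\le Cw^{-r}(y)$ and $u\in L^2_s$,
\[
|F(x)|\le C\|w^{-r-s}\chi_{|y|>|x|}\|_2\|w^s u\|_2\le Cw^{-r-s+1/2}(x),
\]
valid as long as $r+s>\tfrac{1}{2}$, which holds initially by $s_1>\tfrac{1}{2}$ and is preserved under iteration. For the log integral, split at $|y|=|x|/2$. On $\{|y|\le|x|/2\}$ use $|\log|1-y/x||\le C|y|/|x|$ to get $\tfrac{C}{|x|}\|w^{1-r-s}\chi_{|y|\le|x|/2}\|_2\|w^s u\|_2$; if $r+s>3/2$ this is already $O(|x|^{-1})$, otherwise the truncated weight has norm $\le C|x|^{3/2-r-s+\epsilon}$, and after division by $|x|$ one obtains $Cw^{-(r+s-1/2-\epsilon)}(x)$. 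On $\{|y|>|x|/2\}$ use $|\log|1-y/x||\le C(1+|\log|x-y||+\log|x|)$; the near-diagonal piece $|x-y|<1$ is bounded using $\|\log\chi_{|\cdot|<1}\|_2\cdot\|u\varphi\chi_{|y-x|<1}\|_2$ together with the pointwise decay of $u$ near $x$, which gives $\|u\chi_{|y-x|<1}\|_2\le Cw^{-s}(x)$ and hence an $O(w^{-r-s}(x))$ contribution, while the remaining tail is handled by Cauchy--Schwarz against $w^s u\in L^2$, yielding $Cw^{-(r+s-1/2-\epsilon)}(x)$ with $\epsilon$ absorbing the logarithmic factors.

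The main obstacle is the dual role played by the logarithm in $G_0^0$: its growth at infinity is borderline — just logarithmic, not any negative power — so naive convolution estimates never give decay, and the vanishing integral \cref{eq: vanish int 1} must be invoked in a precise way (producing $\log|1-y/x|$ in place of $\log|x-y|$) to convert growth into decay, while the $1/|x|$ factor thus exposed only pays off once the bootstrap has raised $r$ high enough for $yu\varphi$ to be integrable; these two threads are what force the iterative structure and the use of the $\epsilon$-cushioned Cauchy--Schwarz bounds above.
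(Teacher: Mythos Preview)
Your argument is correct and follows essentially the same route as the paper: use \cref{eq: vanish int 1} to strip out both the sign-dependent constants (leaving the tail $F(x)$) and the $\log|x|$ piece of the logarithm (leaving $\log|1-y/x|$), then bootstrap decay by splitting near and away from $x$. Your presentation packages this a bit more symmetrically—writing a single global formula in $\log|1-y/x|$ and $F(x)$ rather than treating $x>0$ and $x<0$ and the two half-line integrals separately as the paper does—but the analytic content (the two uses of the vanishing integral, the near/far splitting of the log term, and the fixed-gain bootstrap with increment governed by $s-\tfrac12$) is the same.
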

\begin{proof}

By \cref{eq: G00 form},
\begin{align}
&~\varphi(x) = T_0^0\varphi(x)=\frac1{2\pi}\int_\real G_0^0(x-y)u(y)\varphi(y)~dy\notag\\
=&~ \frac{1}{2\pi}\int_{-\infty}^x(c_1-\log(x-y))u(y)\varphi(y)~dy \notag\\
&\quad +\frac{1}{2\pi}\int_x^{\infty}(c_2-\log(y-x))u(y)\varphi(y)~dy\notag\\
=&~- \frac{1}{2\pi}\int_x^{\infty}\log(y-x)u(y)\varphi(y)~dy \label{eq: T00 phi split}\\
  &\quad -\frac{1}{2\pi}\int_{-\infty}^x\log(x-y)u(y)\varphi(y)~dy + R(x). \notag
\end{align}
By \cref{eq: vanish int 1}, $R(x)$ can be written in two ways:
\eqn\label{eq: R(x)}
R(x) = \frac1{2\pi}\int_x^{\infty}(c_2-c_1)u(y)\varphi(y)~dy = \frac1{2\pi}\int_{-\infty}^x(c_1-c_2)u(y)\varphi(y)~dy.
\eeqn

We now start a bootstrap argument, assuming $\varphi\in L^{\infty}_{r}$ for some $r\ge -(s-s_1)$. We have $u=w^{-s}u_1$ for some $u_1\in L^2$ and $\varphi = w^{-r}\varphi_1$ for some $\varphi_1\in L^{\infty}$. Hence $u\varphi=w^{-(r+s)}u_1\varphi_1$. Since $r\ge -(s-s_1)$, $r+s\ge s_1>\frac12$. Letting $p_2= \frac12(s_1-\frac12)>0$, we get 
\eqn\label{eq: u phi}
u\varphi = w^{-(r+\Delta r)}w^{-\frac12-p_2}u_1\varphi_1 = w^{-(r+\Delta r)}u_2\varphi_1.
\eeqn
Here 
\eqn
\Delta r=s-s_1+p_2>p_2>0, 
\eeqn
\eqn
r+\Delta r =r+s-s_1+p_2\ge p_2>0,
\eeqn
and $u_2=w^{-\frac12-p_2}u_1\in L^1\cap L^2$.

We assume $x>0$. Use the first expression in \cref{eq: R(x)} for $R(x)$ to get
\eqn
|R(x)|\le C\int_x^{\infty}w^{-(r+\Delta r)}(y)|u_2(y)\psi_1(y)|~dy\le Cw^{-(r+\Delta r)}(x).
\eeqn
Next we write the first integral in \cref{eq: T00 phi split} as
\eqn
\int_x^{\infty}\log(y-x)w^{-(r+\Delta r)}(y)u_2(y)\varphi_1(y)~dy
\eeqn
and split the integral at $x+1$:
\begin{align}
&~\left|\int_x^{x+1}\log(y-x)w^{-(r+\Delta r)}(y)u_2(y)\varphi_1(y)~dy\right|\notag\\
\le&~ Cw^{-(r+\Delta r)}(x)\big\|\chi_{\{|y|\le 1\}}\log y \big\|_2\|u_2\varphi_1\|_2
\end{align}
When $y-x>1$, there exists $C=C(\epsilon)$ for every $\epsilon>0$ such that $\log(y-x)\le Cw^{\epsilon}(x-y)\le Cw^{\epsilon}(x)w^{\epsilon}(y)$. Take $\epsilon=\frac{p_2}{4}$. We have $r+\Delta r -\epsilon \ge p_2-\epsilon>0$ and $\Delta r >p_2>4\epsilon$. Thus
\begin{align}
&~\left|\int_{x+1}^{\infty}\log(y-x)w^{-(r+\Delta r)}(y)u_2(y)\varphi_1(y)~dy\right|\notag\\
\le &~Cw^{\epsilon}(x)\int_{x+1}^{\infty}w^{\epsilon-(r+\Delta r)}(y)|u_2(y)\varphi_1(y)|~dy\notag\\
\le &~Cw^{2\epsilon - (r+\Delta r)}(x)\|u_2\varphi_1\|_1\le Cw^{-(r+\frac{\Delta r}{2})}(x).
\end{align}
In summary, the first integral in \cref{eq: T00 phi split} is bounded as follows:
\eqn
\left|\int_x^{\infty}\log(y-x)u(y)\varphi(y)~dy\right|\le Cw^{-(r+\frac{\Delta r}{2})}(x).
\eeqn
We now focus on the second integral in \cref{eq: T00 phi split}. When $0<x<1$, we split the integral at $-1$ and estimate
\eqn
\left|\int_{-\infty}^{-1}\log(x-y)u(y)\varphi(y)~dy\right|\le C\int_{-\infty}^{-1}\log(1+|y|)|u(y)\varphi(y)|~dy\le C,
\eeqn
\eqn
\left|\int_{-1}^{x}\log(x-y)u(y)\varphi(y)~dy\right|\le C\big\|\chi_{\{|y|\le 2\}}\log y\big\|_2\|u\varphi\|_2.
\eeqn
When $x>1$, we use \cref{eq: vanish int 1} again to rewrite the second integral in \cref{eq: T00 phi split} as
\begin{align}\label{eq: T00 phi split 2}
&~\int_{-\infty}^x(\log(x-y)-\log x)w^{-(r+\Delta r)}(y)u_2(y)\varphi_1(y)~dy\notag\\
&\quad -\log x \int_{x}^{\infty}w^{-(r+\Delta r)}(y)u_2(y)\varphi_1(y)~dy.
\end{align}
The last term in \cref{eq: T00 phi split 2} is easily seen to be bounded by
\eqn
( \log x)w^{-(r+\Delta r)}(x)\|u_2\varphi_1\|_1\le Cw^{-(r+\frac{\Delta r}{2})}(x).
\eeqn
We split the first integral in \cref{eq: T00 phi split 2} at $\frac x2$, and estimate as follows. When $y<\frac x2$, we have
\eqn
\left|\log\left(1-\frac yx\right)\right|\le C\left|\frac y x\right|^{p},
\eeqn
where $p=\min(1,r+\frac{\Delta r}2)$. Thus
\begin{align}
&~\left|\int_{-\infty}^{\frac x2}\log\left(1-\frac yx\right)w^{-(r+\Delta r)}(y)u_2(y)\varphi_1(y)~dy\right|\notag\\
\le &~ C|x|^{-p}\int_{-\infty}^{\frac x2}|y|^pw^{-(r+\Delta r)}(y)|u_2(y)\varphi_1(y)|~dy \notag\\
\le &~C    w^{-p}(x)\|u_2\varphi_1\|_1.
\end{align}
To estimate the $y>\frac x2$ piece of the first integral in \cref{eq: T00 phi split 2}, we use the first expression for $u\varphi$ in \cref{eq: u phi}, and get
\begin{align}
&~\left|\int_{\frac x2}^x \log\left(1-\frac yx\right)w^{-(r+\Delta r)-(\frac12+p_2)}(y)u_1(y)\varphi_1(y)~dy\right|\notag\\
\le &~Cw^{-(r+\Delta r)-(\frac12+p_2)}(x)\left(\int_{\frac x2}^x \left|\log\left(1-\frac yx\right)\right|^2~dy\right)^{\frac12}\|u_1\varphi_1\|_2\notag\\
\le &~Cw^{-(r+\Delta r)-(\frac12+p_2)}(x)\left(\int_0^{\frac 12} \left|\log z\right|^2~dz\right)^{\frac12}x^{\frac12}\notag\\
\le &~Cw^{-(r+\Delta r)}(x).
\end{align}
This completes the estimation of \cref{eq: T00 phi split} when $x>0$. The arguments for $x<0$ are completely analogous, as long as one uses the second expression in \cref{eq: R(x)} for $R(x)$. In summary, we get from the above estimates that $\varphi\in L^{\infty}_{r+\frac{\Delta r}2}$ if $r+\frac{\Delta r}2<1$ and $\varphi\in L^{\infty}_1$ if $r+\frac{\Delta r}2\ge 1$. The result thus follows from finitely many iterations of the above estimates.
\end{proof}

Next we show that any function in the kernel of $I-T_0^0$ satisfies the same type of eigenvalue equation as do the Jost functions, regardless of the choice of $\chi(\xi)$.
\begin{lem}\label{lem: diff T00}
Let $s>s_1>\frac12$, and $u\in L^2_s(\real)$. If $\varphi\in L^{\infty}_{-(s-s_1)}(\real)$ satisfies $\varphi = T_0^0\varphi$, then
\eqn\label{eq: diff T00}
\frac1i\partial_x\varphi - C_+(u\varphi)=0
\eeqn
in the sense of tempered distributions.
\end{lem}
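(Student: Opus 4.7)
The plan is to Fourier transform the integral equation $\varphi = G_0^0*(u\varphi)$ and reduce everything to the single distributional identity
\begin{equation*}
\xi\widehat{G_0^0}(\xi) = \chi_{\real^+}(\xi) \quad \text{in } \mathcal{S}'(\real).
\end{equation*}
Once this is in hand the conclusion is immediate: the hypotheses on $u$ and $\varphi$ give $u\varphi\in L^1\cap L^2$ as in the proofs of \Cref{lem: equiv int diff} and \Cref{lem: compact}, so $\widehat{u\varphi}$ is a bounded continuous function. Fourier transforming $\varphi = G_0^0*(u\varphi)$ yields $\hat\varphi = \widehat{G_0^0}\,\widehat{u\varphi}$, and multiplication by $\xi$ gives $\xi\hat\varphi = \chi_{\real^+}\widehat{u\varphi} = \widehat{C_+(u\varphi)}$, which inverts to $\frac{1}{i}\partial_x\varphi = C_+(u\varphi)$ in $\mathcal{S}'(\real)$.

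To establish the Fourier-side identity I would regularize by letting $k$ approach $0$ from the upper half plane. For such $k$, the function $\chi_{\real^+}(\xi)/(\xi-k)$ lies in $L^2\cap L^\infty$ and is literally $\widehat{G_k}$; since $l(k)$ is a constant one obtains
\begin{equation*}
\widehat{G_k^0}(\xi) = \frac{\chi_{\real^+}(\xi)}{\xi-k} - 2\pi l(k)\,\delta(\xi).
\end{equation*}
The crucial point is that multiplication by $\xi$ kills the $\delta$ term via $\xi\delta(\xi)=0$, leaving $\xi\widehat{G_k^0}(\xi) = \xi\chi_{\real^+}(\xi)/(\xi-k)$, which visibly converges to $\chi_{\real^+}(\xi)$ in $\mathcal{S}'$ as $k\to 0$. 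On the physical side \Cref{lem: Gk0-G00} provides the pointwise bound $|G_k^0(x)-G_0^0(x)|\le C|k|^\epsilon(1+|x|)^\epsilon$, which combined with the polynomial control on $G_0^0$ from \Cref{lem: G00} gives $G_k^0\to G_0^0$ in $\mathcal{S}'$ by dominated convergence, hence $\xi\widehat{G_k^0}\to\xi\widehat{G_0^0}$ in $\mathcal{S}'$. Identifying the two limits delivers $\xi\widehat{G_0^0} = \chi_{\real^+}$.

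The main obstacle is the careful handling of the tempered-distribution limit $k\to 0$, given that $G_0^0$ itself is only logarithmically singular at $0$ and logarithmically growing at infinity, while its Fourier transform is genuinely a distribution. A streamlined alternative, should the regularization route become cumbersome, is to work directly with the frequency-truncated kernel $G_0^{0,N}(x) = (2\pi)^{-1}\int_0^N (e^{ix\xi}-\chi(\xi))\xi^{-1}\,d\xi$: differentiation under the integral sign yields $\tfrac{1}{i}(G_0^{0,N})'(x) = (e^{iNx}-1)/(2\pi i x)$, whose distributional limit as $N\to\infty$ is $C_+\delta$ by a standard Plemelj-type computation, and $G_0^{0,N}\to G_0^0$ in $\mathcal{S}'$ by \Cref{lem: G00}; this delivers $\frac{1}{i}\partial_x G_0^0 = C_+\delta$ directly, and convolving with $u\varphi$ gives the desired identity $\frac{1}{i}\partial_x\varphi = C_+(u\varphi)$.
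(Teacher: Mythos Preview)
Your Route 2 is essentially the paper's proof. The one step you leave implicit, in both routes, is the passage from a kernel-level identity to the full convolution with $u\varphi$. In Route 1 the product $\widehat{G_0^0}\,\widehat{u\varphi}$ is not a priori defined in $\mathcal{S}'$, since $\widehat{G_0^0}$ is a genuine distribution near $\xi=0$ (of the form $\mathrm{p.v.}\,\chi_{\real^+}/\xi$ plus a multiple of $\delta$) while $\widehat{u\varphi}$ is merely continuous; you would do better to compute $\xi\widehat{\varphi_k}$ for $\varphi_k := G_k^0*(u\varphi)$ and let $k\to 0$ without ever writing $\widehat{G_0^0}\,\widehat{u\varphi}$. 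In Route 2 the implication $\frac{1}{i}\partial_x G_0^0 = C_+\delta \Rightarrow \frac{1}{i}\partial_x(G_0^0*(u\varphi)) = C_+(u\varphi)$ likewise needs care, because $u\varphi\notin\mathcal{S}$ and the standard convolution calculus for tempered distributions does not directly apply.

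The paper handles this by never separating the kernel from $u\varphi$: it pairs $\varphi$ with a test function $\psi'\in C_0^\infty$, replaces $G_0^0$ by the truncation $G_0^{0,N}$ using the uniform bound of \Cref{lem: G00} and dominated convergence, applies Fubini to bring the $x$-integration inside (where $\int\psi' = 0$ kills the $\chi(\xi)$ term and integration by parts pulls down a factor of $\xi$), and then identifies the $N\to\infty$ limit as $-i\int\psi\,C_+(u\varphi)\,dx$ via $L^2$ convergence of $\frac{1}{2\pi}\int_0^N e^{ix\xi}\widehat{u\varphi}(\xi)\,d\xi$ to $C_+(u\varphi)$. This is precisely your Route 2 with the final convolution step unpacked at the level of test functions.
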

\begin{proof}
Let $\psi(x)$ be any test function in $C_0^{\infty}(\real)$. Let $M>1$ be such that $[-M,M]$ contains the support of $\psi$. There exists $C=C(M,\psi)$ such that
\begin{align}
&~\int_{\real}\int_{\real}\left|1+|\log|x-y||+\frac{1}{|x-y|^{\frac12}}\chi_{\{|x-y|\le 1\}}\right||u(y)\varphi(y)\psi'(x)|~dy~dx\notag\\
\le &~C\int_{\real}\int_{-M}^M \left(1+|\log|x-y||+\frac{1}{|x|^{\frac12}}\right)~dx ~|u(y)\varphi(y)|~dy\notag\\
\le &~C\int_{\real}(1+\log(|y|+M))|u(y)\varphi(y)|~dy<\infty.\label{est: dct}
\end{align}
Therefore, by \cref{est: G00 N}, \cref{est: dct} and the dominated convergence theorem,
\begin{align}
\int_\real \varphi (x)\psi'(x)~dx &=\int_{\real}\int_{\real}G_0^0(x-y)u(y)\varphi(y)~dy~\psi'(x)~dx\notag\\
&=\lim_{N\to \infty}\frac1{2\pi}\int_{\real}\int_\real\int_0^N\frac{e^{i(x-y)\xi}-\chi(\xi)}{\xi}~d\xi~u(y)\varphi(y)\psi'(x)~dy~dx. \label{eq: weak diff varphi 1}
\end{align}
We want to use the Fubini theorem to change the order of integration. To that end, we observe that there is $C=C(M,\psi,N)$ such that
\begin{align}
&~\int_{\real}\int_{\real}\int_0^N\left|\frac{e^{i(x-y)\xi}-\chi(\xi)}{\xi}\right||\psi'(x)||u(y)\varphi(y)|~d\xi~dx~dy\notag\\
\le &~C\int_\real\int_{-M}^M\left(\int_0^1\left|\frac{e^{i(x-y)\xi}-1}{\xi}\right|~d\xi+C\right)|u(y)\varphi(y)|~dx~dy\notag\\
\le &~C\int_\real\int_{-M}^M(1+|\log|x-y||)|u(y)\varphi(y)|~dx~dy\notag\\
\le &~C\int_\real (1+\log(|y|+M))|u(y)\varphi(y)|~dx~dy<\infty.
\end{align}
Hence \cref{eq: weak diff varphi 1} equals to 
\begin{align}
&~\lim_{N\to \infty}\frac1{2\pi}\int_{\real}\int_0^N\int_\real\left(\frac{e^{i(x-y)\xi}-\chi(\xi)}{\xi}\right)\psi'(x)~dx~d\xi~u(y)\varphi(y)~dy\notag\\
=&~-i\lim_{N\to \infty}\int_\real\frac1{2\pi}\int_0^N\int_\real e^{i(x-y)\xi}\psi(x)~dx~d\xi~u(y)\varphi(y)~dy\notag\\
=&~-i\lim_{N\to \infty}\int_\real\psi(x)\frac1{2\pi}\int_0^N e^{ix\xi}\int_\real e^{-iy\xi}u(y)\varphi(y)~dy~d\xi~dx\notag\\
=&~-i\int_\real \psi(x) C_+(u\varphi)(x)~dx.
\end{align}
The last step follows from the fact that $\frac1{2\pi}\int_0^N e^{ix\xi}\int_\real e^{-iy\xi}u(y)\varphi(y)~dy~d\xi$ converges to $C_+(u\varphi)(x)$ in $L^2$. The above calculation shows
\eqn
\int_\real \varphi (x)\psi'(x)~dx =-i\int_\real \psi(x) C_+(u\varphi)(x)~dx,
\eeqn
which gives \cref{eq: diff T00}.
\end{proof}

We are now ready to prove the key vanishing lemma.
\begin{lem}\label{lem: mod L2 vanishing}
Suppose $s>s_1>\frac12$, $u\in L^2_s(\real)$, and that \cref{eq: chi nonzero im} holds. If $\varphi\in L^{\infty}_{-(s-s_1)}(\real)$ and $\varphi=T_0^0\varphi$, then $\varphi=0$.
\end{lem}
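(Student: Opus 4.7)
The strategy is to leverage the three consequences of $\varphi=T_0^0\varphi$ already built up in the section, and then run a Fourier-side integration-by-parts argument analogous to the $k\in\real\setminus[0,\infty)$ case of \Cref{lem: crucial iden}, handling the endpoint $\xi=0$ with extra care.

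First I would collect the available structural information on $\varphi$. By the preceding decay lemma, $|\varphi(x)|\le Cw^{-1}(x)$, so $\varphi\in L^1\cap L^2$, $x\varphi\in L^\infty$, and (since $u\in L^2_s$) both $u\varphi\in L^1\cap L^2$ and $xu\varphi\in L^2$. Next, \Cref{lem: diff T00} yields $\tfrac{1}{i}\partial_x\varphi-C_+(u\varphi)=0$ in $\mathcal{D}'$; taking the Fourier transform gives
\begin{equation*}
\xi\hat\varphi=\chi_{\real^+}\widehat{u\varphi},
\end{equation*}
so $\hat\varphi$ is supported in $[0,\infty)$ and $\varphi\in\hardy^+$. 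Because $xu\varphi\in L^2$, we have $\widehat{u\varphi}\in H^1(\real)$, and by \Cref{lem: vanish int 1} (whose hypothesis is exactly \cref{eq: chi nonzero im})
\begin{equation*}
\widehat{u\varphi}(0)=\int_\real u\varphi\,dx=0.
\end{equation*}

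Second, differentiating the relation $\xi\hat\varphi=\widehat{u\varphi}$ weakly on $(0,\infty)$ yields $\widehat{u\varphi}'=\hat\varphi+\xi\hat\varphi'$ there. Multiplying by $\overline{\hat\varphi}$ and taking real parts gives
\begin{equation*}
\operatorname{Re}(\widehat{u\varphi}'\,\overline{\hat\varphi})=|\hat\varphi|^2+\tfrac{\xi}{2}(|\hat\varphi|^2)'.
\end{equation*}
I would integrate this identity over $[\epsilon,M]$ with $0<\epsilon<M<\infty$, integrate the second term by parts, and then send $\epsilon\searrow 0$ and $M\to\infty$ to obtain
\begin{equation*}
\operatorname{Re}\int_0^\infty \widehat{u\varphi}'\,\overline{\hat\varphi}\,d\xi=\tfrac{1}{2}\int_0^\infty|\hat\varphi|^2\,d\xi,
\end{equation*}
provided the two boundary contributions $M|\hat\varphi(M)|^2$ and $\epsilon|\hat\varphi(\epsilon)|^2$ both vanish in the limit.

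Controlling those boundary terms is the main obstacle. Writing $\xi|\hat\varphi(\xi)|^2=|\widehat{u\varphi}(\xi)|^2/\xi$, the term at $M$ is harmless since $u\varphi\in L^1$ makes $\widehat{u\varphi}$ bounded, giving $|\widehat{u\varphi}(M)|^2/M\to 0$. The term at $\epsilon$ is the delicate one: the mere $H^1$ regularity of $\widehat{u\varphi}$ only provides $|\widehat{u\varphi}(\epsilon)|=O(\epsilon^{1/2})$, which is insufficient. Here I would use the vanishing $\widehat{u\varphi}(0)=0$ crucially, together with $\widehat{u\varphi}'\in L^2$, through the Cauchy--Schwarz estimate
\begin{equation*}
|\widehat{u\varphi}(\epsilon)|^2=\left|\int_0^\epsilon \widehat{u\varphi}'(t)\,dt\right|^2\le \epsilon\int_0^\epsilon|\widehat{u\varphi}'(t)|^2\,dt,
\end{equation*}
so that $\epsilon|\hat\varphi(\epsilon)|^2\le \int_0^\epsilon|\widehat{u\varphi}'|^2\,dt\to 0$. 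It is precisely this step that forces the use of \Cref{lem: vanish int 1}, and hence the hypothesis \cref{eq: chi nonzero im}.

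Finally, since $\hat\varphi$ is supported on $[0,\infty)$, Plancherel extends the left-hand integral to $\real$:
\begin{equation*}
\int_\real \widehat{u\varphi}'\,\overline{\hat\varphi}\,d\xi=-2\pi i\int_\real xu|\varphi|^2\,dx,
\end{equation*}
which is purely imaginary. Thus the real part vanishes, forcing $\int_0^\infty|\hat\varphi|^2\,d\xi=0$ and hence $\varphi=0$, completing the proof.
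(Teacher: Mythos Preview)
Your proof is correct and follows essentially the same route as the paper: reduce to the $k=0$ analogue of the Fourier-side identity in \Cref{lem: crucial iden}, using \Cref{lem: diff T00}, the decay estimate, and the vanishing integral from \Cref{lem: vanish int 1}. The only minor difference is the boundary term at $\xi=0$: the paper notes that in fact $(1+|x|)u\varphi\in L^1$, so $\widehat{u\varphi}\in C^1$ and $\hat\varphi(0+)=\widehat{u\varphi}'(0)$ is finite, making $\epsilon|\hat\varphi(\epsilon)|^2\to 0$ immediate; your Cauchy--Schwarz argument from $H^1$ regularity is a valid alternative.
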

\begin{proof}
At this point, we can basically repeat the proof of \Cref{lem: crucial iden} for the case $k<0$. Only in the present case, $k=0$. All calculations can be justified now that we know the decay estimate \cref{est: T00 phi decay}. One has from \Cref{lem: diff T00} that
\eqn
\chi_{\real^+}\widehat{u\varphi}=\xi\hat\varphi.
\eeqn
By \cref{est: T00 phi decay} and the conditions on $u$ and $\varphi$, we have $(1+|x|)u\varphi\in L^1$, Hence $\widehat{u\varphi}\in C^1(\real)$. Recall that $\widehat{u\varphi}(0)=0$ by \Cref{lem: vanish int 1}. Hence
\eqn
\hat\varphi(0+) = \lim_{\xi\to 0^+}\frac{\widehat{u\varphi}(\xi)}{\xi}=\widehat{u\varphi}'(0).
\eeqn
We repeat the argument in \Cref{lem: crucial iden} to get \cref{eq: varphi L2}, which now becomes
\eqn
2\pi \int_\real |\varphi|^2~dx=0.
\eeqn
\end{proof}

We can now prove existence of the modified Jost functions.

\begin{thm}\label{thm: existence mod Jost}
Let $s>s_1>\frac12$, $u\in L^2_s(\real)$, $k\in (\comp\setminus [0,\infty))\cup(\real^+\pm 0i)\cup\{0\}$, $\lambda\ge 0$, and $\chi(\xi)$ satisfy \cref{eq: chi nonzero im}. Then there is $k_0>0$ such that for all $|k|,\lambda<k_0$, there exist unique solutions $m_1^0(x,k), m_e^0(x,\lambda\pm 0i)\in L^{\infty}_{-(s-s_1)}(\real)$ to \cref{eq: mod m1} and \cref{eq: mod me}. Furthermore, there are $C>0$ and $\epsilon\in (0,1)$ such that
\eqn\label{eq: m1 asym}
\|m_1^0(k)-m_1^0(0)\|_{L^{\infty}_{-(s-s_1)}}\le C|k|^{\epsilon},
\eeqn
\eqn\label{eq: me asym}
\|m_e^0(\lambda\pm 0i)-m_1^0(0)\|_{L^{\infty}_{-(s-s_1)}}\le C\lambda^{\epsilon}.
\eeqn
\end{thm}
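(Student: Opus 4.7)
The plan is to combine the Fredholm alternative for $I - T_0^0$ with a Neumann-series perturbation argument controlled by the operator bound \cref{est: Tk0-T00}. By \Cref{lem: Tk0-T00}, $T_0^0$ is compact on $L^{\infty}_{-(s-s_1)}(\real)$, and by \Cref{lem: mod L2 vanishing} its kernel in that space is trivial (provided $\chi(\xi)$ satisfies \cref{eq: chi nonzero im}). The Fredholm alternative then gives that $I - T_0^0$ is invertible on $L^{\infty}_{-(s-s_1)}(\real)$. Writing
\eqn
I - T_k^0 = (I - T_0^0)\bigl[I - (I - T_0^0)^{-1}(T_k^0 - T_0^0)\bigr]
\eeqn
and applying the bound $\|T_k^0 - T_0^0\| \le C|k|^{\epsilon}$ from \cref{est: Tk0-T00}, I obtain a $k_0>0$ such that $\|(I-T_0^0)^{-1}(T_k^0 - T_0^0)\| \le \frac{1}{2}$ whenever $|k| < k_0$. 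A Neumann series argument then shows $I - T_k^0$ is invertible with $\|(I-T_k^0)^{-1}\|$ uniformly bounded on $|k|<k_0$. The unique solutions are then
\eqn
m_1^0(x,k) = (I - T_k^0)^{-1} 1, \qquad m_e^0(x, \lambda\pm 0i) = (I - T_{\lambda\pm 0i}^0)^{-1} \e(\lambda),
\eeqn
both in $L^{\infty}_{-(s-s_1)}(\real)$, with norms bounded uniformly for small $|k|$ and $\lambda$ (note $\|\e(\lambda)\|_{L^{\infty}_{-(s-s_1)}} \le 1$).

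For the asymptotic \cref{eq: m1 asym}, I subtract $(I - T_0^0)m_1^0(0) = 1$ from $(I - T_k^0)m_1^0(k) = 1$ to get
\eqn
(I - T_0^0)\bigl(m_1^0(k) - m_1^0(0)\bigr) = (T_k^0 - T_0^0)\, m_1^0(k).
\eeqn
Applying $(I - T_0^0)^{-1}$ and using \cref{est: Tk0-T00} together with the uniform bound on $\|m_1^0(k)\|_{L^{\infty}_{-(s-s_1)}}$ immediately yields \cref{eq: m1 asym}.

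For \cref{eq: me asym}, I first note that at $\lambda = 0$ one has $\e(0) \equiv 1$, so the defining equation for $m_e^0$ collapses to that of $m_1^0(0)$; by uniqueness $m_e^0(0) = m_1^0(0)$. Subtracting the two integral equations gives
\eqn
(I - T_0^0)\bigl(m_e^0(\lambda\pm 0i) - m_1^0(0)\bigr) = (T_{\lambda\pm 0i}^0 - T_0^0)\, m_e^0(\lambda\pm 0i) + (\e(\lambda) - 1).
\eeqn
The first term on the right is handled exactly as for $m_1^0$. For the second term, I would reuse the elementary splitting from part (a) of the proof of \Cref{thm: existence of Jost soln}: writing $|w^{s_1-s}(x)(e^{i\lambda x}-1)| \le w^{s_1-s}(x)\min\{|\lambda x|, 2\}$, and splitting at $|x| = 1/\sqrt{\lambda}$, gives $\|\e(\lambda) - 1\|_{L^{\infty}_{-(s-s_1)}} \le C\lambda^{\min((s-s_1)/2,\,1/2)}$. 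Composing with $(I - T_0^0)^{-1}$ yields \cref{eq: me asym} with $\epsilon = \min(\epsilon_0, (s-s_1)/2, 1/2)$ for the $\epsilon_0$ appearing in \cref{est: Tk0-T00}.

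The substantial technical work, namely compactness of $T_0^0$ and the vanishing lemma for its kernel, is already done in \Cref{lem: Tk0-T00} and \Cref{lem: mod L2 vanishing}; the logarithmic singularity at $k=0$ was absorbed precisely by the rank-one subtraction defining $T_k^0$. What remains here is routine Fredholm-perturbation bookkeeping, and I expect no genuine obstacle beyond keeping track of the Hölder exponent from the three independent sources ($T_k^0 - T_0^0$, $\e(\lambda) - 1$, and the uniform bound on $(I - T_k^0)^{-1}$).
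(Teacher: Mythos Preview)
Your proposal is correct and follows exactly the route the paper takes: Fredholm alternative at $k=0$ via \Cref{lem: Tk0-T00} and \Cref{lem: mod L2 vanishing}, then a Neumann-series perturbation controlled by \cref{est: Tk0-T00}, with the asymptotics obtained by subtracting the defining equations and estimating $\|\e(\lambda)-1\|_{L^{\infty}_{-(s-s_1)}}$ as in \Cref{thm: existence of Jost soln}. The paper's proof is a two-sentence sketch of precisely this argument; you have simply spelled out the details it leaves implicit.
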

\begin{proof}
\Cref{lem: Tk0-T00}, \Cref{lem: mod L2 vanishing} and the Fredholm alternative theorem imply the invertibility of $I-T_k^0$, from which we obtain existence and uniqueness of $m_1^0$ and $m_e^0$. The asymptotic bounds \cref{eq: m1 asym}, \cref{eq: me asym} follow from \Cref{lem: Tk0-T00}, and the fact that $\|e^{i\lambda x}-1\|_{L^{\infty}_{-(s-s_1)}}\le C\lambda^{\epsilon}$.
\end{proof}

We can obtain asymptotic formulas for the original Jost functions and scattering coefficients as $k$ approaches $0$, since the original Jost functions can be expressed in terms of the modified Jost functions as in \cref{eq: m1 vs m1 mod} and \cref{eq: me vs me mod}. At this point, it is useful to make a division between two distinct cases.

\begin{definition}
Let $u\in L^2_s(\real)$, and let $m_1^0(x,0)$ be constructed as in \Cref{thm: existence mod Jost}. $u$ is called a generic potential if $\int_\real u(x) m_1^0(x,0)~dx\ne 0$, or a non-generic potential if $\int_\real u(x) m_1^0(x,0)~dx=0$.
\end{definition}

Notice that $m_1^0(x,0)$ actually depends on the choice of the cutoff function $\chi(\xi)$ when we regularize $T_k$ to $T_k^0$. However, the definition of genericity does not depend on the choice of $\chi(\xi)$, as is shown in the following lemma. To state the lemma, let $\chi^{(1)}(\xi)$ and $\chi^{(2)}(\xi)$ be smooth functions on $[0,\infty)$, which are identically equal to $1$ on $[0,1]$, and identically equal to $0$ on $[2,\infty)$. We use the notation $G_0^{0(1)}$, $G_0^{0(2)}$, etc. to denote the corresponding objects constructed using $\chi^{(1)}(\xi)$ and $\chi^{(2)}(\xi)$.
\begin{lem}
Let $\chi^{(1)}(\xi)$ and $\chi^{(2)}(\xi)$ be given as above, and $u$ be given as in \Cref{thm: existence mod Jost}. Suppose $\chi^{(1)}(\xi)$ satisfy \cref{eq: chi nonzero im}, and let $m_1^{0(1)}(x,0)$ be the Jost solution constructed in \Cref{thm: existence mod Jost}. If 
\eqn\label{eq: generic 1}
\int_{\real}u(x)m_1^{0(1)}(x,0)~dx =0, 
\eeqn
then
\begin{enumerate}[(a)]
\item $I-T_0^{0(2)}$ is invertible on $L^{\infty}_{-(s-s_1)}(\real)$.
\item $m_1^{0(2)}(x,0)=(I-T_0^{0(2)})^{-1}1$ is the same as $m_1^{0(1)}(x,0)$.
\end{enumerate}
\end{lem}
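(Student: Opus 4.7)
The central observation is that the two regularized kernels differ only by a constant. Directly from the definitions,
\begin{equation*}
G_0^{0(1)}(x)-G_0^{0(2)}(x) = \frac{1}{2\pi}\int_0^{\infty}\frac{\chi^{(2)}(\xi)-\chi^{(1)}(\xi)}{\xi}~d\xi =: c,
\end{equation*}
where the integral converges since $\chi^{(1)}-\chi^{(2)}$ is smooth, compactly supported in $[0,2]$, and vanishes on $[0,1]$. Writing $\ell(\varphi)=\int_{\real}u(y)\varphi(y)~dy$, this means that as operators on $L^{\infty}_{-(s-s_1)}(\real)$,
\begin{equation*}
T_0^{0(1)}\varphi - T_0^{0(2)}\varphi = c\,\ell(\varphi)\cdot\mathbf{1},
\end{equation*}
so $T_0^{0(2)}$ is a rank-one perturbation of $T_0^{0(1)}$.

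The plan for (a) is to invert $I-T_0^{0(2)} = (I-T_0^{0(1)}) + c\,\ell(\cdot)\mathbf{1}$ by a Sherman--Morrison-type argument, leveraging that $(I-T_0^{0(1)})^{-1}$ already exists by \Cref{thm: existence mod Jost}. Given $g\in L^{\infty}_{-(s-s_1)}$, I look for $\varphi$ such that $(I-T_0^{0(2)})\varphi=g$, which by applying $(I-T_0^{0(1)})^{-1}$ rearranges to
\begin{equation*}
\varphi = (I-T_0^{0(1)})^{-1}g \;-\; c\,\ell(\varphi)\, m_1^{0(1)}(\cdot,0).
\end{equation*}
Applying $\ell$ to both sides and using the hypothesis $\ell(m_1^{0(1)}(\cdot,0))=0$ in \cref{eq: generic 1} gives $\ell(\varphi)=\ell((I-T_0^{0(1)})^{-1}g)$, after which $\varphi$ is read off explicitly. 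The invertibility amounts to the Sherman--Morrison determinant $1+c\,\ell(m_1^{0(1)}(\cdot,0))=1\neq 0$.

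For (b) I simply verify that $m_1^{0(1)}(x,0)$ solves the fixed-point equation \cref{eq: mod m1} with cutoff $\chi^{(2)}$:
\begin{equation*}
m_1^{0(1)} = 1+T_0^{0(1)}m_1^{0(1)} = 1 + T_0^{0(2)}m_1^{0(1)} + c\,\ell(m_1^{0(1)})\cdot\mathbf{1} = 1+T_0^{0(2)}m_1^{0(1)},
\end{equation*}
again by \cref{eq: generic 1}. The uniqueness furnished by part (a) then forces $m_1^{0(2)}(x,0)=m_1^{0(1)}(x,0)$.

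I expect no real obstacle here: the whole argument hinges on the kernel difference being constant, which makes the perturbation literally rank one, and the generic-potential hypothesis is exactly what kills the Sherman--Morrison denominator. The one small point to be careful about is ensuring the constant function $\mathbf{1}$ and the functional $\ell$ act sensibly on $L^{\infty}_{-(s-s_1)}(\real)$, which is clear since $s>s_1$ (so $\mathbf{1}\in L^{\infty}_{-(s-s_1)}$) and $u\in L^2_s\subset L^1_{s-s_1}\cdot L^2$ (so $\ell$ is a bounded linear functional on $L^{\infty}_{-(s-s_1)}$).
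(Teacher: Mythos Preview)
Your proof is correct, but it takes a genuinely different route from the paper. The paper proves (a) via the Fredholm alternative: it notes $T_0^{0(2)}$ is compact and argues that any $\varphi$ in the kernel of $I-T_0^{0(2)}$ must vanish. For this it uses the rank-one relation only to re-derive the key identity $\int u\varphi=0$ (since $\varphi=c\langle\varphi,u\rangle\,m_1^{0(1)}(0)$ and $\langle m_1^{0(1)}(0),u\rangle=0$), after which it invokes the entire chain of prior lemmas (the decay estimate, \cref{lem: diff T00}, and the $L^2$ vanishing argument of \cref{lem: mod L2 vanishing}) to conclude $\varphi=0$. Your Sherman--Morrison argument bypasses all of that: you write down an explicit bounded inverse of $I-T_0^{0(2)}$ directly in terms of $(I-T_0^{0(1)})^{-1}$, never needing compactness or the vanishing-lemma machinery. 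This is shorter and more self-contained; the paper's approach, by contrast, stays within the Fredholm framework used throughout \Cref{sec: k 0 limit} and highlights that the role of \cref{eq: chi nonzero im} is solely to secure \cref{eq: vanish int 1}. For (b) the two arguments are essentially identical. One terminological slip: you call \cref{eq: generic 1} ``the generic-potential hypothesis,'' but it is the \emph{non}-generic condition.
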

\begin{proof}
Part (a) is of course already established in \Cref{thm: existence mod Jost} if $\chi^{(2)}(\xi)$ satisfies \cref{eq: chi nonzero im}. The interesting point, however, is that when $u$ is non-generic, $I-T_0^{0(2)}$ must still be invertible when $\int_1^2\frac{\chi^{(2)}(\xi)}{\xi}~d\xi$ is real. To prove that, we need to show that any $\varphi\in L^{\infty}_{-(s-s_1)}$ satisfying $\varphi = T_0^{0(2)}\varphi$ must be zero. Examining the sequence of lemmas before \Cref{thm: existence mod Jost}, we find that the only place \cref{eq: chi nonzero im} was used was to establish the key vanishing integral \cref{eq: vanish int 1}, which we now show by different means. In fact, we observe that
\eqn
G_0^{0(2)}(x) = G_0^{0(1)}(x)+\frac{1}{2\pi}\int_1^2\frac{\chi^{(1)}(\xi)-\chi^{(2)}(\xi)}{\xi}~d\xi = G_0^{0(1)}(x) + c. 
\eeqn
We have
\eqn
\varphi=T_0^{0(2)}\varphi = G_0^{0(2)}*(u\varphi) = c\langle \varphi, u\rangle + G_0^{0(1)}*(u\varphi)=c\langle \varphi, u\rangle + T_0^{0(1)}\varphi.
\eeqn
Thus $\varphi = c\langle\varphi, u\rangle m_1^{0(1)}(0)$. Since $\langle m_1^{0(1)}(0),u\rangle=0$ by \cref{eq: generic 1}, $\langle \varphi, u\rangle =0$, which is the key vanishing integral \cref{eq: vanish int 1}. Part (a) can be proven by the same arguments following \cref{eq: vanish int 1}.

To show part (b), we observe that
\begin{align*}
G_0^{0(2)}*(um_1^{0(1)}(0)) &= G_0^{0(1)}*(um_1^{0(1)}(0)) + c\langle m_1^{0(1)}(0),u\rangle\\
&= G_0^{0(1)}*(um_1^{0(1)}(0))\\
& = m_1^{0(1)}(0)-1.
\end{align*}
The result now follows by uniqueness.
\end{proof}

We are now ready to state and compute the asymptotics of the Jost functions and scattering coefficients as $k$ approaches $0$.

\begin{thm}\label{thm: lim k 0}
Let $s>s_1>\frac12$, $u\in L^2_s(\real)$, $k\in (\comp\setminus[0,\infty))\cup (\real^+\pm 0i)$, and $\lambda>0$. Let $m_1(x,k)$ and $m_e(x,\lambda\pm 0i)$ be constructed as in \Cref{thm: existence of Jost soln}, and let $\Gamma(\lambda)$ and $\beta(\lambda)$ be defined as in \Cref{lem: def scattering coeff}. Let $m_0^0(x,0)$ be constructed as in \Cref{thm: existence mod Jost}. Then there exists $\epsilon\in (0,1)$ such that
as $k$ approaches $0$ and $\lambda$ approaches $0^+$, \begin{enumerate}[(a)]
\item if $u$ is a generic potential,
\eqn
m_1(k) = \frac{2\pi}{\langle m_1^0(0),u\rangle \log k}m_1^0(0) + O\left(\frac{1}{|\log^2 k|}\right),
\eeqn
\eqn
m_e(\lambda\pm 0i) =\frac{2\pi}{\langle m_1^0(0),u\rangle \log (\lambda\pm 0i)}m_1^0(0) + O\left(\frac{1}{|\log^2 \lambda|}\right),
\eeqn
\eqn
\Gamma(\lambda) = 1+\frac{2\pi i}{\log(\lambda+0i)} + O\left(\frac{1}{|\log^2 \lambda|}\right),
\eeqn
\eqn
\beta(\lambda) = \frac{2\pi i}{\log(\lambda+0i)} + O\left(\frac{1}{|\log^2 \lambda|}\right);
\eeqn
\item if $u$ is a non-generic potential,
\eqn
m_1(k) = m_1^0(0) + O(|k|^\epsilon |\log k|),
\eeqn
\eqn
m_e(\lambda\pm 0i) = m_1^0(0) + O(\lambda^\epsilon |\log \lambda|),
\eeqn
\eqn
\Gamma(\lambda) = 1 + O(\lambda^{\epsilon} |\log \lambda|),
\eeqn
\eqn
\beta(\lambda) = O(\lambda^{\epsilon} |\log \lambda|).
\eeqn
\end{enumerate}
Here the function $\log$ takes the principle branch, with a branch cut on $[0,\infty)$. The big $O$ notation has the usual meaning in equations involving $\Gamma$ and $\beta$, but holds in the sense of $L^{\infty}_{-(s-s_1)}(\real)$ norm in equations involving $m_1$ and $m_e$.
\end{thm}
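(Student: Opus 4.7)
The plan is to invert the relations \cref{eq: m1 vs m1 mod} and \cref{eq: me vs me mod} using the H\"older bounds on the modified Jost functions from \Cref{thm: existence mod Jost}, after extracting the logarithmic singularity of $l(k)$ at $k=0$. The first step is the elementary asymptotic
\[
 l(k) = -\tfrac{1}{2\pi}\log k + L_0(k),
\]
valid for $k$ in a deleted neighborhood of $0$ within $\rho(L_u)\cup(\real^+\pm 0i)$ with $L_0(k)$ uniformly bounded; this follows by splitting \cref{def: l} at some $\delta\in (0,1)$, explicitly integrating on $[0,\delta]$ to get $\log(\delta-k)-\log(-k)$, and noting that $\log(-k)-\log k$ is a bounded (piecewise constant) branch correction that can be absorbed into $L_0$. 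I would simultaneously record that $\alpha_1(k) := \int_{\real} u\, m_1^0(k)\,dx = \alpha_1(0) + O(|k|^\epsilon)$ and $\alpha_e(\lambda) := \int_{\real} u\, m_e^0(\lambda\pm 0i)\,dx = \alpha_1(0) + O(\lambda^\epsilon)$, which follows from \Cref{thm: existence mod Jost} together with $w^{s-s_1}u \in L^1$ (guaranteed by $u\in L^2_s$ and $s_1>\tfrac12$).

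In the generic case $\alpha_1(0)\neq 0$: since $|l(k)\alpha_1(k)| \sim |\alpha_1(0)\log k|/(2\pi) \to \infty$, the denominator in \cref{eq: m1 vs m1 mod} admits the geometric expansion
\[
\frac{1}{1-l(k)\alpha_1(k)} = \frac{2\pi}{\alpha_1(0)\log k} + O(1/\log^2 k).
\]
Multiplying by $m_1^0(k) = m_1^0(0) + O(|k|^\epsilon)$ and absorbing $|k|^\epsilon/|\log k|$ into $O(1/\log^2 k)$ (using $|k|^\epsilon|\log k|^n \to 0$ for every $n$) yields the formula for $m_1(k)$. For $m_e(\lambda\pm 0i)$ I would exploit the key cancellation that at $\lambda=0$ both $m_e^0(\lambda\pm)$ and $m_1^0(\lambda\pm)$ equal $m_1^0(0)$, so the bracketed combination $\alpha_e(\lambda)m_1^0(\lambda\pm) - \alpha_1(\lambda)m_e^0(\lambda\pm)$ in \cref{eq: me vs me mod} is $O(\lambda^\epsilon)$, and multiplication by $l(\lambda\pm) = O(|\log\lambda|)$ yields only $O(1/|\log\lambda|)$ in the numerator; division by the same denominator then gives the $m_e$ formula. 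The formulas for $\Gamma$ and $\beta$ follow by substituting these asymptotics into \cref{def: Gamma} and \cref{def: beta}, where the factor $e^{-i\lambda x}-1 = O(\lambda^\epsilon)$ in $L^\infty_{-(s-s_1)}$ contributes only $O(\lambda^\epsilon/|\log\lambda|)$, which is $O(1/\log^2\lambda)$.

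In the non-generic case $\alpha_1(0)=0$: now $l(k)\alpha_1(k) = l(k)\cdot O(|k|^\epsilon) = O(|k|^\epsilon|\log k|) \to 0$, so the denominator $1 - l(k)\alpha_1(k) = 1 + O(|k|^\epsilon|\log k|)$ is bounded away from zero and $m_1(k) = m_1^0(0) + O(|k|^\epsilon|\log k|)$ immediately. The same cancellation in \cref{eq: me vs me mod} yields $m_e(\lambda\pm) = m_1^0(0) + O(\lambda^\epsilon|\log\lambda|)$. Substituting into \cref{def: Gamma} and \cref{def: beta}, the would-be leading term $\int_\real u\, m_1^0(0)\,dx = \alpha_1(0)$ vanishes by non-genericity, leaving exactly the error $O(\lambda^\epsilon|\log\lambda|)$.

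The main obstacle is the error bookkeeping across the various asymptotic regimes: one must verify carefully that the $O(|k|^\epsilon)$ errors from the modified Jost functions, the bounded remainder in $l(k)$, and the $O(\lambda^\epsilon)$ error from $e^{-i\lambda x}-1$ combine at each step to give exactly the asserted rates. A secondary delicate point is ensuring uniformity of the estimates as $k \to 0$ from arbitrary directions in $\rho(L_u)\cup(\real^+\pm 0i)$ (including both sides of the positive real axis, where the Plemelj formula would give different imaginary parts for $l(\lambda\pm 0i)$); this is guaranteed by the uniform bounds of \Cref{lem: Gk0-G00} and \Cref{thm: existence mod Jost} and by the branch compatibility of $\log(\lambda\pm 0i)$ with the stated principal branch convention.
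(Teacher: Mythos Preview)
Your proposal is correct and follows exactly the paper's approach: the paper's own proof consists of the single observation that $l(k)=\frac{1}{2\pi}\log k+h(k)$ with $h$ analytic near $0$, followed by a ``straightforward calculation using \cref{eq: m1 vs m1 mod}, \cref{eq: me vs me mod}, \cref{eq: m1 asym}, \cref{eq: me asym}'' --- precisely the substitution-and-expansion that you carry out in detail. (Your sign $l(k)=-\tfrac{1}{2\pi}\log k+L_0(k)$ is in fact the one consistent with the formulas in the theorem statement, so the opposite sign in the paper's displayed computation appears to be a typo.)
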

\begin{proof}
The proof is a straightforward calculation using \cref{eq: m1 vs m1 mod}, \cref{eq: me vs me mod}, \cref{eq: m1 asym}, \cref{eq: me asym}, and the definitions of $\Gamma(\lambda)$ and $\beta(\lambda)$. We only need to observe that
\begin{align}
l(k) &= \frac1{2\pi}\int_0^{\infty}\frac{\chi(\xi)}{\xi-k}~d\xi\notag\\
&= \frac{1}{2\pi}\int_0^1\frac{1}{\xi-k}~d\xi + \frac1{2\pi}\int_1^2\frac{\chi(\xi)}{\xi-k}~d\xi\notag\\
&= \frac1{2\pi}\log k + h(k),
\end{align}
where $\log$ takes the principle branch with branch cut $[0,\infty)$, and $h(k)$ is analytic around $k=0$.
\end{proof}

\section{Asymptotic behavior near $k=\infty$}\label{sec: k infty limit}

In this section, we obtain asymptotic formulas for the Jost functions and scattering coefficients as $k$ approaches $\infty$ in the cut plane. The situation of large $k$ limit is very different from that of small $k$ limit discussed in \Cref{sec: k 0 limit}. As we will see in the following, the operator $I-T_k$ can be inverted explicitly when $|k|$ is sufficiently large. This allows explicit calculation and estimation of error. Similar to the situation of the Fourier transform, high regularity and decay of the potential $u$ implies high regularity and decay of the scattering coefficients as $k$ tends to $\infty$. The precise assumptions on $u$ and the corresponding decay estimates on the scattering coefficients may vary according to the needs in application. As an example, we work in this section with the following three types of assumptions on $u$: $u\in L^2_s(\real)$ with $s>\frac12$, $u\in H^s_s(\real)$ with $s>\frac12$, and $u$ in the Schwartz class $\mathcal{S}$. The first type of spaces is to keep the same assumption on $u$ as in the previous sections. The second type of spaces will provide the proper assumption to obtain a higher order term for $m_1(x,k)$. Finally, the choice of the Schwartz class will allow us to see how rapid decay of the scattering coefficients may be obtained, without having to formulate the regularity and decay assumptions on $u$ too carefully.

To begin, let's use the weakest of the three types of assumptions: $u\in L^2_s(\real)$ and show how $I-T_k$ can be inverted explicitly on $L^{\infty}_{-(s-s_1)}(\real)$,
when $s>s_1>\frac12$. First assume $k$ is in a fixed Stolz angle away from the positive real line. In other words, there exists $\alpha \in (0,\frac\pi 2)$ such that $$|\text{Im }k|\ge(\tan \alpha)\text{Re }k.$$ For any such $k$ and any $\xi>0$, $|\xi-k|$ and $\xi+|k|$ are comparable: $$0<\frac1{C_{\alpha}}\le \frac{|\xi-k|}{\xi+|k|}\le C_{\alpha}.$$ Therefore by the definition of $T_k$ and $G_k$ given in \cref{def: T lambda} and \cref{def: G},
\eqn
\|T_k \varphi\|_{\infty}\le \|G_k\|_2\|u\varphi\|_2\le C\|G_k\|_2\|\varphi\|_{L^{\infty}_{-(s-s_1)}},
\eeqn
where
\eqn
\|G_k\|_2 \le C\left(\int_0^{\infty}\frac{1}{(\xi+|k|)^2}~d\xi\right)^{\frac12}\le \frac{C}{\sqrt {|k|}}.
\eeqn
It follows that $\|T_k\|_{L^{\infty}_{-(s-s_1)}\to L^{\infty}}\le \frac{C}{\sqrt {|k|}}$. Therefore $(I-T_k)^{-1}=\sum_{n=0}^{\infty}T_k^n$ when $|k|$ is large. To invert $I-T_k$ when $k$ is close to the positive real line, we write $T_k = S_k - \widetilde{T}_k$ by \cref{eq: G_lambda ep decomp}, where for $k=\lambda\pm \mu i$, with $\lambda>0$, $\mu\ge 0$:
\eqn
S_k\varphi(x) = i\int_{\mp \infty}^x e^{ik(x-y)}u(y)\varphi(y)~dy,
\eeqn
and
\eqn\label{def: tilde T}
\widetilde{T}_k\varphi = \widetilde{G}_{k}*(u\varphi),\quad \widetilde{G}_{k}=\frac1{2\pi}\int_{-\infty}^0\frac{e^{ix\xi}}{\xi-k}~d\xi.
\eeqn
Now that $k=\lambda\pm \mu i$ with $\lambda>0$, $k$ is in a fixed Stolz angle away from the negative real line. By the same argument as above, we have $\|\widetilde{T}_k\|_{L^{\infty}_{-(s-s_1)}\to L^{\infty}}\le \frac{C}{\sqrt {|k|}}$. On the other hand, $I-S_k$ can be inverted explicitly by solving an ODE. In fact, we can rewrite
\eqn
\varphi = S_k\varphi + g = g+i\int_{\mp \infty}^x e^{ik(x-y)}u(y)\varphi(y)~dy
\eeqn
as
\eqn\label{eq: Sk inv ode 1}
(\varphi-g)e^{-ikx} = i\int_{\mp \infty}^x e^{-iky}u(y)\varphi(y)~dy.
\eeqn
Differentiating with respect to $x$ and rearranging terms using an integrating factor, we get
\eqn\label{eq: Sk inv ode 2}
[e^{-i\int_{\mp\infty}^x u(t)~dt}e^{-ikx}(\varphi-g)]_x = ie^{-i\int_{\mp\infty}^xu(t)~dt}e^{-ikx}ug
\eeqn
By \cref{eq: Sk inv ode 1}, $\varphi(x)-g(x)\to 0$ as $x\to \mp \infty$. Hence we may integrate \cref{eq: Sk inv ode 2} from $\mp \infty$ and get
\eqn\label{eq: I-Sk inv}
\varphi(x) = g(x) +i\int_{\mp \infty}^x e^{ik(x-y)}e^{i\int_y^x u(t)~dt}u(y)g(y)~dy.
\eeqn
The right hand side of \cref{eq: I-Sk inv} is $(I-S_k)^{-1}g$. It is easy to see that the operator norm of $(I-S_k)^{-1}$ is bounded uniformly in $k$ for $|k|$ large.
Combining the calculation above, we may write
\begin{align}
(I-T_k)^{-1}&=(I-S_k+\widetilde{T}_k)^{-1}=(I+(I-S_k)^{-1}\widetilde{T}_k)^{-1}(I-S_k)^{-1}\notag\\
&= \sum_{n=0}^{\infty}(-(I-S_k)^{-1}\widetilde{T}_k)^{n}(I-S_k)^{-1}.
\end{align}
We have thus proved
\begin{lem}\label{lem: inv forms}
Let $s>s_1>\frac12$ and $u\in L^2_s(\real)$. Let $k\in (\comp\setminus[0,\infty))\cup (\real^+\pm 0i)$. There exists $k_0>0$ such that for $|k|>k_0$, $I-T_k$ is invertible on $L^{\infty}_{-(s-s_1)}(\real)$, and
\begin{enumerate}[(a)]
\item if $k$ is in a fixed Stolz angle away from the positive real line, i.e. there exists $\alpha \in (0,\frac\pi 2)$ such that $$|\text{Im }k|\ge(\tan \alpha)\text{Re }k,$$ then 
\eqn\label{eq: Tk left plane bound}
\|T_k\|_{L^{\infty}_{-(s-s_1)}\to L^{\infty}}\le \frac{C_{\alpha}}{\sqrt{|k|}},
\eeqn 
and
\eqn\label{eq: I-Tk inv 1}
(I-T_k)^{-1} = \sum_{n=0}^{\infty}T_k^n,
\eeqn
\item if $k=\lambda\pm i\mu$, with $\lambda>0$, $\mu\ge 0$, then for $\widetilde{T}_k$ given in \cref{def: tilde T}, and 
\eqn
R_k\varphi (x)= i\int_{\mp \infty}^x e^{ik(x-y)}e^{i\int_y^x u(t)~dt}u(y)\varphi(y)~dy,
\eeqn
we have 
\eqn\label{eq: Tk Rk bound}
\|\widetilde T_k\|_{L^{\infty}_{-(s-s_1)}\to L^{\infty}}\le \frac{C}{\sqrt{|k|}},\quad \|R_k\|_{L^{\infty}_{-(s-s_1)}\to L^{\infty}}\le C,
\eeqn
and 
\eqn\label{eq: I-Tk inv 2}
(I-T_k)^{-1}=\sum_{n=0}^{\infty}(-(I+R_k)\widetilde{T}_k)^n (I+R_k).
\eeqn
\end{enumerate}
\end{lem}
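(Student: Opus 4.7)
The plan is to formalize the construction already sketched in the discussion preceding the statement; the two parts are essentially independent.

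For part (a), I would first quantify the Stolz hypothesis: if $|\text{Im } k|\ge (\tan\alpha)\text{Re } k$, then an elementary geometric argument gives $|\xi-k|\ge c_\alpha(\xi+|k|)$ for every $\xi\ge 0$, whence by Plancherel
\[
\|G_k\|_2^2 \le \frac{1}{2\pi}\int_0^\infty \frac{d\xi}{|\xi-k|^2}\le \frac{C_\alpha}{|k|}.
\]
The conditions $u\in L^2_s$ and $\varphi\in L^\infty_{-(s-s_1)}$ force $u\varphi\in L^2$ with $\|u\varphi\|_2\le C\|\varphi\|_{L^\infty_{-(s-s_1)}}$, so Young's inequality yields $\|T_k\varphi\|_\infty\le \|G_k\|_2\|u\varphi\|_2\le C|k|^{-1/2}\|\varphi\|_{L^\infty_{-(s-s_1)}}$, which is \cref{eq: Tk left plane bound}. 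Since $w\ge 1$ makes $L^\infty\hookrightarrow L^\infty_{-(s-s_1)}$ contractively, $T_k$ acts on $L^\infty_{-(s-s_1)}$ with norm $O(|k|^{-1/2})$, and the Neumann series \cref{eq: I-Tk inv 1} converges for $|k|$ sufficiently large.

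For part (b), I would start from the splitting $T_k=S_k-\widetilde{T}_k$, which is immediate from \cref{eq: G_lambda ep decomp}. Since $k=\lambda\pm i\mu$ with $\lambda>0$, $\mu\ge 0$ now lies in a Stolz angle away from the negative real line, applying the argument of part (a) to $\widetilde{G}_k$ gives the first bound in \cref{eq: Tk Rk bound}. To invert $I-S_k$, I would execute the ODE computation sketched in the text: writing $\varphi=S_k\varphi+g$ as $e^{-ikx}(\varphi-g)=i\int_{\mp\infty}^x e^{-iky}u\varphi\,dy$ and then applying the integrating factor $e^{-iU(x)}$ with $U(x)=\int_{\mp\infty}^x u(t)\,dt$ produces \cref{eq: I-Sk inv}. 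The boundary condition $\psi(\mp\infty)=0$ needed to fix the constant of integration follows from $|S_k\varphi(x)|\le \int_{\mp\infty}^x|u\varphi|\,dy\to 0$ by dominated convergence, using $|e^{ik(x-y)}|\le 1$ on the integration range and $u\varphi\in L^1$. The uniform bound on $R_k$ in \cref{eq: Tk Rk bound} then follows from the same sign observation, the reality of $u$ making $|e^{i\int_y^x u\,dt}|=1$, and the estimate $\|u\varphi\|_1\le \|w^{-s_1}\|_2\|u\|_{L^2_s}\|\varphi\|_{L^\infty_{-(s-s_1)}}$, which is valid because $s_1>\frac12$.

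Finally, \cref{eq: I-Tk inv 2} comes from the factorization $I-T_k=(I-S_k)(I+(I-S_k)^{-1}\widetilde{T}_k)$ together with $(I-S_k)^{-1}=I+R_k$ and a second Neumann series: the composite $(I+R_k)\widetilde{T}_k$ has $L^\infty_{-(s-s_1)}$ operator norm $O(|k|^{-1/2})$ by the two halves of \cref{eq: Tk Rk bound}, so absolute convergence is automatic once $|k|$ is large. I do not expect any real obstacle here: the essential ingredients (the $L^2$ bound on the kernels, the explicit solvability of the Volterra-type equation $\varphi=S_k\varphi+g$, the phase cancellation from the reality of $u$) are either immediate or already documented in the preceding sections, and the task reduces to bookkeeping which operator is being estimated on which weighted $L^\infty$ space.
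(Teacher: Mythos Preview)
Your proposal is correct and follows essentially the same argument as the paper: the Stolz-angle bound on $\|G_k\|_2$ (resp.\ $\|\widetilde G_k\|_2$) combined with Young's inequality for part (a) and the $\widetilde T_k$ estimate, the explicit ODE inversion of $I-S_k$ via the integrating factor to produce $R_k$, and the factorization $(I-T_k)^{-1}=(I+(I-S_k)^{-1}\widetilde T_k)^{-1}(I-S_k)^{-1}$ followed by a Neumann series. The paper's proof is in fact the discussion immediately preceding the lemma, and you have reconstructed it faithfully.
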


The calculation of the scattering coefficients will be simplified by the following lemma.

\begin{lem}\label{lem: int simplify}
Let $u$ and $R_{\lambda+0i}$ be given as in \Cref{lem: inv forms}. If $\varphi\in L^{\infty}_{-(s-s_1)}(\real)$, then
\eqn\label{eq: int simplify}
\int_\real u(x)e^{-i\lambda x}[(I+R_{\lambda+0i})\varphi ](x)~dx = \int_\real u(x)e^{-i\lambda x}e^{i\int_x^\infty u(t)~dt}\varphi (x)~dx.
\eeqn
\end{lem}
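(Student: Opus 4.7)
The plan is to expand the left hand side using the definition
\[
R_{\lambda+0i}\varphi(x) = i\int_{-\infty}^x e^{i\lambda(x-y)}\,e^{i\int_y^x u(t)\,dt}\,u(y)\varphi(y)\,dy,
\]
then apply Fubini's theorem to swap the order of integration, and finally recognize the resulting inner integral as an exact differential so it can be evaluated by the fundamental theorem of calculus.

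More concretely, the identity contributed by the $R_{\lambda+0i}$ term reads
\[
\int_{\real} u(x) e^{-i\lambda x} \left[i\int_{-\infty}^x e^{i\lambda(x-y)} e^{i\int_y^x u(t)\,dt} u(y)\varphi(y)\,dy\right] dx.
\]
The factor $e^{-i\lambda x}\cdot e^{i\lambda(x-y)}$ collapses to $e^{-i\lambda y}$, removing any oscillation in $x$. After swapping the order of integration over the region $\{-\infty<y<x<\infty\}$, the inner integral becomes $\int_y^\infty u(x)\,e^{i\int_y^x u(t)\,dt}\,dx$. Since $\frac{d}{dx}\bigl(e^{i\int_y^x u(t)\,dt}\bigr)=iu(x)e^{i\int_y^x u(t)\,dt}$, this inner integral evaluates to $\tfrac{1}{i}\bigl(e^{i\int_y^\infty u(t)\,dt}-1\bigr)$, where the limit at $+\infty$ exists because $u\in L^2_s(\real)\subset L^1(\real)$ for $s>\tfrac12$. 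Combining with the lone $I$-term and cancelling the $-1$ contribution against $\int u\,e^{-i\lambda y}\varphi\,dy$ yields exactly the right hand side.

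The only analytic point that needs care is the hypothesis for Fubini. The integrand in absolute value is bounded by $|u(x)u(y)\varphi(y)|\,e^{\|u\|_1}$ on $\{y\le x\}$, and since $u\in L^2_s$, $\varphi\in L^{\infty}_{-(s-s_1)}$ give $u\varphi\in L^1(\real)$ and $u\in L^1(\real)$, the double integral is absolutely convergent; the uniform bound $|e^{i\int_y^x u(t)\,dt}|=1$ (because $u$ is real valued) in fact makes the modulus bound simply $|u(x)||u(y)\varphi(y)|$. With that, Fubini applies and the remaining manipulations are a one line calculation. The main (and really only) obstacle is therefore the clean bookkeeping of the cancellation after Fubini; no functional analytic subtlety arises.
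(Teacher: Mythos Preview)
Your proposal is correct and follows essentially the same approach as the paper: expand the $R_{\lambda+0i}$ term, apply Fubini (justified exactly as you do, via $u\in L^1$ and $u\varphi\in L^1$ with $|e^{i\int_y^x u}|=1$), evaluate the resulting inner $x$-integral as an exact differential, and cancel the $-1$ contribution against the $I$-term. The only cosmetic difference is that the paper writes the antiderivative as $e^{i\int_{-\infty}^x u(t)\,dt}$ with a compensating prefactor $e^{-i\int_{-\infty}^y u(t)\,dt}$, whereas you use $e^{i\int_y^x u(t)\,dt}$ directly.
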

\begin{proof}
Recall that $u\in L^1$ and $u\varphi\in L^1$ by the conditions on $u$ and $\varphi$. Therefore by Fubini's theorem
\begin{align}
&~\int_\real u(x)e^{-i\lambda x}[R_{\lambda+0i}\varphi](x)~dx\notag\\
= &~\int_\real u(x)e^{-i\lambda x}i\int_{-\infty}^x e^{i\lambda(x-y)}e^{i\int_y^x u(t)~dt}u(y)\varphi(y)~dy~dx\notag\\
=&~ \int_\real u(y)\varphi(y) e^{-i\int_{-\infty}^y u(t)~dt}e^{-i\lambda y}\int_y^\infty \left(e^{i\int_{-\infty}^x u(t)~dt}\right)_x~dx~dy\notag\\
=&~\int_\real u(y)e^{-i\lambda y}e^{i\int_y^\infty u(t)~dt}\varphi(y)~dy - \int_\real u(y)e^{-i\lambda y}\varphi(y)~dy.
\end{align}
\Cref{eq: int simplify} thus follows.
\end{proof}

We want to use the inversion formulas in \Cref{lem: inv forms} to compute asymptotics of $m_1(x,k)$, $m_e(x,\lambda\pm 0i)$, $\Gamma(\lambda)$, $\beta(\lambda)$, and $f(\lambda)$. By relations \cref{eq: me pm rel} and \cref{eq: f beta rel}, we only need to study $m_1(x,k)$, $m_e(x,\lambda + 0i)$, $\Gamma(\lambda)$, and $\beta(\lambda)$.

\begin{thm}\label{thm: lim infty}
Let $s>\frac12$, $u\in L^2_s(\real)$, $k\in (\comp\setminus[0,\infty))\cup (\real^+\pm 0i)$, and $\lambda>0$. Then
\eqn\label{eq: lim m1 infty}
\lim_{k\to \infty}m_1(x,k) = 1, 
\eeqn
\eqn\label{eq: lim me infty}
\lim_{\lambda \to \infty}m_e(x,\lambda+0i) - e^{i\lambda x}e^{i\int_{-\infty}^x u(t)~dt} = 0,
\eeqn
\eqn\label{eq: lim Gamma infty}
\Gamma(\lambda) - e^{i\int_\real u(t)~dt} =O\left(\frac1{\lambda}\right) \text{ as }\lambda\to \infty,
\eeqn
and
\eqn\label{eq: lim beta infty}
\beta(\lambda) \in L^2(a,\infty),~\lim_{\lambda\to \infty}\beta(\lambda)=0.
\eeqn
Here the limits for $m_1$ and $m_e$ hold in $L^{\infty}(\real)$ norm, and $a>0$ is any fixed number.
\end{thm}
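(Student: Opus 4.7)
The plan is to invert $I-T_k$ using the two formulas in \Cref{lem: inv forms}. For $k$ lying in a fixed Stolz cone $|\text{Im }k|\ge(\tan\alpha)\text{Re }k$ away from $[0,\infty)$, the Neumann series $(I-T_k)^{-1}=\sum_n T_k^n$ combined with the bound $\|T_k\|_{L^\infty_{-(s-s_1)}\to L^\infty}\le C/\sqrt{|k|}$ and the identity $m_1(k)-1=T_k m_1(k)$ yields $\|m_1(k)-1\|_\infty=O(1/\sqrt{|k|})\to 0$ in that regime. The essentially new work is for $k=\lambda\pm 0i$, where I use the second formula
\[
(I-T_k)^{-1}=\sum_{n=0}^\infty(-(I+R_k)\widetilde T_k)^n(I+R_k).
\]
I will treat the ``$+$'' case; the ``$-$'' case is symmetric.

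The main algebraic observation is that the leading terms $(I+R_{\lambda+0i})e^{i\lambda x}$ and $(I+R_{\lambda+0i})1$ can be evaluated explicitly via the identity $\frac{d}{dy}e^{i\int_y^x u(t)\,dt}=-iu(y)e^{i\int_y^x u(t)\,dt}$. Substituting $\varphi=e^{i\lambda y}$ into the definition of $R_{\lambda+0i}$ and telescoping gives $(I+R_{\lambda+0i})e^{i\lambda\cdot}(x)=e^{i\lambda x}e^{i\int_{-\infty}^x u(t)\,dt}$, and the remaining Neumann tail is $O(1/\sqrt\lambda)$ in $L^\infty$ because $\|\widetilde T_{\lambda+0i}\|\le C/\sqrt\lambda$ while $\|I+R_{\lambda+0i}\|$ stays uniformly bounded; this yields \cref{eq: lim me infty}. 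The parallel calculation for $m_1$ produces $(I+R_{\lambda+0i})1(x)=1+i\int_{-\infty}^x e^{i\lambda(x-y)}e^{i\int_y^x u(t)\,dt}u(y)\,dy$, and I expect the principal obstacle to be showing that this inner integral tends to $0$ \emph{uniformly} in $x$, i.e., a uniform Riemann--Lebesgue statement. I would handle it by $L^1$-approximation: write $u=u_n+(u-u_n)$ with $u_n\in C_c^\infty(\real)$. The $(u-u_n)$ contribution is bounded by $\|u-u_n\|_1$ uniformly in $x$, while for smooth truncated $u_n$ an integration by parts in $y$ (together with uniform bounds on $u_n$, $u_n'$, and the modulus-one factor $e^{i\int_y^x u_n}$) gives $O(C_n/\lambda)$ uniformly in $x$. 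A standard $\epsilon/2$ argument then delivers uniform convergence and establishes \cref{eq: lim m1 infty}.

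For the scattering coefficients, I will substitute $m_e=(I+R_{\lambda+0i})e^{i\lambda x}-(I+R_{\lambda+0i})\widetilde T_{\lambda+0i}m_e$, which follows from $(I-S_{\lambda+0i}+\widetilde T_{\lambda+0i})m_e=e^{i\lambda x}$, into \cref{def: Gamma} and apply \Cref{lem: int simplify} to each piece. The leading piece evaluates \emph{exactly}: $1+i\int u(x)e^{i\int_x^\infty u(t)\,dt}\,dx=e^{i\int_\real u(t)\,dt}$ by another telescoping with $\frac{d}{dx}e^{i\int_x^\infty u}=-iu e^{i\int_x^\infty u}$. The error term equals $-i\int v(x)e^{-i\lambda x}\widetilde T_{\lambda+0i}m_e(x)\,dx$ where $v=ue^{i\int_x^\infty u}\in L^1\cap L^2$; using $\widehat{\widetilde T_{\lambda+0i}m_e}(\eta)=\chi_{\eta<0}(\eta-\lambda)^{-1}\widehat{um_e}(\eta)$ and Plancherel, this error rewrites as
\[
-\frac{1}{2\pi}\int_{\xi>0}\frac{\hat v(\xi+\lambda)\,\widehat{um_e}(-\xi)}{\xi+\lambda}\,d\xi,
\]
and since $\xi+\lambda\ge\lambda$ on the integration domain and $\hat v,\widehat{um_e}\in L^2$, Cauchy--Schwarz delivers $O(1/\lambda)$, proving \cref{eq: lim Gamma infty}. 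The identical decomposition of $\beta(\lambda)=i\int u(x)m_1(x,\lambda+0i)e^{-i\lambda x}\,dx$ produces a leading part $i\hat{\tilde v}(\lambda)$ with $\tilde v=ue^{i\int_x^\infty u(t)\,dt}\in L^1\cap L^2$, together with an $O(1/\lambda)$ Parseval error of the same form. Riemann--Lebesgue gives $\hat{\tilde v}(\lambda)\to 0$ while Plancherel gives $\hat{\tilde v}\in L^2(\real)$, so $\beta(\lambda)\to 0$ and $\beta\in L^2(a,\infty)$ for every $a>0$, which is \cref{eq: lim beta infty}.
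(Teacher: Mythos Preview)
Your argument is correct and follows essentially the same route as the paper: both proofs invoke the two inversion formulas of \Cref{lem: inv forms}, evaluate $(I+R_{\lambda+0i})\e$ and $(I+R_{\lambda+0i})1$ explicitly via the derivative identity for $e^{i\int_y^x u}$, apply \Cref{lem: int simplify} to the integrals defining $\Gamma$ and $\beta$, and control the remainder through Plancherel and the $L^2$ bound $\|\widetilde T_{\lambda+0i}\|\le C/\sqrt\lambda$. Two small points of stylistic divergence are worth noting: for the uniform-in-$x$ statement $\|R_{\lambda+0i}1\|_\infty\to 0$ you use a direct $L^1$-density plus integration-by-parts argument, whereas the paper partitions $\real$ into finitely many intervals on which $R_k1$ oscillates little and then applies Riemann--Lebesgue at the endpoints; and for the $O(1/\lambda)$ error in $\Gamma$ and $\beta$ you invoke the exact one-step identity $m=(I+R)g-(I+R)\widetilde T m$ and bound $\widetilde T m$ via Cauchy--Schwarz on the Fourier side, while the paper truncates the Neumann series at the second term and bounds $\widetilde T(I+R)g$ instead. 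Both variants are equivalent in substance.
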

\begin{proof}
We first work on $m_1(k) = (I-T_k)^{-1}1$. If $k$ is in the left half plane, we use \cref{eq: I-Tk inv 1}, and the fact that $\|\sum_{n=1}^{\infty}T_k^n 1\|_{\infty}\le \frac{C}{\sqrt{|k|}}$ to conclude \cref{eq: lim m1 infty}. If $k$ is in the right half cut plane, we use \cref{eq: I-Tk inv 2} to write
\eqn\label{eq: m1 limit infty 1}
m_1(k) = (I+R_k)1 + \sum_{n=1}^{\infty}(-(I+R_k)\widetilde{T}_k)^n (I+R_k)1,
\eeqn
and use \cref{eq: Tk Rk bound} to conclude that the infinite sum in \cref{eq: m1 limit infty 1} has $L^{\infty}$ norm bounded by $\frac{C}{\sqrt{|k|}}$. What is left to show is that $\|R_k1\|_{\infty}\to 0$ as $k$ approaches $\infty$ in the right half cut plane. For simplicity of presentation, let us work only with the case $k=\lambda+i\mu$ with $\lambda>0$, $\mu\ge0$. In the following proof, this is always assumed. Thus
\eqn
R_k1 (x)= i\int_{-\infty}^x e^{ik(x-y)}e^{i\int_y^x u(t)~dt}u(y)~dy,
\eeqn
Recall that $u\in L^1$ if $u\in L^2_s$ with $s>\frac12$, and $|e^{ik(x-y)}|\le 1$ when $x-y\ge 0$. So $\lim_{x\to -\infty}R_k1(x)=0$, and
\eqn
\lim_{x\to \infty}R_k1(x) = \begin{cases}0 &\text{ if }\mu>0,\\ i\int_\real e^{i\lambda(x-y)}e^{i\int_y^x u(t)~dt}u(y)~dy &\text{ if }\mu =0,\end{cases}
\eeqn
by the dominated convergence theorem. By the Riemann-Lebesgue lemma, 
\eqn
\lim_{\lambda\to \infty}i\int_\real e^{i\lambda(x-y)}e^{i\int_y^x u(t)~dt}u(y)~dy=0.
\eeqn
Therefore for every $\epsilon>0$, there is $k_1>0$ such that if $|k|>k_1$, $|\lim_{x\to \infty}R_k1(x)|< \epsilon$. Since $u\in L^1$, there exist finitely many points $\{x_n\}_{n=1}^N$ such that $|R_k1(x)-R_k1(y)|<\epsilon$ if none of the $x_n$'s is between $x$ and $y$. As we have already controlled $R_k1(x)$ when $x$ is at $\pm \infty$, it remains to control $R_k1(x)$ if $x$ is one of $\{x_n\}_{n=1}^N$. For each fixed $x_n$, we have
\eqn
R_k1(x_n) = i\int_{-\infty}^{x_n} e^{ik(x_n-y)}e^{i\int_y^{x_n} u(t)~dt}u(y)~dy.
\eeqn
We claim that $\lim_{k\to \infty}R_k1(x_n)=0$. In fact, one can mimic the proof of the Riemann-Lebesgue lemma, and approximate $u$ in $L^1$ by a $C_0^{\infty}$ function $g$, while integrating 
\eqn
i\int_{-\infty}^{x_n} e^{ik(x_n-y)}e^{i\int_y^{x_n} u(t)~dt}g(y)~dy
\eeqn
by parts to get
\eqn\label{eq: R-L control}
-\frac{1}{k}g(x_n)+\frac{1}{k}\int_{-\infty}^{x_n} e^{ik(x_n-y)}\left(e^{i\int_y^{x_n} u(t)~dt}g(y)\right)_y~dy,
\eeqn
which obviously tends to $0$ as $k$ tends to $\infty$. Thus by enlarging $k_1$ finitely many times, we get for $|k|>k_1$, $|R_k(x)|<2\epsilon$ for all $x$. This completes the proof of \cref{eq: lim m1 infty}. By a similar argument as above, the asymptotic behavior of $m_e(\lambda+0i)$ is given by $(I+R_{\lambda+0i})\e$, which in this case can be computed explicitly, as
\begin{align}
[R_{\lambda+0i}\e](x) &=i\int_{- \infty}^x e^{i\lambda(x-y)}e^{i\int_y^x u(t)~dt}u(y)e^{i\lambda y}~dy\notag\\
&= -e^{i\lambda x}e^{i\int_{-\infty}^xu(t)~dt} \int_{-\infty}^x \left(e^{-i\int_{-\infty}^y u(t)~dt}\right)_y~dy\notag\\
&= -e^{i\lambda x}+ e^{i\lambda x}e^{i\int_{-\infty}^xu(t)~dt}.
\end{align}
Hence $[(I+R_{\lambda+0i})\e](x)=e^{i\lambda x}e^{i\int_{-\infty}^xu(t)~dt}$. This finishes the proof of \cref{eq: lim me infty}.

In order to obtain enough decay estimates of the scattering coefficients, we need to expand $m_1(\lambda+0i)$ and $m_e(\lambda+0i)$ by one more order. By \cref{eq: Tk Rk bound}, we have
\begin{align}
&~m_1(\lambda+0i) = (I-T_{\lambda+0i})^{-1}1 \notag\\
=&~ (I+R_{\lambda+0i})1 - (I+R_{\lambda+0i})\widetilde{T}_{\lambda+0i}(I+R_{\lambda+0i})1 + O\left(\frac1{\lambda}\right),
\end{align}
and
\begin{align}
&~m_e(\lambda+0i) = (I-T_{\lambda+0i})^{-1}\e \notag\\
=&~ (I+R_{\lambda+0i})\e - (I+R_{\lambda+0i})\widetilde{T}_{\lambda+0i}(I+R_{\lambda+0i})\e + O\left(\frac1{\lambda}\right).
\end{align}
By the definition of $\Gamma(\lambda)$ and $\beta(\lambda)$ given in \cref{def: Gamma} and \cref{def: beta}, we have
\begin{align}
\Gamma(\lambda) = &~1+i\int_{\real}u(x)e^{-i\lambda x}[(I+R_{\lambda+0i})\e](x)~dx \notag\\ 
&~- i\int_{\real}u(x)e^{-i\lambda x}[(I+R_{\lambda+0i})\widetilde T_{\lambda+0i}(I+R_{\lambda+0i})\e](x)~dx +O\left(\frac1{\lambda}\right),
\end{align}
and
\begin{align}
\beta(\lambda) = &~i\int_{\real}u(x)e^{-i\lambda x}[(I+R_{\lambda+0i})1](x)~dx \notag\\ 
&~- i\int_{\real}u(x)e^{-i\lambda x}[(I+R_{\lambda+0i})\widetilde T_{\lambda+0i}(I+R_{\lambda+0i})1](x)~dx +O\left(\frac1{\lambda}\right).
\end{align}
We first work on $\Gamma(\lambda)$. By \Cref{lem: int simplify}, 
\begin{align}
i\int_{\real}u(x)e^{-i\lambda x}[(I+R_{\lambda+0i})\e](x)~dx &= \int_\real iu(x) e^{i\int_x^\infty u(t)~dt}~dx\notag\\
&= -\int_\real \left(e^{i\int_x^\infty u(t)~dt}\right)_x~dx\notag\\
&= e^{i\int_\real u(t)~dt}-1,
\end{align}
and
\begin{align}
&~\int_{\real}u(x)e^{-i\lambda x}[(I+R_{\lambda+0i})\widetilde T_{\lambda+0i}(I+R_{\lambda+0i})\e](x)~dx\notag\\
=&~\int_{\real}u(x)e^{-i\lambda x}e^{i\int_x^\infty u(t)~dt}[\widetilde T_{\lambda+0i}(I+R_{\lambda+0i})\e](x)~dx,
\end{align}
which is bounded by $\|u\|_2\|\widetilde T_{\lambda+0i}(I+R_{\lambda+0i})\e\|_2$. By the Plancherel identity, 
\begin{align}
\|\widetilde T_{\lambda+0i}(I+R_{\lambda+0i})\e\|_2&\le C \bigg\|\frac{\chi_{\real^-}(\xi)}{\xi-\lambda}F(u(I+R_{\lambda+0i})\e)\bigg\|_2\notag\\
&\le \frac C{\lambda}\|F(u(I+R_{\lambda+0i})\e)\|_2\notag\\
&\le \frac C{\lambda}\|u(I+R_{\lambda+0i})\e\|_2 \le \frac C{\lambda}\|u\|_2.
\end{align}
Hence
\eqn
\Gamma(\lambda) = 1+e^{i\int_\real u(t)~dt}-1 + O\left(\frac1{\lambda}\right)=e^{i\int_\real u(t)~dt}+ O\left(\frac1{\lambda}\right).
\eeqn
This proves \cref{eq: lim Gamma infty}. The calculation of $\beta(\lambda)$ differs basically only in the main term
\eqn
\int_\real u(x)e^{-i\lambda x}[(I+R_{\lambda+0i})1](x)~dx=\int_\real u(x)e^{-i\lambda x}e^{i\int_x^\infty u(t)~dt}~dx.
\eeqn
The result \cref{eq: lim beta infty} follows from the fact that $u\in L^1\cap L^2$.
\end{proof}

Our next result shows that a little more information on the asymptotic behavior of $m_1(x,k)$ may be obtained by imposing slightly stronger regularity assumptions on $u$. For this result, $k$ is allowed to approach $\infty$ in a fixed Stolz angle away from the positive real line.

\begin{thm}\label{thm: lim m1 hot 1}
Let $s>\frac12$, and $u\in H^s_s(\real)$. Suppose there exists $\alpha\in (0,\frac\pi2)$ such that $|\text{Im }k|>(\tan \alpha)\text{Re }k$. Then there exists $\epsilon>0$ such that
\eqn\label{eq: lim m1 hot 1} 
m_1(x,k) = 1-\frac{C_+u(x)}{k}+O\left(\frac1{|k|^{1+\epsilon}}\right) \text{ as }k\to\infty.
\eeqn
Here the big $O$ notation holds in the sense of $L^{\infty}(\real)$.
\end{thm}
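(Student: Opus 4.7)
The plan is to exploit the Neumann series expansion supplied by Lemma~\ref{lem: inv forms}(a):
\[
m_1(x,k)=(I-T_k)^{-1}1 = 1+T_k 1(x)+\sum_{n=2}^{\infty}T_k^n 1(x),
\]
and to identify the leading term via Fourier analysis on $T_k 1$.

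First I would extract the first-order term. Since $\widehat{T_k 1}=\chi_{\real^+}\hat u/(\xi-k)$, the algebraic identity $\tfrac{1}{\xi-k} = -\tfrac{1}{k} + \tfrac{\xi}{k(\xi-k)}$ gives
\[
T_k 1(x) + \frac{C_+u(x)}{k} = \frac{1}{2\pi k}\int_0^{\infty}\frac{\xi\, e^{ix\xi}\,\hat u(\xi)}{\xi-k}\,d\xi.
\]
The Stolz angle hypothesis yields $|\xi-k|\ge c_\alpha(\xi+|k|)$ for all $\xi\ge 0$, so the right-hand side is dominated in absolute value by $C|k|^{-1}\int_0^{\infty}\xi|\hat u(\xi)|/(\xi+|k|)\,d\xi$. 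Since $u\in H^s_s$ implies $\hat u\in L^2_s$, Cauchy--Schwarz with weight $w^s$ gives
\[
\int_0^{\infty}\frac{\xi|\hat u(\xi)|}{\xi+|k|}\,d\xi\le \|\hat u\|_{L^2_s}\left(\int_0^{\infty}\frac{\xi^2\, w^{-2s}(\xi)}{(\xi+|k|)^2}\,d\xi\right)^{1/2}=O(|k|^{-\epsilon_0}),
\]
for some $\epsilon_0>0$: the substitution $\xi=|k|t$ and splitting at $t=1$ shows the weighted integral is $\lesssim |k|^{1-2s}$ for $\tfrac12<s<\tfrac32$ and $\lesssim |k|^{-2}$ for $s\ge \tfrac32$. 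Hence $T_k 1 = -C_+u/k + O(|k|^{-1-\epsilon_0})$ in $L^{\infty}(\real)$.

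Second I would control the tail. Sobolev embedding ($s>\tfrac12$) gives $\|C_+u\|_\infty\le C\|u\|_{H^s_s}<\infty$, so by the identity just proved, $\|T_k 1\|_\infty\le C|k|^{-1}$. Lemma~\ref{lem: inv forms}(a) furnishes the operator bound $\|T_k\|_{L^\infty\to L^\infty}\le C|k|^{-1/2}$, using the inclusion $L^\infty\subset L^{\infty}_{-(s-s_1)}$. Induction yields
\[
\|T_k^n 1\|_\infty \le \|T_k\|_{L^\infty\to L^\infty}^{n-1}\|T_k 1\|_\infty\le C^n\,|k|^{-(n+1)/2},\qquad n\ge 1,
\]
and summing from $n=2$ gives $\bigl\|\sum_{n\ge 2} T_k^n 1\bigr\|_\infty \le C|k|^{-3/2}$ for $|k|$ large. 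Combining the two steps yields the claim with $\epsilon=\min(\epsilon_0,\tfrac12)>0$.

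The main obstacle is the first step: getting a genuine $|k|^{-1-\epsilon}$ error rather than the $|k|^{-1}$ one would obtain from crude $L^2$ bounds on $G_k$. This is exactly where the stronger hypothesis $u\in H^s_s$ (as opposed to the $L^2_s$ assumption used throughout the earlier sections) enters decisively, through the weighted $L^2$ integrability of $\hat u$.
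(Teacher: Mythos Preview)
Your proof is correct and follows essentially the same approach as the paper: Neumann expansion via Lemma~\ref{lem: inv forms}(a), identification of $T_k1$ as $-C_+u/k$ plus a remainder, and control of the tail $\sum_{n\ge 2}T_k^n1$ by combining $\|T_k1\|_\infty\le C|k|^{-1}$ with the operator bound $\|T_k\|\le C|k|^{-1/2}$. The only cosmetic difference is in estimating the remainder integral $\frac{1}{2\pi k}\int_0^\infty \frac{\xi e^{ix\xi}\hat u(\xi)}{\xi-k}\,d\xi$: the paper writes $\xi=\xi^{1-\epsilon}\cdot\xi^{\epsilon}$, chooses $\epsilon<s-\tfrac12$ so that $\xi^{\epsilon}\hat u\in L^1$, and uses the pointwise bound $|\xi^{1-\epsilon}/(\xi-k)|\le C|k|^{-\epsilon}$, whereas you apply Cauchy--Schwarz with the weight $w^s$ directly; both yield $O(|k|^{-1-\epsilon})$ with $\epsilon$ depending on $s-\tfrac12$.
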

\begin{proof}
If $k$ is in the left half plane and $|k|$ is sufficiently large, we use \cref{eq: I-Tk inv 1} to get
\eqn
m_1(k)= (I-T_k)^{-1}1 = \sum_{n=0}^{\infty}T_k^n1.
\eeqn
Since $u\in H^s_s(\real)$, $w^{s}\hat u\in L^2$, and $\hat u\in L^1$. It follows that the $L^{\infty}$ norm of
\eqn
(T_k1)(x) = \frac1{2\pi}\int_0^{\infty}\frac{e^{ix\xi}}{\xi-k}\hat u(\xi)~d\xi
\eeqn
is bounded by $\frac C{|k|}$. Therefore by \cref{eq: Tk left plane bound},
\eqn
m_1(k) = 1 + T_k 1 + O\left(\frac1{|k|^{\frac32}}\right).
\eeqn
We now write $T_k1$ as
\begin{align}\label{eq: Tk1 1}
(T_k1)(x) &= -\frac{1}{2\pi k}\int_0^\infty e^{ix\xi}\hat u(\xi)~d\xi + \frac1{2\pi k}\int_0^\infty\frac{\xi e^{ix\xi}}{\xi-k}\hat u(\xi)~d\xi\notag\\
&= -\frac{C_+u(x)}{k} + \frac1{2\pi k}\int_0^{\infty}\frac{\xi^{1-\epsilon}e^{ix\xi}}{\xi-k}[\xi^{\epsilon}\hat u(\xi)]~d\xi.
\end{align}
We estimate the last integral as follows. Since $w^s\hat u \in L^2$, by choosing $\epsilon>0$ sufficiently small, we can make $\xi^{\epsilon}\hat u(\xi)\in L^1$. The integral is therefore bounded by
\eqn
C\bigg\|\frac{\chi_{\real^+}(\xi)\xi^{1-\epsilon}}{\xi-k}\bigg\|_{\infty}\le C\bigg\|\frac{\chi_{\real^+}(\xi)\xi^{1-\epsilon}}{\xi+|k|}\bigg\|_{\infty}\le \frac{C}{|k|^{\epsilon}}.
\eeqn
Thus $T_k1=-\frac{C_+u}{k}+O(\frac1{|k|^{1+\epsilon}})$, and the result follows. 
\end{proof}

Our last result exemplifies how fast decay of the scattering coefficients can be obtained when $u$ is assumed to be smooth with rapid decay.

\begin{thm}\label{thm: lim infty schwartz}
Suppose $u\in \mathcal{S}$, the Schwartz class of rapidly decaying functions, and $k\in (\comp\setminus [0,\infty))\cup (\real^+\pm 0i)$. Then there exists $k_0>0$ and $C>0$ such that  
\eqn\label{eq: lim m1 hot 2}
\bigg\|m_1(x,k)-1+\frac{C_+u(x)}{k}\bigg\|_{\infty}\le \frac C{|k|^2}
\eeqn
for $|k|>k_0$, 
and for every positive integer $N$, there exists $C_N>0$ such that
\eqn\label{eq: lim me hot 2}
\|m_e(x,\lambda+0i)-e^{i\lambda x}e^{i\int_{-\infty}^x u(t)~dt}\|_{\infty}\le \frac{C_N}{\lambda^N},
\eeqn
\eqn\label{eq: lim Gamma beta hot 2}
|\Gamma(\lambda)-e^{i\int_\real u(t)~dt}|\le \frac{C_N}{\lambda^N},\text{ and }|\beta(\lambda)|\le \frac{C_N}{\lambda^N},
\eeqn
for all $\lambda >k_0$.
\end{thm}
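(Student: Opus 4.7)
The three bounds will be proved by extending the asymptotic analyses of \Cref{thm: lim infty} and \Cref{thm: lim m1 hot 1} to one further order, exploiting the full Schwartz regularity of $u$ together with the two inversion formulas \cref{eq: I-Tk inv 1} and \cref{eq: I-Tk inv 2} from \Cref{lem: inv forms}, plus \Cref{lem: int simplify} for the $\beta$-integral.

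For \cref{eq: lim m1 hot 2}, when $k$ lies in a fixed Stolz angle away from $[0,\infty)$ I sharpen the argument of \Cref{thm: lim m1 hot 1}: iterating $\tfrac{1}{\xi-k}=-\tfrac{1}{k}-\tfrac{\xi}{k^2}+\tfrac{\xi^2}{k^2(\xi-k)}$ together with the Schwartz decay of $\hat u$ yields $T_k1=-C_+u/k+O(1/|k|^2)$, while the Fourier-side bound $\|T_kf\|_\infty\le C\|\widehat{uf}\|_1/|k|$ and Young's inequality give $\|T_k^n1\|_\infty\le C^n/|k|^n$, so the tail of the Neumann series \cref{eq: I-Tk inv 1} contributes $O(1/|k|^2)$. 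For $k=\lambda+i\mu$ with $\lambda$ large I use \cref{eq: I-Tk inv 2} instead. Integration by parts in $y$ inside the defining integral of $R_k1$, exploiting $\partial_ye^{ik(x-y)}=-ike^{ik(x-y)}$, yields $(I+R_k)1=1-u(x)/k+O(1/k^2)$; the same Stolz-type argument applied to $\widetilde T_k$ (whose multiplier $\chi_{\mathbb{R}^-}/(\xi-k)$ is bounded by $C/|k|$ on its support when $k\approx\mathbb R^+$) produces $\widetilde T_k1=-C_-u/k+O(1/k^2)$. Using the bound $\|(I+R_k)\widetilde T_k\|_{L^\infty\to L^\infty}\le C/\sqrt{|k|}$ from \cref{eq: Tk Rk bound} to control the $n\ge 2$ tail of \cref{eq: I-Tk inv 2}, the second term $-(I+R_k)\widetilde T_k(I+R_k)1$ evaluates to $C_-u/k+O(1/k^2)$; the identity $u=C_+u+C_-u$ supplies the cancellation that collapses $(I+R_k)1-(I+R_k)\widetilde T_k(I+R_k)1$ to $1-C_+u/k+O(1/k^2)$.

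For \cref{eq: lim me hot 2}, the exact identity $(I+R_{\lambda+0i})\e=e^{i\lambda x}v(x)$ with $v(x)=e^{i\int_{-\infty}^xu\,dt}$, derived inside the proof of \Cref{thm: lim infty}, reduces matters to estimating $\widetilde T_{\lambda+0i}(e^{i\lambda\cdot}v)$. Setting $g=uv\in\mathcal S$ and substituting $\eta=\xi-\lambda$,
\begin{equation*}
\widetilde T_{\lambda+0i}(e^{i\lambda\cdot}v)(x)=\frac{e^{i\lambda x}}{2\pi}\int_{-\infty}^{-\lambda}\frac{e^{ix\eta}\hat g(\eta)}{\eta}\,d\eta,
\end{equation*}
whose $L^\infty$ norm is $O_N(\lambda^{-N})$ for every $N$ by the Schwartz decay of $\hat g$ and the bound $|\eta|\ge\lambda$ on the domain. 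Geometric summability of \cref{eq: I-Tk inv 2} then yields \cref{eq: lim me hot 2}. Substituting this expansion into $\Gamma(\lambda)=1+i\int um_e e^{-i\lambda x}\,dx$ and using $iuv=v'$, so that $i\int uv\,dx$ telescopes to $v(\infty)-v(-\infty)=e^{i\int u\,dt}-1$, gives the $\Gamma$-part of \cref{eq: lim Gamma beta hot 2}.

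For the $\beta$-bound, applying \Cref{lem: int simplify} to the expansion of $m_1$ afforded by \cref{eq: I-Tk inv 2} gives
\begin{equation*}
\beta(\lambda)=\sum_{n\ge 0}(-1)^n i\int u(x)V(x)e^{-i\lambda x}H_n(x)\,dx, \quad V(x):=e^{i\int_x^\infty u\,dt},\quad H_n:=(\widetilde T_{\lambda+0i}(I+R_{\lambda+0i}))^n 1.
\end{equation*}
The $n=0$ term equals $i\widehat{uV}(\lambda)=O_N(\lambda^{-N})$ since $uV\in\mathcal S$. For $n\ge 1$ the crucial observation is that each $H_n$ has $\hat H_n$ supported in $(-\infty,0]$, the outermost factor of $\widetilde T_{\lambda+0i}$ cutting off positive frequencies. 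By the convolution theorem and Cauchy--Schwarz,
\begin{equation*}
\Big|\int uVe^{-i\lambda x}H_n\,dx\Big|=\frac{1}{2\pi}\Big|\int_{-\infty}^0\widehat{uV}(\lambda-\eta)\hat H_n(\eta)\,d\eta\Big|\le \frac{1}{2\pi}\|\widehat{uV}(\lambda-\cdot)\|_{L^2(\mathbb R^-)}\|\hat H_n\|_2,
\end{equation*}
where the first factor equals $(\int_\lambda^\infty|\widehat{uV}|^2)^{1/2}=O_N(\lambda^{-N})$ by Schwartz decay, and $\|\hat H_n\|_2=\|H_n\|_2\le(C/\sqrt\lambda)^n$ from the Neumann-type estimates using $\|\widetilde T_k\|_{L^\infty\to L^2}\le C/\sqrt{|k|}$. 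Summing the resulting geometric series produces the $\beta$-bound in \cref{eq: lim Gamma beta hot 2}. The principal technical difficulty is Part 1 near $k\in\mathbb R^+$, where one must track the expansions closely enough to see that the stray $-u/k$ from $(I+R_k)1$ and the $+C_-u/k$ from $-(I+R_k)\widetilde T_k(I+R_k)1$ combine into precisely $-C_+u/k$.
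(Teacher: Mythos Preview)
Your proposal is correct and follows essentially the same route as the paper: the same integration-by-parts expansion of $(I+R_k)1$, the same $u=C_+u+C_-u$ cancellation near $\real^+$, the same frequency-shift computation for $\widetilde T_\lambda(\e v)$, and the same use of \Cref{lem: int simplify} together with the negative-frequency support of $\widetilde T_\lambda$ for $\beta$ (you pair by Cauchy--Schwarz where the paper takes a sup, which is immaterial). One small looseness shared by both write-ups: the bare bound $\|(I+R_k)\widetilde T_k\|\le C/\sqrt{|k|}$ gives the $n\ge 2$ tail of \cref{eq: I-Tk inv 2} only as $O(1/|k|)$; to reach $O(1/|k|^2)$ one must also observe that the $n=2$ term is $(I+R_k)\widetilde T_k$ applied to $-C_-u/k+O(1/|k|^2)$ and use $uC_-u\in\mathcal S$ to get $\widetilde T_k(C_-u)=O(1/|k|)$, then push the crude geometric bound to $n\ge 4$.
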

\begin{proof}
The improvement from \cref{eq: lim m1 hot 1} to \cref{eq: lim m1 hot 2} is twofold: $\epsilon$ is improved to $1$, and the restriction on the Stolz angle is removed. We first assume $k$ is in the left half plane. The choice of $\epsilon$ in the proof of \Cref{thm: lim m1 hot 1} is used only to make $\xi^{\epsilon}\hat u(\xi)\in L^1$. It is clear that we may choose $\epsilon=1$ now that $u\in \mathcal{S}$.

To remove the restriction on the Stolz angle, let's assume $k$ is in the right half plane with $|k|$ sufficiently large. This time we use \cref{eq: I-Tk inv 2} to write
\eqn
m_1(k)= (I-T_k)^{-1}1 = \sum_{n=0}^{\infty}(-(I+R_k)\widetilde{T}_k)^n (I+R_k)1.
\eeqn
We again work only with the case $k=\lambda+i\mu$ with $\lambda>0$, $\mu\ge 0$ and compute
\begin{align}
&~[(I+R_k)1](x) \notag\\
= &~1+i\int_{-\infty}^x e^{ik(x-y)}e^{i\int_y^x u(t)~dt}u(y)~dy\notag\\
= &~ 1-\frac{u(x)}{k} +\frac1k\int_{-\infty}^x e^{ik(x-y)}\left(e^{i\int_y^x \int u(t)~dt}u(y)\right)_y~dy\notag\\
=&~1-\frac{u(x)}{k} +O\left(\frac1{|k|^2}\right).
\end{align}
Here we have used integration by parts to compute the integral and used it one more time to estimate the remainder. It follows that
\begin{align}
&~[\widetilde{T}_k(I+R_k)1](x) \notag\\
=&~ \left[\widetilde T_k \left(1-\frac u k\right) \right](x)+ O\left(\frac1{|k|^2}\right) \notag\\
=&~\frac1{2\pi}\int_{-\infty}^0 \frac{e^{ix\xi}}{\xi-k}F\left(u\left(1-\frac u k\right)\right)(\xi)~d\xi + O\left(\frac1{|k|^2}\right) \notag\\
=&~\frac1{2\pi}\int_{-\infty}^0 \frac{e^{ix\xi}}{\xi-k}\hat u(\xi)~d\xi + O\left(\frac1{|k|^2}\right)\notag\\
=&~ -\frac{C_-u(x)}k+ O\left(\frac1{|k|^2}\right).
\end{align}
Therefore 
\eqn
m_1(k) = 1-\frac u k +\frac{C_-u}k+O\left(\frac1{|k|^2}\right)=1-\frac{C_+u}k +O\left(\frac1{|k|^2}\right). 
\eeqn
This completes the proof of \cref{eq: lim m1 hot 2}.

Next, we use \cref{eq: I-Tk inv 2} to write
\eqn
m_e(\lambda+0i)= (I-T_{\lambda+0i})^{-1}\e = \sum_{n=0}^{\infty}(-(I+R_{\lambda+0i})\widetilde{T}_{\lambda})^n (I+R_{\lambda+0i})\e,
\eeqn
and recall from the proof of \Cref{thm: lim infty} that $[(I+R_{\lambda+0i})\e](x) = e^{i\lambda x}e^{i\int_{-\infty}^x u(t)~dt}$. Thus 
\begin{align}
&~[\widetilde T_{\lambda}(I+R_{\lambda+0i})\e](x)\notag\\
= &~\frac1{2\pi}\int_{-\infty}^0\frac{e^{ix\xi}}{\xi-\lambda}F\left(u(y)e^{i\lambda y}e^{i\int_{-\infty}^y u(t)~dt}\right)(\xi)~d\xi\notag\\
= &~\frac1{2\pi}\int_{-\infty}^0\frac{e^{ix\xi}}{\xi-\lambda}F\left(u(y)e^{i\int_{-\infty}^y u(t)~dt}\right)(\xi-\lambda)~d\xi\notag\\
= &~\frac1{2\pi}\int_{-\infty}^{-\lambda}\frac{e^{ix\xi}}{\xi}F\left(u(y)e^{i\int_{-\infty}^y u(t)~dt}\right)(\xi)~d\xi.
\end{align}
Since $F\left(u(y)e^{i\int_{-\infty}^y u(t)~dt}\right)$ is also in the Schwartz class, we have $\|\widetilde T_{\lambda}(I+R_{\lambda+0i})\e\|_{\infty}\le\frac{C_N}{\lambda^N}$, and \cref{eq: lim me hot 2} follows. The asymptotic bound on $\Gamma(\lambda)$ follows immediately from \cref{eq: lim me hot 2}. Finally, to find the bound on $\beta(\lambda)$, we write
\eqn\label{eq: beta as infinite sum}
\beta(\lambda) = \sum_{n=0}^{\infty}\int_\real u(x) e^{-i\lambda x}[(-(I+R_{\lambda+0i})\widetilde{T}_{\lambda})^n (I+R_{\lambda+0i})1](x)~dx.
\eeqn
By \Cref{lem: int simplify},
\begin{align}
&~\int_\real u(x) e^{-i\lambda x}[((I+R_{\lambda+0i})\widetilde{T}_{\lambda})\varphi](x)~dx\notag\\
=&~ \int_\real u(x) e^{-i\lambda x}e^{i\int_x^\infty u(t)~dt}[\widetilde{T}_{\lambda}\varphi](x)~dx\notag\\
=&~ F\left(u(x)e^{i\int_x^\infty u(t)~dt}[\widetilde{T}_{\lambda}\varphi](x)\right)(\lambda)\notag\\
= &~\frac1{2\pi}\left[F\left(u(x)e^{i\int_x^\infty u(t)~dt}\right)*F([\widetilde{T}_{\lambda}\varphi](x))\right](\lambda)\notag\\
= &~\frac1{2\pi}\int_{-\infty}^0 F\left(u(x)e^{i\int_x^\infty u(t)~dt}\right)(\lambda-\xi)\frac{1}{\xi-\lambda}\widehat{u\varphi}(\xi)~d\xi,
\end{align}
whose $L^{\infty}$ norm is bounded by 
\eqn
C\sup_{\xi\ge \lambda}\left|F\left(u(x)e^{i\int_x^\infty u(t)~dt}\right)(\xi)\right|\|u\varphi\|_2\le \frac{C_N}{\lambda^N}.
\eeqn
Taking $\varphi$ to be $((I+R_{\lambda+0i})\widetilde{T}_{\lambda})^{n-1} (I+R_{\lambda+0i})1$, we easily obtain $|\beta(\lambda)|\le \frac{C_N}{\lambda^N}$ from \cref{eq: beta as infinite sum}.
\end{proof}

\section{Time evolution of scattering data}\label{sec: time evo}

In this section, we present a formal derivation of the time evolution of the Jost functions and scattering coefficients given in \cite{fokas1983inverse}, assuming $u=u(x,t)$ is sufficiently smooth with sufficiently rapid decay, and evolves with the BO equation \cref{eq: BO}. We spend no effort in justifying the change of order of derivatives with asymptotic notations. The reason that we don't try to make the steps rigorous is as follows. If our goal is to construct solutions to the Cauchy problem of the BO equation using IST, the shorter path is to evolve the scattering data by the formulas obtained formally in this section, and prove that the solution constructed by the IST indeed solves the BO equation. Therefore, although it may be possible to prove the time evolution of scattering data using the $H^s$ solution to the BO equation constructed in the PDE literature, we do not pursue that path here.

The derivation is done in two steps. In the first step, we argue that the Jost functions $m_1(k)$, $m_e(\lambda\pm 0i)$, and the eigenfunctions $\phi_j$ defined in \Cref{sec: FA IST} satisfy the following evolution equations:
\begin{align}
\partial_t \phi_j &= B_u\phi_j, \label{eq: evo phi}\\
\partial_t m_1(k) &=B_um_1(k), \label{eq: evo m1}\\
\partial_t m_e(\lambda\pm 0i) &= B_um_e(\lambda\pm 0i)-i\lambda^2m_e(\lambda\pm 0i), \label{eq: evo me}
\end{align}
where $B_u$ is defined as \cref{def: Bu}.
In the second step, we will use \cref{eq: evo phi}, \cref{eq: evo m1}, \cref{eq: evo me} to show the following time evolution for the eigenvalues $\{\lambda_j\}_{j=1}^N$, phase constants $\{\gamma_j\}_{j=1}^N$, and scattering coefficients $\Gamma(\lambda)$, $\beta(\lambda)$:
\begin{align}
\partial_t \lambda_j &=0, \label{eq: evo lambda}\\
\partial_t \gamma_j &= 2\lambda_j, \label{eq: evo gamma}\\
\partial_t \Gamma(\lambda)&=0, \label{eq: evo Gamma}\\
\partial_t \beta(\lambda) &=i\lambda^2\beta(\lambda). \label{eq: evo beta}
\end{align}

We want to use the Lax equation $\partial_t L_u + [L_u,B_u]=0$ to derive \cref{eq: evo phi}, \cref{eq: evo m1}, \cref{eq: evo me}. However, as is pointed out in \Cref{sec: FA IST}, the equivalence of the Lax equation with the BO equation has only been derived if $L_u$ and $B_u$ are regarded as operators on $\hardy^+$. We prove in the following lemma that the eigenfunctions and Jost functions are in fact boundary values of bounded analytic functions on the upper half plane. This is enough to justify the Lax equation.

\begin{lem}\label{lem: hardy infty}
Let $\phi_j(x)$ be an eigenfunction of $L_u$ corresponding to a negative eigenvalue $\lambda_j$. Let $m_1(x,k)$, $m_e(x,\lambda\pm 0i)$ be given as in \Cref{lem: equiv int diff}, and satisfy either condition (a) or (b). Then $\phi_j, m_1(x,k),m_e(x,\lambda\pm 0i)\in \hardy^{\infty,+}$ for fixed $k$ and $\lambda$.
\end{lem}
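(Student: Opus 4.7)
The entire lemma reduces to the following Paley--Wiener principle: \emph{every $f\in L^\infty(\real)$ whose Fourier transform, interpreted as a tempered distribution, is supported in $[0,\infty)$ belongs to $\hardy^{\infty,+}$.} To see this, I extend $f$ to the upper half plane by Poisson integration, $F(x+iy)=(P_y*f)(x)$ with $P_y(t)=y/[\pi(y^2+t^2)]$. Then $\|P_y\|_1=1$ forces $\sup_{y>0}\|F(\cdot+iy)\|_\infty\le\|f\|_\infty$, and $F$ is harmonic. Since $\widehat{P_y}(\xi)=e^{-y|\xi|}$ coincides with $e^{-y\xi}$ on $\operatorname{supp}\widehat f\subset[0,\infty)$, a distributional computation shows $(\partial_x+i\partial_y)F=0$, so $F$ is holomorphic, and standard Poisson theory gives $F(\cdot+iy)\to f$ as $y\searrow 0$ in the weak-$*$ sense, identifying $f$ as the boundary value of a bounded holomorphic function.

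With this principle in hand, each of the three claims reduces to verifying $L^\infty$-regularity together with Fourier support in $[0,\infty)$. For $m_1(x,k)$: \Cref{lem: equiv int diff} supplies $m_1-1\in L^\infty(\real)$, and the Fourier-side identity
\[
\widehat{m_1-1}(\xi)=\frac{\chi_{\real^+}(\xi)}{\xi-k}\widehat{um_1}(\xi)
\]
(read with the $\mp i0$ prescription when $k=\lambda\pm 0i\in\real^+\pm 0i$, which only adds a Dirac mass at $\xi=\lambda>0$) places $\widehat{m_1-1}$ inside $[0,\infty)$. Hence $m_1-1\in\hardy^{\infty,+}$, and adding the constant $1\in\hardy^{\infty,+}$ yields $m_1\in\hardy^{\infty,+}$.

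For $m_e(x,\lambda\pm 0i)$: \cref{eq: me int} gives $m_e-\e(\lambda)\in L^\infty$ with Fourier transform $\chi_{\real^+}(\xi)(\xi-\lambda\mp i0)^{-1}\widehat{um_e}(\xi)$ supported in $[0,\infty)$, so $m_e-\e(\lambda)\in\hardy^{\infty,+}$; since $\lambda\ge 0$, $\e(\lambda)=e^{i\lambda x}$ is the boundary value of $e^{i\lambda z}$ with $|e^{i\lambda z}|=e^{-\lambda y}\le 1$, so $\e(\lambda)\in\hardy^{\infty,+}$, and the sum lies in $\hardy^{\infty,+}$. For $\phi_j$: by construction $\phi_j\in\hardy^+=\hardy^{2,+}$, so $\operatorname{supp}\widehat{\phi_j}\subset[0,\infty)$ is automatic; the integral form $\phi_j=G_{\lambda_j}*(u\phi_j)$ together with $\lambda_j<0$ (so that $G_{\lambda_j}\in L^{p'}(\real)$ for all $p'\in(1,\infty)$) and the boundedness of $u$ inherited from the standing $L^\infty\cap L^2_s$ hypothesis of \cite{wu2016simplicity} yield $\phi_j\in L^\infty(\real)$ via a bootstrap analogous to \Cref{lem: decay estimate}; the Paley--Wiener principle then places $\phi_j\in\hardy^{\infty,+}$.

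The main obstacle I anticipate is making the holomorphicity of the Poisson extension rigorous for general bounded $f$ with only distributional Fourier-support information. For $f\in L^2\cap L^\infty$ the statement $\widehat{F(\cdot+iy)}=e^{-y\xi}\widehat f$ is immediate from Plancherel; the general $L^\infty$ case is handled by smoothly truncating $\widehat f$ on compact subsets of $(0,\infty)$ to produce $L^2\cap L^\infty$ approximants, and passing to the weak-$*$ limit, where holomorphicity on compact subsets of the upper half plane is preserved by Morera's theorem.
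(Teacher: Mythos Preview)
Your approach is correct and genuinely different from the paper's. The paper proceeds case by case: for $k\in\comp\setminus[0,\infty)$ (and for $\phi_j$) it writes down the Fourier integral $\frac1{2\pi}\int_0^\infty\frac{e^{i\xi z}}{\xi-k}\widehat{um_1}(\xi)\,d\xi$ directly and checks it is bounded analytic, while for $k=\lambda\pm 0i$ it abandons the Fourier side and instead integrates the ODE $\frac1i\partial_x m_1-C_+(um_1)=\lambda(m_1-1)$ along a contour in the upper half plane, using that $C_+(um_1)\in\hardy^{p,+}$ already has an analytic extension, and then estimates $|F(x+iy)-m_1(x)|$ via the pointwise bound $|C_+(um_1)(x+is)|\le Cs^{-1/p}$. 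Your route replaces both cases by a single appeal to the Paley--Wiener characterization of $\hardy^{\infty,+}$: verify $L^\infty$ membership (already done in \Cref{lem: equiv int diff}) and Fourier support in $[0,\infty)$, then invoke the Poisson extension. This is cleaner and more conceptual; the paper's argument is more explicit and self-contained.

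Two small points worth tightening. First, for the boundary case your ``$\mp i0$ prescription plus a Dirac mass'' description of $\widehat{m_1-1}$ is a bit informal; it is simpler to read off the support directly from the distributional identity $(\xi-\lambda)\widehat{m_1-1}=\chi_{\real^+}\widehat{um_1}$, which follows from the differential form \cref{eq: m1 diff} and immediately forces $\widehat{m_1-1}=0$ on $(-\infty,0)$ since $\xi-\lambda$ is nonvanishing there. Second, your approximation argument for the Paley--Wiener principle (truncating $\hat f$ on compact subsets of $(0,\infty)$) does not obviously produce $L^2\cap L^\infty$ functions with uniformly bounded $L^\infty$ norm, and the passage to the limit at $\xi=0$ needs care; since the principle itself is classical it would be safer simply to cite it.
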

\begin{proof}
We first work on $m_1(x,k)$. If $k$ is not on $\real^+\pm 0i$, we can repeat the calculation in \Cref{lem: equiv int diff} to get \cref{eq: hat m1 hardy}, or
\eqn
m_1(x)-1 = \frac1{2\pi}\int_0^{\infty}\frac{e^{i\xi x}}{\xi-k}\widehat{um_1}(\xi)~d\xi.
\eeqn
For $z=x+iy$ with $y>0$, define
\eqn
F(z) = \frac1{2\pi}\int_0^{\infty}\frac{e^{i\xi z}}{\xi-k}\widehat{um_1}(\xi)~d\xi = \frac1{2\pi}\int_0^{\infty}\frac{e^{i\xi x}e^{-y\xi}}{\xi-k}\widehat{um_1}(\xi)~d\xi .
\eeqn
Since $\widehat{um_1}\in L^{q'}$ for some $2\le q'<\infty$, $F(z)$ is obviously bounded and analytic in the upper half plane. Furthermore, $F(x+iy)$ converges uniformly to $m_1(x)-1$ as $y\searrow 0$. This shows $m_1(x,k)\in \hardy^{p,+}$. The eigenfunction $\phi_j(x)$ can be treated in a similar way.

Next we work on the cases $k=\lambda\pm 0i$. We provide arguments only for $m_1(x,\lambda+0i)$. The other functions can be treated similarly. We abbreviated $m_1(x,\lambda+0i)$ simply as $m_1$. Since
\eqn
\frac1i\partial_x m_1-C_+(um_1) = \lambda(m_1-1),
\eeqn
and $m_1(x)-1\to 0$ as $x\to -\infty$, we get
\eqn
m_1(x) = 1 + i\int_{-\infty}^x e^{i\lambda(x-s)}C_+(um_1)(s)~ds.
\eeqn
For $z=x+iy$ with $y>0$, define
\eqn\label{def: F}
F(z) = m_1(0) + i\int_{0}^z e^{i\lambda(z-s)}C_+(um_1)(s)~ds.
\eeqn
Here the integral is taken along any smooth contour in the upper half plane with end points at $0$ and $z$. We have used the analytic extension of $C_+(um_1)$ into the upper half plane as $C_+(um_1)\in \hardy^{p,+}$. $F(z)$ is obviously analytic in the upper half plane. We now estimate $F(x+iy)-m_1(x)$. To do that, we take the contour of integration in \cref{def: F} to be the straight line from $0$ to $x$, followed by the straight line from $x$ to $x+iy$. It follows that
\eqn
F(x+iy)-m_1(x) = i\int_{0}^{y}e^{-\lambda(y-s)}C_+(um_1)(x+is)~ds.
\eeqn
Using the elementary estimate on $\hardy^{p,+}$ functions (see Lemma 2.12 in \cite{stein2016introduction})
\eqn
|C_+(um_1)(x+is)|\le Cs^{-\frac1p}\|C_+(um_1)\|_{\hardy^{p,+}},
\eeqn
we get
\begin{align}
|F(x+iy)-m_1(x)|&\le C\|C_+(um_1)\|_{\hardy^{p,+}}\int_0^y e^{-\lambda(y-s)}s^{-\frac{1}{p}}~ds\notag\\
&= C\|C_+(um_1)\|_{\hardy^{p,+}}\int_0^1 y^{1-\frac1p}e^{-\lambda y(1-s)}s^{-\frac{1}{p}}~ds
\end{align}
For $\lambda>0$, $y>0$, $p>1$, and $0<s<1$, we have the elementary estimate
\eqn
y^{1-\frac1p}e^{-\lambda y(1-s)}\le \min\left(y^{1-\frac1p}, \left[\frac{1-\frac1p}{\lambda(1-s)}\right]^{1-\frac1p}e^{-(1-\frac1p)}\right).
\eeqn
Therefore for some constant $C=C(u,m_1,p,\lambda)>0$
\eqn
|F(x+iy)-m_1(x)| \le C\min\left(1,y^{1-\frac1p}\right).
\eeqn
This implies that $F(z)$ is bounded and $F(x+iy)$ converges to $m_1(x)$ uniformly as $y\searrow 0$. In other words, $m_1\in \hardy^{\infty,+}$.
\end{proof}

Next, we show that $(\partial_t L_u + [L_u,B_u])\varphi=0$ if $\varphi\in \hardy^{\infty,+}$ and is suitably smooth. In fact, repeating the derivation of the Lax pair in \cite{wu2016simplicity} using the modified $L_u$ and $B_u$ given in \cref{def: Lu} and \cref{def: Bu} provides
\begin{equation}
[L_u,B_u] \varphi = \frac{2}{i}(C_+u_{xx})\varphi-\frac{1}{i}C_+(u_{xx}\varphi)-2C_+(u_xu\varphi).
\end{equation}
Using the BO equation \cref{eq: BO}, we get 
\begin{equation}
(\partial_t L_u)\varphi = -C_+(u_t\varphi) = 2C_+(uu_x\varphi)+\frac{1}{i}C_+([(u_{xx}-2(C_+u_{xx})]\varphi)
\end{equation}
Hence
\begin{align}\label{eq: modified lax}
(\partial_t L_u+[L_u,B_u])\varphi &= \frac{2}{i}(C_+u_{xx})\varphi- \frac2i C_+((C_+u_{xx})\varphi) \notag\\
&= \frac{2}{i}C_-\left[(C_+u_{xx})\varphi\right].
\end{align}
Since $\varphi\in \hardy^{\infty,+}$, we get $(C_+u_{xx})\varphi \in \hardy^+$, and $C_-\left[(C_+u_{xx})\varphi\right]=0$. Thus we may use the Lax equation on all eigenfunctions $\phi_j$ and Jost functions $m_1$ and $m_e$, by \Cref{lem: hardy infty}. 

The standard argument of a Lax pair shows that all eigenvalues $\{\lambda_j\}_{j=1}^N$ do not change with time. We take the time derivative of $L_u\phi_j = \lambda_j\phi_j$ to get
\eqn\label{eq: phi evo 1}
(\partial_t L_u)\phi_j + L_u(\partial_t \phi_j) = \lambda_j \partial_t\phi_j.
\eeqn
Using the Lax equation $(\partial_t L_u+[L_u,B_u])\phi_j=0$, \cref{eq: phi evo 1} becomes
\eqn
(L_u-\lambda_j)(\partial_t \phi_j - B_u\phi_j)=0.
\eeqn
In other words, $\partial_t \phi_j - B_u\phi_j$ is an eigenfunction corresponding to $\lambda_j$. By the simplicity of the eigenvalues proven in \cite{wu2016simplicity}, $\partial_t \phi_j - B_u\phi_j$ is a multiple of $\phi_j$. To find out the multiplicity constant, we compare the asymptotics when $x\to\pm\infty$. By the normalization used in \cite{wu2016simplicity}, $\phi_j(x) \sim \frac1x$ as $x\to\pm\infty$. On the other hand, we argue that
\eqn
\partial_t\phi_j - B_u\phi_j = \partial_t\phi_j - \frac1i\partial_x^2\phi_{j}-2[(C_+u_x)\phi_j-C_+((u\phi_j)_x)]=o\left(\frac1x\right),
\eeqn
which implies \cref{eq: evo phi} as a consequence.
In fact, $\partial_t\phi_j - \frac1i\partial_x^2\phi_{j}=o\left(\frac1x\right)$ if we formally exchange derivatives with asymptotics. $xC_+u_x \to 0$ as $x\to\pm \infty$ because
\eqn
F(xC_+u_x) (\xi)= -\partial_\xi(\chi_{\real^+}\xi\hat{u})=-\chi_{\real^+}\hat u + i\chi_{\real^+}\xi \widehat{xu}\in L^1.
\eeqn
For a similar reason $xC_+((u\phi_j)_x)\to 0$ as $x\to \pm\infty$.

We can show \cref{eq: evo m1} and \cref{eq: evo me} similarly. A few differences in the arguments are noted: the step where we used simplicity of eigenvalues is now replaced by uniqueness of Jost solutions; $\partial_t m_e(\lambda \pm 0i)-B_um_e(\lambda \pm 0i)\sim -i\lambda^2e^{i\lambda x}$ as $x\to \mp\infty$.

We now derive the time evolution of the scattering coefficients, starting with \cref{eq: evo gamma}. Taking the time derivative of \cref{eq: Laurent}, we obtain
\eqn\label{eq: evo gamma 1}
\partial_t m_1 = -\frac{i}{k-\lambda_j}\partial_t \phi_j + (\partial_t\gamma_j)\phi_j + (x+\gamma_j)\partial_t\phi_j + (k-\lambda_j)\partial_t h(k,\lambda_j).
\eeqn
Letting $B_u$ act on \cref{eq: Laurent},  we obtain
\eqn\label{eq: evo gamma 2}
B_u m_1 = -\frac{i}{k-\lambda_j}B_u \phi_j  + B_u[(x+\gamma_j)\phi_j ]+ (k-\lambda_j)B_u h(k,\lambda_j).
\eeqn
Take the difference of \cref{eq: evo gamma 1} with \cref{eq: evo gamma 2}, use \cref{eq: evo phi}, \cref{eq: evo m1}, and evaluate at $k=\lambda_j$ to get
\eqn
(\partial_t\gamma_j)\phi_j + [x+\gamma_j,B_u]\phi_j =0.
\eeqn
We compute the commutator term and get
\begin{align}
[x+\gamma_j,B_u]\phi_j &= -\frac2i\partial_x\phi_j + 2C_+(u\phi_j) -2[xC_+(u\phi_j)_x-C_+(x(u\phi_j)_x)]\notag\\
&=-\frac2i\partial_x\phi_j + 2C_+(u\phi_j)\notag\\
&=-2L_u\phi_j=-2\lambda_j\phi_j.
\end{align}
The terms in the square brackets vanish as is easily seen by taking its Fourier transform. It follows that
\eqn
(\partial_t\gamma_j-2\lambda_j)\phi_j=0,
\eeqn
from which we get \cref{eq: evo gamma}. To obtain \cref{eq: evo Gamma}, we take the time derivative of \cref{eq: me pm rel} and also act on it by $B_u$. We get
\begin{align}
\partial_tm_e(\lambda+0i)&=\Gamma\partial_t m_e(\lambda-0i)+(\partial_t\Gamma )m_e(\lambda-0i),\\
B_u m_e(\lambda+0i)&=\Gamma B_u  m_e(\lambda-0i).
\end{align}
Take the difference and use \cref{eq: evo me} to get
\eqn
-i\lambda^2 m_e(\lambda+0i) = (\partial_t\Gamma-i\lambda^2\Gamma)m_e(\lambda-0i).
\eeqn
Now use \cref{eq: me pm rel} again to get \cref{eq: evo Gamma}. Finally to obtain \cref{eq: evo beta}, we perform a similar calculation using \cref{eq: m1 jump}. We first get
\begin{align}
\partial_tm_1(\lambda+0i) - \partial_tm_1(\lambda-0i) &= \beta\partial_t m_e(\lambda-0i) + (\partial_t\beta)m_e(\lambda-0i),\\
B_u m_1(\lambda+0i) - B_u m_1(\lambda-0i) &= \beta B_um_e(\lambda-0i).
\end{align}
Take the difference and use \cref{eq: evo m1} and \cref{eq: evo me} to get
\eqn
(\partial_t\beta -i\lambda^2\beta) m_e(\lambda-0i)=0,
\eeqn
from which \cref{eq: evo beta} follows.

\bibliographystyle{siamplain}
\bibliography{BObiblio}

\end{document}